\documentclass[11pt]{amsart}

\usepackage{amssymb, amsthm,amstext,amsfonts,amsmath,graphics,lipsum}
\usepackage[normalem]{ulem}
\usepackage{tikz-cd} 
 \usepackage{float}
 \usepackage{graphicx}
 \usepackage{color}
 \usepackage{transparent}
\usepackage[margin=1.25in]{geometry}
\usepackage{marginnote}
\usepackage{url}

\usepackage[latin1]{inputenc}
\usepackage[all]{xy}

\usepackage{tikz}
\usetikzlibrary{matrix}

\makeatletter

\newtheorem*{rep@theorem}{\rep@title}
\newcommand{\newreptheorem}[2]{%
\newenvironment{rep#1}[1]{%
 \def\rep@title{#2 \ref{##1}}%
 \begin{rep@theorem}}%
 {\end{rep@theorem}}}

\makeatother
\newenvironment{dimoclaim}{\emph{Proof of Claim:}\;}{\hfill$\square$}

\newtheorem{theorem}{Theorem}
\newtheorem*{theorem*}{Theorem}
\newreptheorem{theorem}{Theorem}
\numberwithin{theorem}{section}
\newtheorem{theorem-definition}[theorem]{Theorem-Definition}
\newtheorem{proposition}[theorem]{Proposition}
\newtheorem*{proposition*}{Proposition}

\newcommand{\bpfc}{\begin{dimoclaim}}
\newcommand{\epfc}{\end{dimoclaim}}

\newtheorem*{acknowledgements*}{Acknowledgements}

\newtheorem{corollary}[theorem]{Corollary}
\newtheorem*{corollary*}{Corollary}
\newreptheorem{corollary}{Corollary}
\newtheorem{lemma}[theorem]{Lemma}
\newtheorem*{lemma*}{Lemma}

\theoremstyle{definition}
\newtheorem{definition}[theorem]{Definition}
\newtheorem*{definition*}{Definition}

\newtheorem*{claim*}{Claim}
\newtheorem{note}[theorem]{Remark}
\newtheorem*{note*}{Remark}
\newtheorem{question}[theorem]{Question}
\newtheorem{remark}[theorem]{Remark}
\theoremstyle{definition} 
\theoremstyle{remark}
\numberwithin{equation}{section}
\DeclareMathOperator{\Vol}{Vol}

\DeclareMathOperator{\Isom}{Isom}

\DeclareMathOperator{\SL}{\widetilde{SL_2}}
 %rank

\newcommand{\Z}{\mathbb{Z}}

\newcommand{\fun}[3]{#1 \colon #2 \to #3}

 %symplectic forms
\newcommand{\set}[1]{\left\{#1\right\}}					%set, curly brackets

\DeclareMathOperator{\eqdef}{\doteq\,} %definition

\renewcommand{\tilde}{\widetilde}
\renewcommand{\emptyset}{\varnothing}

\makeatletter
\newcommand{\tpitchfork}{%
  \vbox{
    \baselineskip\z@skip
    \lineskip-.52ex
    \lineskiplimit\maxdimen
    \m@th
    \ialign{##\crcr\hidewidth\smash{$-$}\hidewidth\crcr$\pitchfork$\crcr}
  }%
}
\makeatother

%\reversemarginpar

\usepackage{xcolor} 
 \usepackage{tcolorbox}
\newtcbox{\hl}[1][red]{on line, arc=7pt,colback=#1!10!white,colframe=#1!50!black,
  before upper={\rule[-3pt]{0pt}{10pt}},boxrule=1pt, boxsep=0pt,left=6pt,
  right=6pt,top=2pt,bottom=2pt}

 \usepackage[numbers]{natbib}

%ACCENTS
\DeclareFontFamily{U}{mathx}{\hyphenchar\font45}
\DeclareFontShape{U}{mathx}{m}{n}{<-> mathx10}{}
\DeclareSymbolFont{mathx}{U}{mathx}{m}{n}
\DeclareMathAccent{\widebar}{0}{mathx}{"73}
  \renewcommand{\tilde}{\widetilde}
  \renewcommand{\bar}{\widebar}

 \newtheorem*{Theorem*}{Theorem}

\setlength{\belowcaptionskip}{-20pt}

	\setlength{\marginparsep}{-0.1cm}

	 \begin{document}
	 
\title[Hyperbolicity of link complements in Seifert-fibered spaces ]{Hyperbolicity of link complements in Seifert-fibered spaces}
\author[T. Cremaschi and J. A. Rodriguez-Migueles ]{TOMMASO CREMASCHI  AND JOSE ANDRES RODRIGUEZ-MIGUELES}
\thanks{The first author gratefully acknowledges support from the U.S. National Science Foundation grants DMS 1107452, 1107263, 1107367 "RNMS: Geometric structures And Representation varieties" (the GEAR Network) and also from the grant DMS-1564410: Geometric Structures on Higher Teichm\"uller Spaces. The second author was generously supported by the Academy of Finland project \# 297258 "Topological Geometric Function Theory"}

\maketitle
\paragraph*{\textbf{Abstract:}} Let $\bar\gamma$ be a link in a Seifert-fibered space $M$ over a hyperbolic $2$-orbifold $\mathcal O$ that projects injectively to a filling multi-curve of closed geodesics $\gamma$ in $\mathcal O.$ We prove that the complement $M_{\bar\gamma}$ of $\bar\gamma$ in $M$ admits a hyperbolic structure of finite volume and give combinatorial bounds of its volume.

\section{Introduction}
 Let $\Sigma$ be a hyperbolic surface of finite type. In the projective unit tangent bundle $PT^1(\Sigma)$ there is a very special family of links $\widehat\gamma\eqdef(\gamma,\dot\gamma)$ coming from canonical lifts of a geodesic multi-curve $\gamma$ in $\Sigma$. These links correspond to the image under the map ${T^1\Sigma}\rightarrow{PT^1(\Sigma})$ of a collection of periodic orbits of the geodesic flow. Foulon and Hasselblatt  \cite{FH13} gave a topological criterium, depending only on the immersion of $\gamma$ in $\Sigma$, that guarantees the existence of a complete hyperbolic metric of finite volume in the canonical lift complement of $\gamma$ in $PT^1(\Sigma)$.

\begin{theorem}[Foulon-Hasselblatt,\cite{FH13}]\label{FH}
Let $\gamma$ be a closed geodesic on a hyperbolic surface $\Sigma.$ Then, the complement of the canonical lift admits a finite volume complete hyperbolic structure  if and only if $\gamma$ is filling.
\end{theorem}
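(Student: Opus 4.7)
I would deduce the theorem from Thurston's hyperbolization theorem applied to $M_{\widehat\gamma} := PT^1(\Sigma)\setminus\widehat\gamma$, verifying that this manifold is irreducible, has incompressible boundary, is atoroidal, and is not Seifert-fibered.

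For the \emph{necessity} direction, if $\gamma$ fails to fill $\Sigma$, then some component of $\Sigma\setminus\gamma$ carries an essential simple closed curve $\alpha$. Under the circle-bundle projection $\pi\colon PT^1(\Sigma)\to\Sigma$, the preimage $\pi^{-1}(\alpha)$ is an incompressible torus in $PT^1(\Sigma)$ that is disjoint from $\widehat\gamma$ and non-peripheral in $M_{\widehat\gamma}$; its presence obstructs any finite-volume hyperbolic structure.

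For \emph{sufficiency}, assume $\gamma$ fills $\Sigma$. The space $PT^1(\Sigma)$ is Seifert-fibered over a hyperbolic $2$-orbifold and therefore irreducible. Each component of $\widehat\gamma$ is a closed orbit of the geodesic flow, hence represents an infinite-order element of $\pi_1(PT^1(\Sigma))$; this keeps $M_{\widehat\gamma}$ irreducible and its peripheral tori incompressible. For atoroidality I would invoke the vertical/horizontal dichotomy for incompressible surfaces in Seifert-fibered spaces: a horizontal incompressible torus is excluded because the base has negative Euler characteristic, so any essential torus $T\subset M_{\widehat\gamma}$ must be vertical, that is, $T=\pi^{-1}(\alpha)$ for some essential simple closed curve $\alpha\subset\Sigma$. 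The filling hypothesis then forces $\alpha\cap\gamma\neq\emptyset$ and hence $T\cap\widehat\gamma\neq\emptyset$, a contradiction. Finally, $M_{\widehat\gamma}$ admits no Seifert structure because $\widehat\gamma$ is transverse to the canonical (essentially unique) Seifert fibration on $PT^1(\Sigma)$, so removing it destroys that fibration.

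The \emph{main obstacle} is the atoroidal step, together with the parallel analysis ruling out essential annuli, which would arise from vertical annuli over arcs in $\Sigma\setminus\gamma$ joining $\gamma$ to itself. One must convert the geometric filling hypothesis into the topological nonexistence of essential vertical surfaces in the complement, while handling exceptional fibers and, when $\Sigma$ is not closed, the cusps of $PT^1(\Sigma)$ lying over the ends of $\Sigma$.
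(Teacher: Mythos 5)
There is a genuine gap in your sufficiency argument, and it sits exactly at the point that is the hard part of this theorem. The vertical/horizontal dichotomy classifies surfaces that are \emph{incompressible in the ambient Seifert-fibered space} $PT^1(\Sigma)$. An essential torus $T$ in the complement $M_{\widehat\gamma}$ need not be of this kind: $\pi_1(T)$ can have rank $0$ or $1$ in $\pi_1(PT^1(\Sigma))$, i.e.\ $T$ can be null-homotopic in $PT^1(\Sigma)$, or compressible there and bound a solid torus containing a component of $\widehat\gamma$ in a possibly knotted way. For such tori the dichotomy says nothing, so your step ``any essential torus in $M_{\widehat\gamma}$ must be vertical'' is unjustified. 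These are precisely the cases that require the real work: in the paper they are handled by showing that the fundamental group of the universal cover minus all lifts of the link is free (Theorem \ref{uc}, via a simple triangulation of $\Sigma$ adapted to $\gamma$, a decomposition of $\widetilde M$ into thick cylinders, and iterated Van Kampen arguments), which kills the rank-$0$ tori since a free group contains no $\Z^2$, and by the resulting unknottedness of each lifted component (Corollary \ref{unk}), which forces a rank-$1$ torus to be boundary-parallel. In Foulon--Hasselblatt's original proof the same cases are dealt with using that the geodesic flow is product covered; the introduction of this paper explains that supplying this argument for general lifts is its main point. Only the rank-$2$ case coincides with your vertical/horizontal argument, and that is the easy case.

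Two smaller points. Your necessity direction is essentially the standard one and is fine. But your final claim that $M_{\widehat\gamma}$ is not Seifert-fibered ``because $\widehat\gamma$ is transverse to the canonical fibration'' is not an argument: the complement could a priori carry a different Seifert fibration, and one must exclude this by other means (e.g.\ via geometrization once irreducibility, infinite $\pi_1$ and atoroidality are established, as in the proof of Theorem \ref{sfsurface}, or by a direct $\pi_1$ argument), not by appealing to the fibration of the ambient manifold.
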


In \cite{FH13} the previous theorem was stated in a more general setting. The authors considered any embedded lift $\bar\gamma$ in the unit tangent bundle of the hyperbolic surface as long as the projection was injective outside the double points of the closed geodesic $\gamma.$ 
After reading their proof carefully, we noticed that an argument relative to the atoridality of these knot complements was only stated for the particular case of knots coming from periodic orbits of the geodesic flow; on the other hand, the arguments for the other cases worked in greater generality.  

This paper aims to prove the missing argument for the atoroidality of these link complements. This question was posed in a beautiful blog-post of Calegari \cite{Cal} where he gives a geometric proof of Theorem \ref{FH}. We also extend results of the second author from the unit tangent bundle to this setting. Moreover, we  give sequences of closed filling geodesics $\set{\gamma_n}_{n\in\mathbb N}$ in $\Sigma$ and topological lifts $\set{\bar\gamma_n}_{n\in\mathbb N}$ in $PT^1(\Sigma)$ whose associated knot complement volume is bounded linearly in terms of the self-intersection number of the closed geodesic.

\vskip 0.2cm

One of the steps of the proof of the hyperbolicity of $M_{\widehat\gamma}\eqdef PT^1(\Sigma) \setminus\widehat\gamma$ is to show that no essential torus $T\subset M_{\bar\gamma}$ is null-homotopic in $ PT^1(\Sigma) $. To do so, the authors of \cite{FH13} argue that since the geodesic flow is product covered in the universal cover $\widetilde{ PT^1(\Sigma) }$ the complement of all the lifts  $\set{\tilde{\widehat\gamma}}$ of $\widehat\gamma$ is homeomorphic to $\left(\mathbb R^2\setminus X\right)\times\mathbb R$, for $X$ a discrete set. Since $\pi_1\left(\left(\mathbb R^2\setminus X\right)\times\mathbb R\right)$ is free and the essential torus $T$ lifts to $\widetilde{ PT^1(\Sigma) }\setminus\set{\tilde{\widehat\gamma}}$ we reach a contradiction. This is because a free group does not contain any $\Z^2$ subgroup. To avoid using the geodesic flow we will directly show that $\pi_1(\widetilde{ PT^1(\Sigma) }\setminus \set{\tilde{\bar\gamma}})$ is free for any lift $\bar\gamma$ in $PT^1(\Sigma)$ of a geodesic multi-curve on $\Sigma$:

\begin{theorem}\label{uc}Let $M$ be a Seifert-fibered space over a hyperbolic surface $\Sigma$. Let $\bar\gamma$ be a link in $M$ projecting injectively to a filling multi-curve $\gamma\subset \Sigma$ of closed geodesics. Let $q: \widetilde M \rightarrow M$ the universal covering map of $M$ and $\set{\tilde{\bar\gamma}}$ the total preimage of the link $\overline\gamma$ under $q.$ Then, the group $\pi_1\left(\widetilde M\setminus \set{\tilde{\bar\gamma}}\right)$ is free.
\end{theorem}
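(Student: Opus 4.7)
The plan is to use the trivial $\mathbb R$-bundle structure on $\widetilde M$ to exhibit $\widetilde M\setminus\{\tilde{\bar\gamma}\}$ as an ascending union of compact handlebodies, from which freeness of its fundamental group will follow.

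First I would fix a topological identification $\widetilde M \cong \mathbb H^2 \times \mathbb R$ in which the lift of the Seifert fibration is the first-factor projection $\pi : \widetilde M \to \mathbb H^2$; such a trivialization exists because any $\mathbb R$-bundle over a contractible base is trivial. Since $\bar\gamma$ projects injectively onto $\gamma$, every component $\tilde{\bar\gamma}_\alpha$ of $\{\tilde{\bar\gamma}\}$ is mapped by $\pi$ homeomorphically onto a complete geodesic $\tilde\gamma_\alpha\subset\mathbb H^2$, and is thus the graph of a continuous function $f_\alpha : \tilde\gamma_\alpha\to\mathbb R$. Disjointness of the components forces $f_\alpha(p)\neq f_\beta(p)$ at every transverse intersection $p\in\tilde\gamma_\alpha\cap\tilde\gamma_\beta$. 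I would then fix a nested exhaustion $D_1\subset D_2\subset\cdots$ of $\mathbb H^2$ by compact convex disks and set $K_n := D_n\times[-n,n]$; each $K_n$ is a closed $3$-ball and $\bigcup K_n=\widetilde M$. By local finiteness of the lifts of a geodesic multi-curve, the tangle $T_n := \{\tilde{\bar\gamma}\}\cap K_n$ is a finite disjoint union of properly embedded arcs with all endpoints on $\partial K_n$.

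The main technical step is to show that each $T_n$ is a \emph{trivial} tangle in $K_n$, i.e.\ that the exterior $E_n := K_n\setminus\mathrm{int}\,N(T_n)$ of a regular neighbourhood is a handlebody. For each arc of $T_n$, which is the graph of a height function over a chord $\alpha_\alpha \subset D_n$ of $\tilde\gamma_\alpha$, consider the ``upward'' spanning disk $\Delta_\alpha := \{(x,t) : x\in\alpha_\alpha,\ f_\alpha(x)\leq t\leq n\}$, cobounded by the arc and an arc on the top face $D_n\times\{n\}$. Two disks $\Delta_\alpha,\Delta_\beta$ can only intersect above a crossing $p\in\alpha_\alpha\cap\alpha_\beta$, and $\Delta_\alpha$ can only meet $\tilde{\bar\gamma}_\beta$ at the point $(p,f_\beta(p))$ and only when $f_\beta(p)>f_\alpha(p)$. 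Since there are only finitely many such crossings in $D_n$, one perturbs each $\Delta_\alpha$ locally, bending it sideways in the direction normal to $\alpha_\alpha$ inside $\mathbb H^2$, so as to slip past each forbidden point $(p,f_\beta(p))$; these local modifications can be carried out simultaneously to produce a pairwise disjoint family of spanning disks whose interiors are disjoint from $T_n$. Such a family witnesses $T_n$ as a trivial tangle, and $E_n$ as a handlebody of finite genus whose fundamental group is free on the meridians of the arcs of $T_n$.

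Finally, the inclusions $E_1\subset E_2\subset\cdots$ exhaust $\widetilde M\setminus\{\tilde{\bar\gamma}\}$, and each inclusion $E_n\hookrightarrow E_{n+1}$ sends meridians of arcs of $T_n$ to the corresponding meridians in the generating set of $\pi_1(E_{n+1})$, realising $\pi_1(E_n)$ as a free factor of $\pi_1(E_{n+1})$. Hence $\pi_1(\widetilde M\setminus\{\tilde{\bar\gamma}\}) = \varinjlim_n \pi_1(E_n)$ is a directed colimit of finitely generated free groups under free-factor inclusions, and is therefore free. The principal obstacle is the trivial-tangle claim for $T_n$: while the idea of pushing each arc up to the top face by vertical flow is intuitive, verifying that the finite family of perturbed spanning disks can be chosen pairwise disjoint at every crossing simultaneously requires a careful local construction which exploits that, at every crossing $p$, the two height values $f_\alpha(p),f_\beta(p)$ are distinct.
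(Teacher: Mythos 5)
Your overall strategy (trivialize $\widetilde M\cong\mathbb H^2\times\mathbb R$, exhaust by balls, show each finite sub-tangle has handlebody exterior with meridian basis, pass to the direct limit) is genuinely different from the paper's route, which triangulates $\Sigma$ adapted to $\gamma$, decomposes $\widetilde M$ into lifted solid-torus pieces, gets freeness piecewise from Wirtinger presentations of one-crossing string diagrams, and then runs a van Kampen induction over an exhaustion. The limit step at the end of your argument is fine at the same level of rigor as the paper's (one small correction: two arcs of $T_n$ can be sub-arcs of a single arc of $T_{n+1}$, so the inclusion $\pi_1(E_n)\to\pi_1(E_{n+1})$ may identify two meridian generators and is not a free-factor embedding; freeness of the colimit still follows because generators go to generators).

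The genuine gap is the trivial-tangle claim, and it is not merely a matter of unwritten detail: one of its supporting assertions is false. Distinct components of $\{\tilde{\bar\gamma}\}$ need not project to distinct geodesics of $\mathbb H^2$ -- the deck transformations covering the fiber direction carry a lift $\tilde{\bar\gamma}_\alpha$ to infinitely many other components lying over the \emph{same} geodesic $\tilde\gamma_\alpha$, and several of these meet any given $K_n$. For such a pair, $\Delta_\alpha$ meets the higher strand along an entire sub-arc (not at an isolated point over a crossing), and $\Delta_\alpha\cap\Delta_\beta$ is two-dimensional, so the ``finitely many forbidden points, perturb locally'' scheme does not apply as stated. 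Moreover, even for strands over transverse chords, the over/under data at crossings can be cyclic (e.g. $f_a>f_b$, $f_b>f_c$, $f_c>f_a$ at the three crossings of a triangle of chords), so there is no consistent height ordering and the disks must genuinely slide past one another; this is exactly the simultaneity problem you flag and defer, i.e. the crux of your proof is left unproven. The claim itself is likely true and your approach is completable: for instance, working in the Klein model the projected geodesics are straight chords of a convex disk, so one can choose a direction transverse to all chords appearing in $K_n$ and view $T_n$ as (part of) a braid, whence triviality follows by combing, since no boundary is being fixed; alternatively one can fall back on a piecewise Wirtinger/van Kampen argument as in the paper. As written, however, the key lemma is not established.
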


By adding our argument to their proof we obtain a version of Theorem \ref{FH}  in the more general setting of link complements in  Seifert-fibered spaces, whose projection to their hyperbolic 2-orbifold base  is a filling geodesic multi-curve. Our main result is:

\begin{theorem}\label{sf}
Suppose $\mathcal{O}$ is a hyperbolic $2$-orbifold and $\overline\gamma$ a link in an orientable Seifert-fibered space $M$ over the orbifold $\mathcal{O}$ projecting injectively to a filling geodesic multi-curve $\gamma$  in $\mathcal{O}.$ Then, the complement of $\overline\gamma$ in $M,$ denoted by $M_{\overline\gamma},$ is a hyperbolic manifold of finite volume.
\end{theorem}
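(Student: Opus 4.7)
The plan is to verify the hypotheses of Thurston's hyperbolization theorem for Haken 3-manifolds with toroidal boundary. Since $\bar\gamma$ is a link in an orientable Seifert-fibered space $M$, the drilled manifold $M_{\bar\gamma}$ is compact, orientable, and has boundary a disjoint union of tori, one per component of $\bar\gamma$. It therefore suffices to show that $M_{\bar\gamma}$ is irreducible, boundary-irreducible, atoroidal, and not Seifert-fibered.

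First, I would check irreducibility and boundary-incompressibility. The Seifert-fibered space $M$ has universal cover homeomorphic to $\mathbb{R}^3$, hence is itself irreducible. An essential 2-sphere in $M_{\bar\gamma}$ would bound a ball $B \subset M$ meeting $\bar\gamma$ non-trivially; projecting $B \cap \bar\gamma$ to $\mathcal{O}$ and using that each component of $\gamma$ is a closed geodesic of a filling multi-curve rules this out. A compressing disk for a boundary torus would produce an embedded sphere after surgery, similarly excluded, or would show that a component of $\bar\gamma$ is null-homotopic in $M$, contradicting that its projection is an essential closed geodesic in $\mathcal{O}$.

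The heart of the proof is atoroidality. Let $T \subset M_{\bar\gamma}$ be a $\pi_1$-injective embedded torus; I would split into two cases depending on whether $T \hookrightarrow M$ is $\pi_1$-injective. If $T$ is $\pi_1$-injective in $M$, then by the torus theorem for Seifert-fibered spaces $T$ may be isotoped to a vertical torus, and hence it projects to an essential simple closed curve $c$ in $\mathcal{O}$ that is disjoint from $\gamma$; since $\gamma$ is filling, no such $c$ exists, a contradiction. If instead $T$ is null-homotopic in $M$, then $T$ lifts homeomorphically to an embedded torus in $\widetilde M \setminus \{\tilde{\bar\gamma}\}$, whose fundamental group would consequently contain $\mathbb{Z}^2$. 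By Theorem \ref{uc}, however, $\pi_1(\widetilde M \setminus \{\tilde{\bar\gamma}\})$ is free, and free groups contain no $\mathbb{Z}^2$ subgroup. This is the case where I expect the main obstacle to lie, and it is precisely the gap in \cite{FH13} that Theorem \ref{uc} is designed to close.

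Finally, I would argue that $M_{\bar\gamma}$ is not itself Seifert-fibered: a hypothetical Seifert fibration would, together with the tori boundary components, force the base 2-orbifold to admit essential simple closed curves disjoint from the cusp projections of $\gamma$, again contradicting the filling hypothesis. With irreducibility, atoroidality, and non-Seifert-fiberedness established, and $M_{\bar\gamma}$ being Haken (its boundary tori are incompressible), Thurston's hyperbolization theorem for Haken manifolds with toroidal boundary furnishes a complete finite-volume hyperbolic metric on the interior of $M_{\bar\gamma}$, completing the proof.
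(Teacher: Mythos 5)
Your overall frame (Thurston hyperbolization for Haken manifolds with torus boundary, with atoroidality as the heart and Theorem \ref{uc} as the key input) matches the paper, but there are two genuine gaps. The more serious one is that your case split for an essential torus $T\subset M_{\bar\gamma}$ --- ``$\pi_1$-injective in $M$'' versus ``null-homotopic in $M$'' --- is not exhaustive. The image of $\pi_1(T)$ in $\pi_1(M)$ can have rank one: $T$ compressible in $M$ but not null-homotopic. This intermediate case is substantive and is exactly the paper's Case 2: compressing $T$ and using irreducibility of $M$, the torus bounds a solid torus $V\subset M$ which must contain a component of $\bar\gamma$; one then shows (using primitivity of lifted geodesics) that $V$ contains a unique component $\bar\eta$ generating $\pi_1(V)$, and finally uses the unknottedness of the lifts $\tilde{\bar\eta}$ in $\widetilde M\cong\mathbb R^3$ (Corollary \ref{unk}, a by-product of the proof of Theorem \ref{uc}) to conclude that $T$ is boundary-parallel, contradicting essentiality. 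Freeness of $\pi_1(\widetilde M\setminus\{\tilde{\bar\gamma}\})$ alone does not dispose of this case, and your proposal never addresses it.

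The second gap is that Theorem \ref{uc} is stated and proved only for Seifert-fibered spaces over a hyperbolic \emph{surface}: its proof builds a simple triangulation of $\Sigma$ adapted to $\gamma$ and a thick-cylinder exhaustion of $\widetilde M$, which is not set up in the presence of cone points. You invoke it (and the vertical/horizontal classification of essential tori) directly over the orbifold $\mathcal O$. The paper instead first reduces to the surface case: since the Fuchsian group $\Gamma=\pi_1^{orb}(\mathcal O)$ is residually finite, it has a torsion-free finite-index subgroup, giving a finite cover $\widehat M\to M$ over a genuine surface; Lemma \ref{a} shows the preimage of $\gamma$ is still filling, and Lemma \ref{b} shows atoroidality descends from $\widehat M_{\bar\gamma_0}$ to $M_{\bar\gamma}$. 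Without this reduction (or an orbifold version of Theorem \ref{uc}) your null-homotopic case is not justified as written. A minor further point: essential tori in a Seifert-fibered space may a priori be horizontal as well as vertical; this is easily excluded because a horizontal torus would cover the hyperbolic base, but it should be said.
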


Once the hyperbolicity of $M_{\bar\gamma}$ is settled, by the Mostow's Rigidity Theorem \cite{BP1992}, we can pursue the problem of estimating the volume of $M_{\bar\gamma}.$ The volume invariant has been studied in the particular case of canonical lifts of geodesics in  the projective unit tangent bundle $PT^1(\Sigma)$ of a hyperbolic surface $\Sigma$ or the modular orbifold. Upper bounds have been found in terms of the geodesic length in \cite{BPS16} and a combinatorial  lower bound by the second author in \cite{Rod17}.

In \cite[Sec. 5]{Rod17} the second author noticed that the behaviour of the volume of $M_{\bar\gamma}$ among different lifts of $\gamma$ does not depend on the diagram given by the couple $(\gamma,\mathcal O).$ More precisely the second author proved that:

\begin{proposition}\label{lin}
For any hyperbolic metric $X$ on $\Sigma,$ there exists a sequence of $\{\gamma_n\}_{n\in\mathbb N}$ filling closed geodesics and respective lifts $\{\overline{\gamma_n}\}_{n\in\mathbb N}$ in  $PT^1(\Sigma)$ with $\ell_X(\gamma_n)\nearrow \infty,$ such that,
$$ k_X\ell_X(\gamma_n) \leq  \Vol(M_{\overline{\gamma_n}}),$$
where $k_X$ is a positive constant that depends on the metric $X.$ Moreover, there exists a constant $V_0>0$ such that $\Vol(M_{\widehat{\gamma_n}})< V_0$ for every $n\in\mathbb{N},$ where $\widehat{\gamma_n}$ is the canonical lift of $\gamma_n$ on  $PT^1(\Sigma).$
 \end{proposition}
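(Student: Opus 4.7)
I propose an explicit construction: fix a filling closed geodesic $\gamma_0\subset(\Sigma,X)$ and, for each $n\in\mathbb{N}$, let $\gamma_n$ be the filling geodesic multi-curve consisting of $n$ copies of $\gamma_0$, so that $\ell_X(\gamma_n)=n\,\ell_X(\gamma_0)\nearrow\infty$. Since the canonical lift of a closed geodesic is determined purely by its tangent-direction data and is insensitive to multiplicity, each of the $n$ copies of $\gamma_0$ has the same canonical lift $\widehat{\gamma_0}\subset PT^1(\Sigma)$; hence $\widehat{\gamma_n}=\widehat{\gamma_0}$ as embedded links, and consequently $M_{\widehat{\gamma_n}}=M_{\widehat{\gamma_0}}$. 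By Theorem~\ref{FH}, $M_{\widehat{\gamma_0}}$ is hyperbolic of finite volume, and $V_0:=\Vol(M_{\widehat{\gamma_0}})$ provides the required uniform upper bound.

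For the linear lower bound, I will take $\overline{\gamma_n}$ to be the disjoint union of $n$ fiber-translates of $\widehat{\gamma_0}$ inside the $S^1$-bundle $PT^1(\Sigma)\to\Sigma$. Generic choices of the $n$ fiber angles avoid the finitely many values at which two translates would collide over a self-intersection point of $\gamma_0$, so the components remain pairwise disjoint and embedded, and each projects injectively onto $\gamma_0$ away from its double points. This makes $\overline{\gamma_n}$ an admissible lift of $\gamma_n$, and by Theorem~\ref{sf} the complement $M_{\overline{\gamma_n}}$ is hyperbolic of finite volume, with exactly $n$ cusps. To extract a linear volume bound, I plan to apply the Margulis-lemma estimates in the style of Adams' work on multi-cusped hyperbolic $3$-manifolds, which furnish $n$ pairwise-disjoint embedded horoball cusp neighborhoods each of volume at least a universal constant $c>0$; summing the cusp-volume contributions gives
\[\Vol(M_{\overline{\gamma_n}})\ \geq\ cn\ =\ \frac{c}{\ell_X(\gamma_0)}\,\ell_X(\gamma_n),\]
so $k_X:=c/\ell_X(\gamma_0)$ is the required positive constant.

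The heart of the argument, and the main obstacle, is the cusp-volume step: one needs a lower bound on the volume contributed by each cusp that does not decay as $n\to\infty$, in order to upgrade the qualitative observation $\Vol(M_{\overline{\gamma_n}})\to\infty$ (which follows from Thurston--Jorgensen rigidity of cusped hyperbolic volumes) to the claimed linear rate in $n$. A cleaner geometric alternative I may pursue is to realize $M_{\overline{\gamma_n}}$ as dominated by a degree-$n$ cyclic cover of $M_{\widehat{\gamma_0}}$ along the $S^1$ fibers, in which case the linear dependence of the volume on $n$ would follow immediately from the multiplicativity of volume under finite covers.
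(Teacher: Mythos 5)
Your construction does not prove the statement, and its central step fails. Taking $\gamma_n$ to be $n$ copies of one fixed geodesic $\gamma_0$ does not produce a sequence of filling closed geodesics (nor even a geodesic multi-curve: the ``components'' coincide as point sets, and $\ell_X(\gamma_n)=n\ell_X(\gamma_0)$ only by counting the same curve with multiplicity), and the union of $n$ fiber-rotation translates of $\widehat{\gamma_0}$ does not project injectively in the sense required by Theorem \ref{sf}: the projection is $n$-to-$1$ onto $\gamma_0$, so distinct components do not map to distinct components of a multi-curve and the theorem cannot be invoked. Worse, the complement is genuinely non-hyperbolic. Choose the rotation angles inside an interval shorter than the smallest angle between the two branches of $\gamma_0$ at a self-intersection; then the rotation sweep between two consecutive translates is a properly embedded annulus in $M_{\overline{\gamma_n}}$ joining two distinct boundary tori. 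Such an annulus is never boundary-parallel, and if $M_{\overline{\gamma_n}}$ were hyperbolic its boundary tori would be incompressible, so the annulus would be essential --- impossible, since finite-volume hyperbolic $3$-manifolds contain no essential annuli. Hence there is no hyperbolic volume for the Adams cusp-count estimate to bound, and that part of your argument never gets started. The ``moreover'' clause is likewise obtained only through the degenerate identification $\widehat{\gamma_n}=\widehat{\gamma_0}$, which empties the statement of content, and the fallback via a fiberwise degree-$n$ cyclic cover does not help: the preimage link lives in a different Seifert-fibered space, not in $PT^1(\Sigma)$, and a domination of $M_{\overline{\gamma_n}}$ by that cover would bound $\Vol(M_{\overline{\gamma_n}})$ from \emph{above} (Gromov norm is non-increasing under degree-one maps), not from below.

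What the statement actually requires --- and what \cite{Rod17}, to which the paper attributes this proposition, does --- is a sequence of genuinely distinct geodesics, e.g.\ $\gamma_n=\phi^n(\gamma_1)$ for a suitable mapping class $\phi$ (compare the Dehn-twist construction in Theorem \ref{lbsi}), so that $\ell_X(\gamma_n)\nearrow\infty$. A diffeomorphism of $\Sigma$ induces a fiber-preserving diffeomorphism of $PT^1(\Sigma)$ carrying canonical lifts to canonical lifts, so all the complements $M_{\widehat{\gamma_n}}$ are homeomorphic and Mostow rigidity gives the uniform bound $V_0$; this is the real source of the ``moreover'' part, not an identification of the knots themselves. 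The linear lower bound is then proved for other, non-canonical lifts $\overline{\gamma_n}$ of these same $\gamma_n$ (which do satisfy the injective-projection hypothesis) by a combinatorial estimate that grows with $n$ --- the arcs-in-pants count of Theorem \ref{1} via Agol--Storm--Thurston, or the alternating-diagram/twist-region bound used in Theorem \ref{lbsi} --- combined with the fact that for such a sequence the combinatorial quantity and $\ell_X(\gamma_n)$ grow at comparable (linear in $n$) rates. None of these ingredients is present in your proposal, so the gap is not a technical refinement of your cusp-volume step but the absence of a valid construction of the lifts themselves.
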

By constructing a particular ideal triangulation on $M_{\bar\gamma}$ one can give a volume upper bound to $M_{\bar\gamma}$, independent of the lift $\bar\gamma$, which is linear in terms of the self-intersection number of $\gamma.$ 
\begin{theorem}\label{ub}
Let $M$ be a Seifert-fibered space over a hyperbolic $2$-orbifold $\mathcal{O}$. Then, for any link $\bar\gamma\subset M$ projecting injectively to a filling geodesic multi-curve $\gamma$ on $\mathcal{O}$:
$$\Vol(M_{\overline{\gamma}})< 8v_3 i(\gamma,\gamma).$$
Where $v_3$ is the volume of the regular ideal tetrahedron and $i(\gamma,\gamma)$ the self-intersection number of $\gamma.$
 \end{theorem}
Furthermore, by \cite[Thm. 1.1]{HP2018} one can construct a continuous lift inside the  projective unit tangent bundle of a punctured hyperbolic surface over some closed geodesics such that the knot complement's hyperbolic volume is, up to a multiplicative factor, the self-intersection number of the geodesic multi-curve. The sequences of geodesics, lifts and estimate of the volume's lower bound of the corresponding knot complements is proven in the following result:

\begin{corollary}\label{how-pur}
Let $\Sigma_{g,n}$ be an $n$-punctured hyperbolic surface, $n\geq 1$, then there exists a sequence of $\{\gamma_n\}_{n\in\mathbb N}$ filling closed geodesics with $i(\gamma_n,\gamma_n)\nearrow \infty,$ and respective lifts $\{\overline{\gamma_n}\}_{n\in\mathbb N}$ in  $PT^1(\Sigma_{g,n})$ such that,
$$\frac{v_8}{2}(i(\gamma_n,\gamma_n)-(2-2g))\leq\Vol(M_{\overline{\gamma_n}})< 8v_3 i(\gamma_n,\gamma_n),$$
where $v_3$ $(v_8)$ is the volume of the regular ideal tetrahedron (octahedron) and $i(\gamma_n,\gamma_n)$ is the self-intersection number of $\gamma_n.$
 \end{corollary}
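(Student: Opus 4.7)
The plan is to dispatch the upper bound directly from \thmref{ub} and to realize the lower bound by choosing the geodesics and lifts so that \cite[Thm.~1.1]{HP2018} applies with a clean decomposition into regular ideal octahedra. Since the number of punctures satisfies $n \geq 1$, the unit tangent bundle $PT^1(\Sigma_{g,n})$ is an orientable Seifert-fibered space over the hyperbolic $2$-orbifold $\Sigma_{g,n}$, so \thmref{ub} applies to any link $\bar\gamma_n \subset PT^1(\Sigma_{g,n})$ whose projection to $\Sigma_{g,n}$ is a filling geodesic multi-curve $\gamma_n$ and is injective, yielding the upper bound $\Vol(M_{\bar\gamma_n}) < 8v_3\, i(\gamma_n,\gamma_n)$.

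For the lower bound I would first produce a sequence of filling closed geodesics $\{\gamma_n\}_{n\in\mathbb N}$ on $\Sigma_{g,n}$ with $i(\gamma_n,\gamma_n) \nearrow \infty$, for example by applying increasing powers of a pseudo-Anosov mapping class to a fixed filling geodesic multi-curve and straightening. For each $\gamma_n$ I would then construct a continuous lift $\bar\gamma_n \subset PT^1(\Sigma_{g,n})$ whose diagram on $\Sigma_{g,n}$ realizes every self-intersection of $\gamma_n$ as an actual crossing of the two strands, so that $M_{\bar\gamma_n}$ is naturally identified with the complement of a link in the thickened surface $\Sigma_{g,n} \times I$ whose projection has exactly $i(\gamma_n,\gamma_n)$ crossings. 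Feeding this diagram into \cite[Thm.~1.1]{HP2018} produces a decomposition of $M_{\bar\gamma_n}$ whose volume is controlled below by $\tfrac{v_8}{2}$ times the crossing count, corrected by the Euler characteristic $2-2g$ of the closed genus-$g$ surface accounting for the pieces of the decomposition that do not contribute a full regular ideal octahedron. This yields exactly $\Vol(M_{\bar\gamma_n}) \geq \tfrac{v_8}{2}\bigl(i(\gamma_n,\gamma_n) - (2-2g)\bigr)$.

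The main obstacle is arranging the continuous lift $\bar\gamma_n$ so that the resulting link diagram in $\Sigma_{g,n} \times I$ satisfies the combinatorial hypotheses required by \cite{HP2018} (typically a twist-reducedness or weak generalized alternation condition) and verifying that the combinatorial crossing number of that diagram equals the geometric self-intersection number $i(\gamma_n,\gamma_n)$; in particular one must ensure that the assignment of over/under crossings induced by the lift is the one making the Howie-Purcell decomposition geometric. Once this is achieved, combining the two inequalities and observing $i(\gamma_n,\gamma_n) \to \infty$ completes the corollary.
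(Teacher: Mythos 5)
Your overall strategy (upper bound from \thmref{ub}, lower bound via \cite[Thm.~1.1]{HP2018}) is the same as the paper's, but the step you defer as ``the main obstacle'' is precisely the mathematical content of the result, and your proposed construction does not get around it. The paper does not take an arbitrary sequence of filling geodesics (such as powers of a pseudo-Anosov applied to a fixed curve): in general such geodesics need \emph{not} admit any choice of over/under crossings making the diagram weakly generalized alternating --- indeed the paper remarks (see Figure \ref{alt3}.b) that not every closed geodesic on a surface of genus $\geq 1$ admits an alternating diagram, no matter which lift you choose. This is why Theorem \ref{lbsi} builds a very specific sequence: on $\Sigma_{1,1}$ one fixes a simple closed geodesic $s$ and obtains $\gamma_n$ by repeated Dehn twisting along $s$, chooses the lift in a neighbourhood of a section so that the diagram is alternating, and then extends to arbitrary $\Sigma_{g,n}$ by surgering with filling curves on $\Sigma_{1,2}$ (the $\bar\star$ construction, with an explicit fix of the crossings to restore alternation) together with adding/removing punctures in complementary disks. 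Without exhibiting such a family, ``feeding the diagram into \cite{HP2018}'' is not justified, so your lower bound is not established.

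There is a second, smaller inaccuracy: $M_{\overline{\gamma_n}}$ is \emph{not} naturally identified with a link complement in the thickened surface $\Sigma_{g,n}\times I$; it is the complement in the circle bundle $\Sigma_{g,n}\times \mathbb S^1\cong PT^1(\Sigma_{g,n})$, which merely \emph{contains} the thickened-surface complement $N_\varepsilon(S)\setminus\overline{\gamma_n}$ as a piece. To transfer the Howie--Purcell bound to $\Vol(M_{\overline{\gamma_n}})$ the paper passes through simplicial volume: it performs the trivial Dehn filling along the puncture torus (which also explains why the correction term is $\chi(\Sigma_g)=2-2g$ rather than $\chi(\Sigma_{g,n})$), and then uses \cite[6.5.2]{Th1978} and \cite[6.5.4]{Th1978} to compare $\Vol(N_\varepsilon(S_g)\setminus\overline{\gamma_n})$, the Gromov norm of the filled manifold, and $\Vol(M_{\overline{\gamma_n}})$. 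You also need the observation that for geodesics in minimal position the diagram has no bigons, so the twist number in \cite{HP2018} equals $i(\gamma_n,\gamma_n)$; you flag this but do not argue it. In short: the upper bound half of your proposal matches the paper, but the lower bound half omits both the explicit alternating construction and the Gromov-norm comparison that make the argument work.
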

The previous result shows that self-intersection is the optimal bound when considering general topological lifts. By generalising arguments of the second author \cite[Theorem 1.5]{Rod17} to the Seifert-fibered setting we also give a combinatorial lower bound:

\begin{theorem}\label{1}
 Given a pants decomposition $\Pi$ on a hyperbolic $2$-orbifold $\mathcal O$, a Seifert-fibered space $M$ over $\mathcal O,$ and a filling geodesic multi-curve $\gamma$ on $\mathcal O,$ for any closed continuous lift $\bar\gamma$ we have that : 
$$  \frac{v_3}{2}\sum_{P \in \Pi}(\sharp\{\mbox{ isotopy classes of} \hspace{.2cm}  \bar\gamma\mbox{-arcs in} \hspace{.2cm} p^{-1}(P)\}-3)\leq\Vol(M_{\bar\gamma}),$$
where $v_3$ is the volume of a regular ideal tetrahedron.
 \end{theorem}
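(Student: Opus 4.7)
The strategy is to decompose $M_{\bar\gamma}$ along the Seifert preimage of the pants decomposition $\Pi$ and apply an Agol--Storm--Thurston-type inequality piece by piece. First, I would observe that the preimage $p^{-1}(\partial\Pi)$ of the pants curves is a collection of essential tori in $M$. Since $\gamma$ is a filling multi-curve distinct from $\partial\Pi$ and $\bar\gamma$ projects injectively to $\gamma$, these tori can be made disjoint from $\bar\gamma$ and form a disjoint family $\mathcal{S}\subset M_{\bar\gamma}$. Cutting along $\mathcal{S}$ produces the pieces $\bigl\{p^{-1}(P)\setminus\bar\gamma\bigr\}_{P\in\Pi}$.

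Next, I would verify that each torus of $\mathcal{S}$ is incompressible and boundary-incompressible in $M_{\bar\gamma}$. Incompressibility in $M$ is classical for fiber tori of Seifert fibrations over a hyperbolic base, while the filling condition on $\gamma$ rules out new compressing disks after removing $\bar\gamma$: such a disk would project to a disk in $\mathcal{O}$ bounded by a component of $\partial\Pi$ and disjoint from $\gamma$, violating filling. By Theorem~\ref{sf}, $M_{\bar\gamma}$ admits a finite-volume hyperbolic metric, so the Agol--Storm--Thurston inequality yields
\[
\Vol(M_{\bar\gamma})\ \geq\ \sum_{P\in\Pi}\Vol\!\bigl(p^{-1}(P)\setminus\bar\gamma\bigr),
\]
where each summand denotes the total hyperbolic volume of the hyperbolic JSJ pieces of $p^{-1}(P)\setminus\bar\gamma$.

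The heart of the argument is then a per-piece lower bound of $\tfrac{v_3}{2}(N_P-3)$, where $N_P$ is the number of isotopy classes of $\bar\gamma$-arcs in $p^{-1}(P)$. Following \cite[Thm.~1.5]{Rod17}, I would group the $\bar\gamma$-arcs in $p^{-1}(P)$ by parallelism class and cut along the essential annuli between consecutive classes. This exhibits within $p^{-1}(P)\setminus\bar\gamma$ a disjoint family of embedded ideal simplices whose total volume is at least $\tfrac{v_3}{2}(N_P-3)$, the three being subtracted to account for the three boundary components of the pair of pants. Summing over $P\in\Pi$ gives the claimed inequality.

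The main obstacle will be adapting the per-piece construction from the product setting $PT^1(P)\cong P\times S^1$ of \cite{Rod17} to a general Seifert-fibered piece $p^{-1}(P)$, which may be a non-trivial circle bundle and may contain exceptional fibers arising from cone points of $\mathcal{O}$. One must check that the essential annuli used in the cutting argument persist in this more general fibration, and that the exceptional fibers --- which lie in the interior of $p^{-1}(P)$ and away from $\bar\gamma$ --- do not obstruct the embedding of disjoint ideal simplices. The remaining steps are comparatively routine given Theorem~\ref{sf}, the classical theory of incompressible surfaces in Seifert-fibered spaces, and the filling assumption on $\gamma$.
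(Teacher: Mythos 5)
Your first step already fails, and it fails for the very reason the theorem is interesting: the vertical tori $p^{-1}(\eta_i)$ over the pants curves \emph{cannot} be made disjoint from $\bar\gamma$. Since $\gamma$ is filling, it essentially intersects every pants curve $\eta_i$, so $\bar\gamma$ meets every torus $p^{-1}(\eta_i)$. Consequently you cannot cut $M_{\bar\gamma}$ along a family of closed essential tori and invoke additivity-type reasoning; the paper instead cuts along the \emph{punctured} tori $(T_{\eta_i})_{\bar\gamma}=p^{-1}(\eta_i)\setminus\bigl(p^{-1}(\eta_i)\cap\bar\gamma\bigr)$, which are incompressible by \cite[Lemma 2.5]{Rod17}. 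This is precisely why the Agol--Storm--Thurston theorem is needed: it handles cutting along a properly embedded incompressible \emph{surface} $S$ and gives $\frac{v_3}{2}\,\| D(M_{\bar\gamma}\vert S)\|\leq \Vol(M_{\bar\gamma})$, where $D$ denotes the double along the cut locus. Your displayed inequality $\Vol(M_{\bar\gamma})\geq\sum_P\Vol\bigl(p^{-1}(P)\setminus\bar\gamma\bigr)$ is not what AST provides, and the pieces $D(P_{\bar\gamma})$ one actually has to analyze are link complements in the doubled Seifert-fibered spaces $D(p^{-1}(P))$ (Definition \ref{DP}), not the pieces $p^{-1}(P)\setminus\bar\gamma$ themselves.

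The per-piece estimate is also not established. The claim that cutting along annuli ``exhibits a disjoint family of embedded ideal simplices whose total volume is at least $\tfrac{v_3}{2}(N_P-3)$'' is not an argument, and your accounting for the $-3$ (``the three boundary components of the pair of pants'') is not the right explanation. In the paper the chain is: $v_3\|D(P_{\bar\gamma})\|=v_3\|D(P_{\bar\gamma})^{hyp}\|\geq\Vol\bigl(D(P_{\bar\gamma})^{hyp}\bigr)\geq v_3\cdot\sharp\{\mbox{cusps of }D(P_{\bar\gamma})^{hyp}\}$, using Gromov \cite{Gro82} and Adams' cusp-count volume bound \cite{Ada88}, followed by an injection from isotopy classes of $\bar\gamma$-arcs in $p^{-1}(P)$ into the cusps of the hyperbolic JSJ piece; injectivity uses that the components of the characteristic submanifold are Seifert-fibered, so two arcs mapping to the same component would be isotopic fibers. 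The $-3$ arises because, by Remark \ref{bda}, when the arcs project to one of the six configurations of disjoint simple arcs in $P$ the hyperbolic piece of $D(P_{\bar\gamma})$ is empty, and such configurations carry at most three isotopy classes of $\bar\gamma$-arcs. Your final concern about non-trivial bundles and exceptional fibers is comparatively minor, but without correcting the cutting surface and supplying the cusp-counting argument the proposal does not prove the stated bound.
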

 
  \paragraph{\textbf{Outline:}} In section 2 we recall some basic facts about Seifert-fibered spaces and orbifolds. In section 3 we prove Theorem \ref{sf} and Theorem \ref{uc}. In section \ref{geoinv} by using results in \cite{HP2018} and \cite{Rod17} we prove some volume bounds.
 
 \vspace{0.3cm}

\textbf {Acknowledgments:} The second author would like to express his gratitude to the University of Rennes I and the University of Helsinki for creating
an attractive mathematical environment. The second author also thanks Juan Souto for  discussions on these topics. The first author would like to thank Ian Biringer and Martin Bridgeman for helpful discussions. Both authors would like to express their gratitude to Andrew Yarmola for stimulating conversations. Moreover, we would like to thank the anonymous referee for many helpful comments and suggestions.

\section{Seifert-fibered spaces and orbifolds}

In this section, we recall some known facts about the topology of Seifert-fibered spaces and orbifolds. For more details see \cite{Ja,He}.

\begin{definition}

A compact 3-manifold $M$ is a \emph{Seifert-fibered space} if $M$ is the union of a collection $\set{C_\alpha}_{\alpha\in A}$ of pairwise disjoint simple closed curves called \emph{fibers} such that every fiber $C_\alpha$ has a closed neighbourhood $V_\alpha$ homeomorphic to a solid torus and a covering map $p_\alpha:\mathbb D^2\times \mathbb S^1\rightarrow V_\alpha$ satisfying:

\begin{itemize}
\item[(i)] for all $x\in\mathbb  D^2$ we have that $p_\alpha(\set x\times \mathbb S^1)=C_\beta$ for some $\beta\in A$ so that $V_\alpha$ is a union of fibers;
\item[(ii)] $p^{-1}_\alpha(C_\alpha)$ is connected;
\item[(iii)] the group of covering transformation is generated by $r_{n,m}$ for $n,m$ relatively prime integers such that:
$$ r_{n,m}(re^{i\theta},e^{i\phi})\eqdef (re^{i(\theta+2\frac m n\pi)}, e^{i(\phi+\frac{2\pi}n)})$$
\end{itemize}
If $\vert n\vert=1$ we have that $p_\alpha$ is a homeomorphism and we say that $C_\alpha$ is a \emph{regular fiber}, otherwise we say it is a \emph{singular 
fiber}.
\end{definition}

Note that whenever $\vert n\vert> 1$ by (ii) $C_\alpha=p_\alpha(\set 0\times\mathbb  S^1)$ and for $x\neq 0$ we have that $p(\set x\times \mathbb  S^1)$ is mapped to a fiber $C_\beta$ which crosses the meridional disk $p_\alpha(\mathbb D^2\times\set 1)$ $n$ times and wraps $m$ times around $C_\alpha$. Also, since every fiber in a neighbourhood of a singular fiber is regular we get that if $M$ is compact it has finitely many singular fibers.

\begin{definition}
We say that a Hausdorff topological space $\mathcal O$ is an \emph{orbifold} if we have a covering $\mathcal U\eqdef \set{U_i}_{i\in\mathbb N}$, closed under finite intersections, and continuous maps: $\phi_i: V_i\rightarrow U_i$, for $V_i$ open subsets of $\mathbb R^2$, invariant under a faithful linear action of a finite group $\Gamma_i$ such that $\phi_i: V_i/\Gamma_i\rightarrow U_i$ is a homeomorphism. Moreover, we say that the charts $\set{U_i}_{i\in\mathbb N}$ form an \emph{orbifold atlas} if:
\begin{itemize}
\item for $U_i\subset U_j$ we have a monomorphism $f_{ij}:\Gamma_i\hookrightarrow \Gamma_j$;
\item for $U_i\subset U_j$ we have a $\Gamma_i$-equivariant homeomorphism $\psi_{ij}$, called a gluing map, from $V_i$ to an open subset of $V_j$;
\item for all $i,j$ we have $\phi_j\circ\psi_{ij}=\phi_i$;
\item the gluing maps are unique up to compositions with group elements.
\end{itemize}
\end{definition}

\begin{note*} Even though a general orbifold can have  reflections in the rest of this work we will only consider orbifolds with conical points. Therefore, the set of singular points in any orbifold $\mathcal O$ will always be a discrete set.
\end{note*}

If $M$ is a Seifert-fibered space we have a natural projection map: $\pi:M\rightarrow \mathcal O$ obtained by mapping every fiber $C_\alpha$ to a point, the space $\mathcal O$ is called the \emph{orbit-manifold}. Given a neighbourhood of $C_\alpha$ the map $\pi\circ p_\alpha:\mathbb D^2\times\set 1\rightarrow \mathcal O$ is an embedding if $C_\alpha$ is a regular fiber and is equivalent to the projection onto the orbit space of $\mathbb D^2\times\set 1$ under a periodic rotation otherwise. Therefore, the quotient space $\mathcal O$ is naturally an orbifold  with discrete singular locus.

\begin{note*}
From the classification Theorem of Seifert-fibered spaces, see \cite{Se}, follows that any Seifert-fibered space $M$ is homeomorphic to an $\mathbb S^1$-bundle over a compact surface $S$ where we glue some singular neighbourhoods along some tori boundary components. Equivalently, we can think of a Seifert-fibered space as an orientable $\mathbb  S^1$-bundle over a compact orbifold $\mathcal O$.\end{note*}

\section{Hyperbolicity of lift complements}
 The aim of this section is to prove Theorem \ref{sf}:
\begin{reptheorem}{sf}
Suppose $\mathcal{O}$ is a hyperbolic $2$-orbifold and $\overline\gamma$ a link in an orientable Seifert-fibered space $M$ over the orbifold $\mathcal{O}$ that projects injectively to a filling geodesic multi-curve $\gamma$ in $\mathcal{O}.$ Then $M_{\overline\gamma}$ is a hyperbolic manifold of finite volume.
\end{reptheorem}
\begin{definition}
Given a Seifert-fibered space $M$ with its bundle map $p:M\rightarrow \mathcal O$ we say that a link $\bar\gamma\subset M$ \emph{projects injectively} to a multi-curve $\gamma\subset \mathcal O$ if distinct components of $\bar\gamma$ map, under $p$, to distinct components of $\gamma$ and such that the projection $p$ is injective except at self-intersection points of $\gamma$ which have two pre-images.
\end{definition}
Let $M$ be a Seifert-fibered space over a hyperbolic $2$-orbifold $\mathcal O$, $\gamma$ a geodesic multi-curve on $\mathcal O$ and $\overline\gamma$ a link in $M$ projecting injectively to $\gamma$ under $p$. Then we have the following commutative diagram:
$$\xymatrix{ & M \ar[d]^p\\ \coprod_{i=1}^n\mathbb{S}^1 \ar@{^{(}->}[ur]^{\overline\gamma} \ar[r]_\gamma &\mathcal O}$$
From now on we denote by $M_{\overline\gamma}$ the complement of a normal neighbourhood of $\overline\gamma$ in $M.$ 
\begin{definition}
For a hyperbolic $2$-orbifold $\mathcal O$ with a discrete set of singular points $\mathcal S$ we say that a multi-curve $\gamma$ of closed geodesics is \emph{filling} if $\gamma$ is disjoint from $\mathcal S$ and if $\mathcal O\setminus\gamma$ is a collection of disks, once-puncture disks or disks with one conical point.\end{definition}
\begin{figure}[h]
\centering
\includegraphics[scale=0.5]{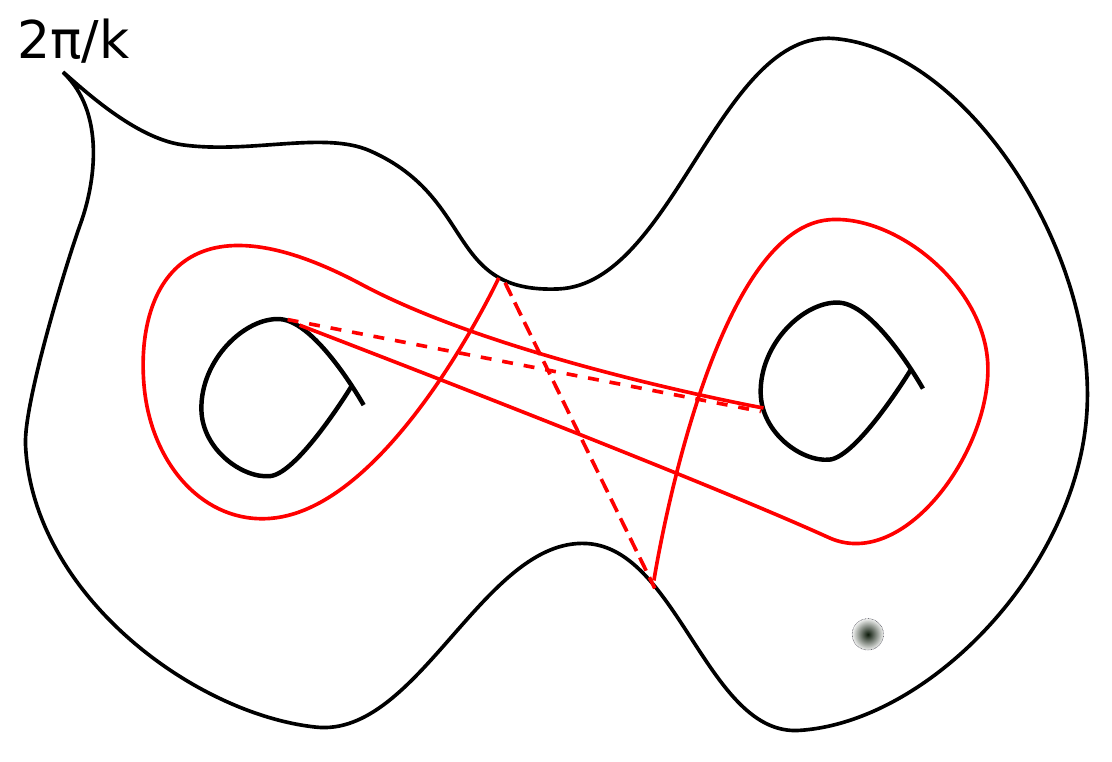}
\caption{A filling geodesic on a $2$-orbifold.
}\label{fill}\end{figure}

In order to prove Theorem \ref{sf} we will first reduce it to the case in which the orbifold $\mathcal O$ is a surface, i.e. to the case where the Seifert-fibered space has a hyperbolic surface $\Sigma$ as base.

\begin{lemma}\label{a}
If $\Sigma\overset q\rightarrow \mathcal O$  is a finite cover from a surface $\Sigma$ and $\gamma$ is a filling geodesic multi-curve in the orbifold $\mathcal O$ then the union of all lifts $\gamma_0$ is also a filling geodesic multi-curve on $\Sigma$.
\end{lemma}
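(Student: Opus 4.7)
My plan is to interpret $q$ as a Riemannian orbifold cover (with respect to the hyperbolic structure on $\mathcal O$ pulled back to $\Sigma$) and to analyse the complement $\Sigma\setminus\gamma_0$ one connected component at a time via orbifold covering theory.

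First, I would observe that $\gamma_0\eqdef q^{-1}(\gamma)$ is a finite disjoint union of closed geodesics on $\Sigma$: closed, since any lift of a closed loop through a finite cover is closed, and geodesic, since $q$ is a local isometry away from the singular locus. Because $\gamma$ is disjoint from the singular set $\mathcal S$ of $\mathcal O$ by hypothesis and $\Sigma$ is a surface with no singular points of its own, the first clause in the definition of filling is automatic for $\gamma_0$.

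Next, the restriction $q\colon\Sigma\setminus\gamma_0\to\mathcal O\setminus\gamma$ is again a finite orbifold covering. For any connected component $C\subset\Sigma\setminus\gamma_0$, the image $D\eqdef q(C)$ is a connected component of $\mathcal O\setminus\gamma$, and $q\colon C\to D$ is a finite orbifold covering with $C$ a manifold. I would now run a case analysis on the hypothesis for $D$. If $D$ is an open disk, then $\pi_1^{\text{orb}}(D)$ is trivial, so $q|_C$ is a homeomorphism and $C$ is a disk. If $D$ is a once-punctured disk, then $\pi_1^{\text{orb}}(D)\cong\mathbb Z$; every finite-index subgroup of $\mathbb Z$ is again $\mathbb Z$, so $C$ is once again a once-punctured disk. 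Finally, if $D$ is a disk with a single cone point of order $k$, then $\pi_1^{\text{orb}}(D)\cong\mathbb Z/k$, and because $C$ is a manifold the subgroup of $\pi_1^{\text{orb}}(D)$ corresponding to $C$ must be torsion-free, hence trivial; therefore $C$ is the orbifold universal cover of $D$, which is an open disk.

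Combining the three cases, every connected component of $\Sigma\setminus\gamma_0$ is either a disk or a once-punctured disk, which is exactly the definition of filling for a multi-curve on a surface. The main obstacle is the cone-point case, but it reduces to the standard fact that a manifold cover of a $2$-orbifold corresponds to a torsion-free subgroup of its orbifold fundamental group, together with the observation that a finite cyclic group has no nontrivial torsion-free subgroup.
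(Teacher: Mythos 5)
Your argument is correct, and it differs from the paper's in how the singular points are treated. The paper first deletes the singular set $\mathcal S$, so that $\gamma$ becomes a filling multi-curve on the punctured surface $\Sigma_0=\mathcal O\setminus\mathcal S$, invokes the fact that the preimage of a filling multi-curve under a finite cover of surfaces is again filling in $q^{-1}(\Sigma_0)$, and then caps off the finitely many punctures of $\Sigma$ lying over cone points by disks to conclude that $\gamma_0$ fills $\Sigma$. You instead work component by component with orbifold covering theory: each component $C$ of $\Sigma\setminus\gamma_0$ finitely covers a component $D$ of $\mathcal O\setminus\gamma$, and the three allowed types of $D$ are handled by classifying the finite-index (and, in the cone-point case, torsion-free) subgroups of $\pi_1^{\mathrm{orb}}(D)\in\{1,\mathbb Z,\mathbb Z/k\}$. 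The trade-off: your version is more self-contained, since it effectively proves rather than quotes the ``finite covers preserve filling'' statement, and the cone-point case is dispatched cleanly by the observation that a manifold cover of a disk with one cone point corresponds to a torsion-free subgroup of $\mathbb Z/k$ and is therefore the disk universal cover; the paper's reduction is shorter because after puncturing at $\mathcal S$ no orbifold language is needed at all. The only steps worth making explicit in your write-up are the standard ones you use implicitly: that each component $C$ maps \emph{onto} a full component $D$ (a covering map restricted to the preimage of a connected open set surjects from each component), and that in the once-punctured-disk case the end of $C$ lying over the puncture of $D$ is itself a puncture of $\Sigma$, so $C$ is a once-punctured disk in the required sense and not merely an abstract annulus.
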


\begin{proof} Let $\Sigma_0\eqdef \mathcal O\setminus \mathcal S$, then $\gamma$ is filling in $\Sigma_0$. Consider the induced cover $q:q^{-1}(\Sigma_0)\rightarrow \Sigma_0$ for $q^{-1}(\Sigma_0)$ a connected subsurface of $\Sigma$. Then $q^{-1}(\gamma)$ is filling in $q^{-1}(\Sigma_0)$. However, $\Sigma=q^{-1}(\Sigma_0)\cup \mathcal D$ for $\mathcal D$ a collection of disks each covering a disk with a cone point. Thus $q^{-1}(\gamma)$ is filling in $\Sigma$ and since $q^{-1}(\gamma)=\gamma_0$ we are done. \end{proof}

\begin{lemma}\label{b}
Given a finite cover $\pi:\widehat M_{\bar{\gamma_0}} \rightarrow  M_{\bar\gamma}$ we have that if $\widehat M_{\bar{\gamma_0}} $ is atoroidal so is $M_{\bar\gamma}$.

\end{lemma}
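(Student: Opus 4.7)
I would prove the contrapositive: if $M_{\bar\gamma}$ contains an essential torus then so does $\widehat M_{\bar{\gamma_0}}$. Let $T\subset M_{\bar\gamma}$ be essential and write $\pi^{-1}(T)=\bigsqcup_{i} \tilde T_i$. Since each restriction $\pi|_{\tilde T_i}\colon \tilde T_i\to T$ is a finite covering of the torus $T$, every $\tilde T_i$ is itself a torus.

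The incompressibility step is standard. If some $\tilde T_i$ admitted an embedded compressing disk $D\subset \widehat M_{\bar{\gamma_0}}$, then $[\partial D]\neq 0$ in $\pi_1(\tilde T_i)$, and by injectivity of $\pi_*\colon \pi_1(\tilde T_i)\hookrightarrow \pi_1(T)$ the loop $\pi\circ\partial D$ would be nontrivial in $\pi_1(T)$ yet null-homotopic in $M_{\bar\gamma}$ via the singular disk $\pi\circ D$. Dehn's Lemma then produces an embedded compressing disk for $T$ in $M_{\bar\gamma}$, contradicting essentiality of $T$. Hence every $\tilde T_i$ is incompressible in $\widehat M_{\bar{\gamma_0}}$.

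For the not-boundary-parallel step I argue by contradiction. Suppose every $\tilde T_i$ cobounds a product region $\tilde N_i\cong T^2\times I$ with a boundary torus $\tilde B_i\subset \partial \widehat M_{\bar{\gamma_0}}$. The nested structure of these product regions lets me choose $\tilde T_1$ so that $\tilde N_1$ is innermost, i.e.~$\operatorname{int}(\tilde N_1)\cap \pi^{-1}(T)=\emptyset$. Setting $R:=\pi(\tilde N_1)\subset M_{\bar\gamma}$, the restriction $\pi|_{\tilde N_1}\colon \tilde N_1\to R$ is then a finite cover of compact $3$-manifolds, with $\partial R=T\cup B_1$ where $B_1=\pi(\tilde B_1)$ is a boundary torus of $M_{\bar\gamma}$. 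The only compact $3$-manifolds finitely covered by $T^2\times I$ are $T^2\times I$ itself and the twisted $I$-bundle over the Klein bottle; the latter has only one torus boundary component, so $R\cong T^2\times I$. Therefore $T$ is parallel to the boundary torus $B_1$ of $M_{\bar\gamma}$, contradicting essentiality of $T$, and the proof is complete.

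The main technical obstacle is verifying that $\pi|_{\tilde N_1}$ is genuinely a covering map onto $R$. This reduces to checking that the innermost choice makes $\tilde N_1$ a union of components of $\pi^{-1}(R)$, which requires a local analysis of $\pi$ across $T$ and a small separate argument in case $T$ is non-separating in $M_{\bar\gamma}$ (where one can instead cut along $T$ and apply the same reasoning in the cut-open manifold).
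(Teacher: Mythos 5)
Your proposal is correct, but it takes a genuinely different route from the paper's. The paper argues directly and homotopically: it lifts the essential torus $T$, notes that each component of $\pi^{-1}(T)$ is an incompressible torus, invokes atoroidality of $\widehat M_{\bar\gamma_0}$ to homotope such a component $\widehat T$ into a boundary torus $\widehat S$, and then pushes that homotopy down through $\pi$ to conclude that $T$ is homotopic into the torus component of $\partial M_{\bar\gamma}$ covered by $\widehat S$. That is shorter, but it only yields the homotopic form of boundary-parallelism (and does so, strictly speaking, for the finite cover $\widehat T\to T$, which needs a word to upgrade to $T$ itself). You instead run the contrapositive with embedded-surface tools: incompressibility lifts as you say, and if every component of $\pi^{-1}(T)$ were boundary-parallel you take an innermost product region $\tilde N_1\cong T^2\times I$, show it finitely covers a complementary piece of $T$ downstairs, and use the classification of compact orientable $3$-manifolds finitely covered by $T^2\times I$ to produce an actual product region between $T$ and a boundary torus, contradicting essentiality. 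The technical point you flag is real but resolves cleanly if you cut from the start: $\pi$ induces a finite covering from $\widehat M_{\bar\gamma_0}$ cut along $\pi^{-1}(T)$ to $M_{\bar\gamma}$ cut along $T$; since the interior of $\tilde N_1$ misses $\pi^{-1}(T)$, it is a component of the cut cover and hence covers a component $Q$ of the cut base, and $\partial Q=\pi(\partial\tilde N_1)$ consists of exactly one copy of $T$ and one torus of $\partial M_{\bar\gamma}$, which simultaneously excludes the twisted $I$-bundle over the Klein bottle and shows the non-separating case cannot occur, so no separate argument is needed there. The only other detail to write out is the innermost choice: a component of $\pi^{-1}(T)$ lying inside a product region is isotopic to a level torus and cuts off a smaller product region, so a region whose interior contains the fewest components of $\pi^{-1}(T)$ contains none.
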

\begin{proof}

Given an essential torus $ T\subset M_{\bar\gamma}$ the restriction $\pi: \pi^{-1}(T)\rightarrow T$ is a finite cover hence, every component of $\pi^{-1}(T)$ is an essential torus $\widehat T\subset \widehat M_{\bar\gamma_0}$. Thus, since $\widehat M_{\bar\gamma_0}$ is atoroidal the essential torus $\widehat T$ is homotopic into a torus component $\widehat S$ of $\partial \widehat M_{\bar{\gamma_0}}$. The torus component $\widehat  S$ must cover a torus component $S$ of $\partial M_{\bar\gamma}$. By pushing the homotopy via $\pi$ we see that $T$ is also homotopic into a torus component $\pi(\widehat T)$ of $\partial M_{\bar\gamma}$.  \end{proof}

We now reduce the proof of the main theorem to the case in which the orbifold is an actual surface.

\begin{proposition}\label{c} If Theorem \ref{sf} holds for hyperbolic surfaces then it holds for orbifolds.

\end{proposition}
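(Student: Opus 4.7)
The strategy is to upgrade the orbifold setting to the surface setting by a single manifold cover and then pull back every relevant object. Concretely, since $\mathcal{O}$ is a good orbifold (its orbifold fundamental group is a Fuchsian group, hence residually finite), Selberg's lemma furnishes a finite orbifold cover $q \colon \Sigma \to \mathcal{O}$ where $\Sigma$ is a genuine hyperbolic surface. Pulling back the Seifert fibration $p \colon M \to \mathcal{O}$ along $q$ produces a finite cover $\pi \colon \widehat M \to M$ together with a commutative square in which $\widehat M$ is an orientable Seifert-fibered space over the hyperbolic surface $\Sigma$; the cover $\pi$ is an honest manifold covering because $\Sigma$ has no orbifold singularities and $M$ is already a manifold.

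Next I would lift the link. Let $\bar\gamma_0 \eqdef \pi^{-1}(\bar\gamma) \subset \widehat M$ and $\gamma_0 \eqdef q^{-1}(\gamma) \subset \Sigma$. Because $q$ is a local isometry of hyperbolic structures (away from the singular locus, which $\gamma$ avoids by the filling hypothesis), $\gamma_0$ is again a multi-curve of closed geodesics on $\Sigma$; by \lemref{a} it is filling. The lifted link $\bar\gamma_0$ projects to $\gamma_0$ under the lifted Seifert fibration $\widehat p \colon \widehat M \to \Sigma$, and this projection is still injective off of self-intersection points of $\gamma_0$: this is a purely local statement and away from $\mathcal{S}$ both $q$ and $\pi$ are covering maps, so the injectivity/double-point structure of $p|_{\bar\gamma}$ transfers verbatim to $\widehat p|_{\bar\gamma_0}$. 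Consequently $(\widehat M, \bar\gamma_0)$ satisfies the hypotheses of \thmref{sf} in its already-established surface case, and so $\widehat M_{\bar\gamma_0}$ is a finite volume hyperbolic manifold.

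It remains to descend hyperbolicity from $\widehat M_{\bar\gamma_0}$ to $M_{\bar\gamma}$. The restriction $\pi \colon \widehat M_{\bar\gamma_0} \to M_{\bar\gamma}$ is a finite covering of compact orientable 3-manifolds with torus boundary. Atoroidality of $M_{\bar\gamma}$ is exactly \lemref{b}. Irreducibility of $M_{\bar\gamma}$ follows from irreducibility of $\widehat M_{\bar\gamma_0}$ by the equivariant sphere theorem (Meeks--Simon--Yau), and incompressibility of $\partial M_{\bar\gamma}$ follows from that of $\partial \widehat M_{\bar\gamma_0}$ by a standard tower argument: any compressing disk in $M_{\bar\gamma}$ could be lifted to $\widehat M_{\bar\gamma_0}$, contradicting the incompressibility there. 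With these three properties in hand, Thurston's hyperbolization theorem (Haken case suffices, since $M_{\bar\gamma}$ has non-empty torus boundary) yields a finite volume complete hyperbolic structure on the interior of $M_{\bar\gamma}$.

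The main subtlety I anticipate is Step 2, namely verifying that the pullback Seifert space $\widehat M$ really is a manifold Seifert-fibered space over $\Sigma$, and that $\bar\gamma_0$ still projects injectively to $\gamma_0$; both are routine given that $\gamma$ is disjoint from the singular locus of $\mathcal{O}$ and that $q$ is a manifold cover, but they should be stated explicitly so that the surface version of \thmref{sf} can be invoked without qualification.
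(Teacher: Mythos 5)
Your proposal is correct and is essentially the paper's own argument: reduce to the surface case via a finite cover $\widehat M\to M$ lying over a finite surface cover $\Sigma\to\mathcal O$, lift the link, check that the lifted multi-curve is geodesic, filling (Lemma \ref{a}) and injectively projected, apply the surface case, and descend (atoroidality via Lemma \ref{b}, the rest by standard covering arguments). The only cosmetic difference is how the cover is built: you apply Selberg's lemma to the Fuchsian group $\pi_1^{\mathrm{orb}}(\mathcal O)$ and pull back the Seifert fibration, whereas the paper uses the $\mathbb{H}^2\times\mathbb{R}$ or $\SL$ geometry of $M$ to obtain the exact sequence $1\to\mathbb{Z}\to\pi_1(M)\to\Gamma\to 1$ and takes the preimage of a torsion-free finite-index subgroup of $\Gamma$; both constructions produce the same finite cover over a genuine hyperbolic surface.
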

\begin{proof}

By the Geometrization Theorem, see \cite{Sco83}, the Seifert-fibered space $M$ over a compact hyperbolic orbifold $\mathcal O$ has a geometry modelled on either $\mathbb{H}^2\times \mathbb{R}$ or $\SL.$
\vskip .2cm
Assume that $G\cong\pi_1(M)$ is a discrete group of isometries of $\mathbb{H}^2\times \mathbb{R}$ that acts freely and has quotient an orientable $\mathbb{S}^1$-bundle $M.$ Notice that the isometry group of $\mathbb{H}^2\times\mathbb{R}$ can be naturally identified with $ \Isom(\mathbb{H}^2)\times  \Isom(\mathbb{R})$ and we regard the factors as subgroups in the usual way. As $G$ is discrete and $M$ is a Seifert bundle, then $G\cap \Isom(\mathbb{R})=\mathbb{Z}.$ Let $\Gamma$ denote the image of the projection $G\xrightarrow{p} \Isom(\mathbb{H}^2).$ Then we have the exact sequence:
$$1\rightarrow \mathbb{Z}\rightarrow G\rightarrow \Gamma\rightarrow 1.$$
where $\Gamma$ is a discrete group of isometries of $\mathbb{H}^2.$
\vskip .2cm
On the other hand, if $G$ is a discrete group of isometries of $\SL$ acting freely and  with quotient an orientable $\mathbb{S}^1$-bundle $M$ we have the following exact sequence:
$$0\rightarrow \mathbb{R}\rightarrow \Isom(\SL)\xrightarrow{p} \Isom(\mathbb{H}^2)\rightarrow 1.$$
If $\Gamma$ denotes $p(G),$ we have the exact sequence:
$$1\rightarrow \mathbb{Z}\rightarrow G\rightarrow \Gamma\rightarrow 1.$$
for $\Gamma$ a discrete group of isometries of $\mathbb{H}^2.$
\vskip .2cm

In either case $\Gamma$ is a finitely generated subgroup of $\Isom(\mathbb{H}^2),$ thus  by \cite{Bau62} $\Gamma$ is residually finite. Hence, $\Gamma$ has a torsion free subgroup $\widehat\Gamma$ of finite index. Let $\widehat G$ be the subgroup of $G$ projecting onto $\widehat\Gamma$ and let $\widehat M\eqdef\SL/\widehat G$ or $(\mathbb{H}^2\times \mathbb{R})/\widehat G$. By the first isomorphism Theorem \cite{DF} we have that: $G/ \widehat G\cong \Gamma/\widehat \Gamma$ hence $\widehat G$ is also finite index in $G$. Therefore, we have the following commutative diagram: 
$$
\xymatrix{\widehat M \ar[r]^\pi \ar[d]^{\widehat p} & M \ar[d]^{p} \\
\Sigma\eqdef\mathbb{H}^2/\widehat \Gamma \ar[r] ^{\hat\pi}& \mathbb{H}^2/\Gamma\eqdef \mathcal O
}$$
where $\pi:\widehat M\rightarrow M$ is a finite index cover. 
Thus, by lifting $\bar\gamma\subset M$ to $\bar\gamma_0\eqdef\pi^{-1}(\bar\gamma)\subset \widehat M,$ we get a finite cover: 
$$\pi:\widehat M_{\bar{\gamma_0}} \rightarrow  M_{\bar\gamma}$$
Moreover, by the commutativity of the previous diagram the link $\bar\gamma_0$ projects injectively onto the filling multi-curve $\gamma_0=\hat\pi^{-1}(\gamma)$. By Lemma \ref{a} we get that $\gamma_0$ satisfy the conditions of \ref{sf} for $\Sigma.$  Then by Proposition \ref{c} if $\widehat M_{\bar{\gamma_0}}$ is atoroidal, we get that $M_{\bar\gamma}$ is also atoroidal. \end{proof}
Therefore, to show Theorem \ref{sf} it suffices to prove:
\begin{theorem}\label{sfsurface}
Suppose $\Sigma$ is a hyperbolic surface and $\overline\gamma$ is a link in a orientable Seifert-fibered space $M$ over $\Sigma$ that projects injectively to a filling multi-curve $\gamma$ of closed geodesics in $\Sigma.$ Then  $M_{\overline\gamma}$ is a hyperbolic manifold of finite volume.
\end{theorem}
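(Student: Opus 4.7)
The plan is to invoke Thurston's hyperbolization theorem for Haken $3$-manifolds, applied to $M_{\bar\gamma}$. To do so I would verify that $M_{\bar\gamma}$ is a compact, orientable, irreducible, $\partial$-irreducible Haken manifold whose boundary is a disjoint union of tori, and which is atoroidal and not Seifert-fibered; by Thurston's theorem such a manifold then admits a complete hyperbolic metric of finite volume.

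The boundary and $\partial$-irreducibility are clear: $\partial M_{\bar\gamma}$ is a disjoint union of tori $\partial N(\bar\gamma_i)$, and each meridian is essential in $M_{\bar\gamma}$ since the projection $\gamma_i$ of $\bar\gamma_i$ is an essential closed geodesic in $\Sigma$, so $\bar\gamma_i$ cannot be contained in a $3$-ball of $M$. For irreducibility I would use that the Seifert-fibered space $M$ over the hyperbolic surface $\Sigma$ is itself irreducible, so any sphere $S\subset M_{\bar\gamma}$ bounds a $3$-ball $B\subset M$; if $B$ contained a component $\bar\gamma_i$ then $\bar\gamma_i$, and hence $\gamma_i$, would be null-homotopic, absurd, so in fact $B\subset M_{\bar\gamma}$.

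Atoroidality is the heart of the argument. Let $T\subset M_{\bar\gamma}$ be an essential torus. If $T$ is not null-homotopic in $M$, I would apply the Foulon--Hasselblatt argument, which carries over to our Seifert-fibered setting as noted in the introduction: by the vertical/horizontal classification of essential tori in Seifert-fibered spaces over a hyperbolic base, horizontal tori are excluded (negative Euler characteristic of $\Sigma$), so up to isotopy $T$ is vertical and projects to an essential simple closed curve of $\Sigma$ disjoint from $\gamma$, contradicting the filling hypothesis on $\gamma$. If instead $T$ is null-homotopic in $M$, then $T$ lifts to a torus $\tilde T\subset \tilde M\setminus \set{\tilde{\bar\gamma}}$ in the universal cover, and the inclusion yields an injection $\Z^2=\pi_1(\tilde T)\hookrightarrow \pi_1(\tilde M\setminus\set{\tilde{\bar\gamma}})$, contradicting Theorem \ref{uc} since a free group has no $\Z^2$ subgroup.

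Finally, $M_{\bar\gamma}$ is not Seifert-fibered: a Seifert structure compatible with its toroidal boundary would force $\bar\gamma$ to sit as a collection of regular fibers of a Seifert structure on $M$, which is incompatible with distinct components of $\bar\gamma$ projecting to closed geodesics meeting transversely at the self-intersection points of $\gamma$. The expected main obstacle is precisely the null-homotopic case of atoroidality --- exactly what Theorem \ref{uc} was designed to handle --- while the remaining verifications follow the template of \cite{FH13}.
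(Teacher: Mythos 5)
Your overall strategy (geometrization/hyperbolization after checking irreducibility, boundary tori, and atoroidality, with Theorem \ref{uc} handling null-homotopic tori) matches the paper, but your atoroidality argument has a genuine gap: the dichotomy ``$T$ null-homotopic in $M$'' versus ``$T$ not null-homotopic, hence vertical or horizontal'' is not exhaustive. The vertical/horizontal classification of Hatcher applies only to tori that are incompressible in $M$, i.e.\ when $\pi_1(T)$ injects, so that its image has rank two. There is an intermediate case you do not treat: $T$ incompressible in $M_{\bar\gamma}$ but compressible in $M$, with image of $\pi_1(T)$ in $\pi_1(M)$ infinite cyclic (rank one). Such a torus is neither null-homotopic (so Theorem \ref{uc} does not immediately apply) nor isotopic to a vertical or horizontal torus, so neither branch of your argument excludes it.

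This rank-one case is exactly where the paper does extra work. There, one compresses $T$ in $M$, uses irreducibility of $M$ to conclude that $T$ bounds a solid torus $V\subset M$, and shows (using that each component of $\bar\gamma$ projects to a primitive geodesic and that the projection is injective on components) that exactly one component $\bar\eta$ of $\bar\gamma$ lies in $V$ and generates $\pi_1(V)$. One must then rule out that $T$ is a ``swallow--follow''--type torus around a knotted core: this is done by lifting $V$ to $\tilde V\cong\mathbb D^2\times\mathbb R$ in $\tilde M$ and invoking the unknottedness of the lifts of $\bar\gamma$ in the universal cover (Corollary \ref{unk}, a by-product of the proof of Theorem \ref{uc}), which forces $T$ to be parallel to $\partial N_\epsilon(\bar\eta)$, contradicting essentiality. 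Without this step your proof does not exclude essential tori of this kind, so you should replace your two-case split by the trichotomy on the rank (zero, one, two) of the image of $\pi_1(T)$ in $\pi_1(M)$ and supply the rank-one argument. (Your remaining verifications---irreducibility, boundary tori, and the exclusion of horizontal/vertical tori in the rank-two case---are in line with the paper; the brief remark that $M_{\bar\gamma}$ is not Seifert-fibered is also somewhat loose, but it is the rank-one torus case that is the substantive omission.)
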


\subsection{Proof of Theorem \ref{sfsurface}}
Before proving Theorem \ref{sfsurface} we need to introduce some objects.
\begin{definition}\label{sim} We say that a triangulation $\tau\eqdef \set{T_i}_{1\leq i\leq m}$ of a hyperbolic surface $\Sigma$  is \emph{simple} for a geodesic multi-curve $\gamma$ if:
 \begin{enumerate}
\item the punctures of $\Sigma$ are contained in the vertices of $\tau$ and every triangle $T\in\tau$ has at most one puncture;
\item the edges and vertices of each element in $\tau$ are distinct;
\item each edge of $\tau$ is a geodesic arc (or geodesic ray if one vertex is a puncture) transversal to $\gamma;$ 
\item in every triangle $T\in\tau$ we have that if $\gamma\cap T\neq\emptyset$ then it contains either two intersecting sub-arcs of $\gamma$ or a single sub-arc of $\gamma$ (see Figure \ref{tria}).
 \end{enumerate}
\end{definition}

 \begin{figure}[h]
\centering
\includegraphics[scale=0.3] {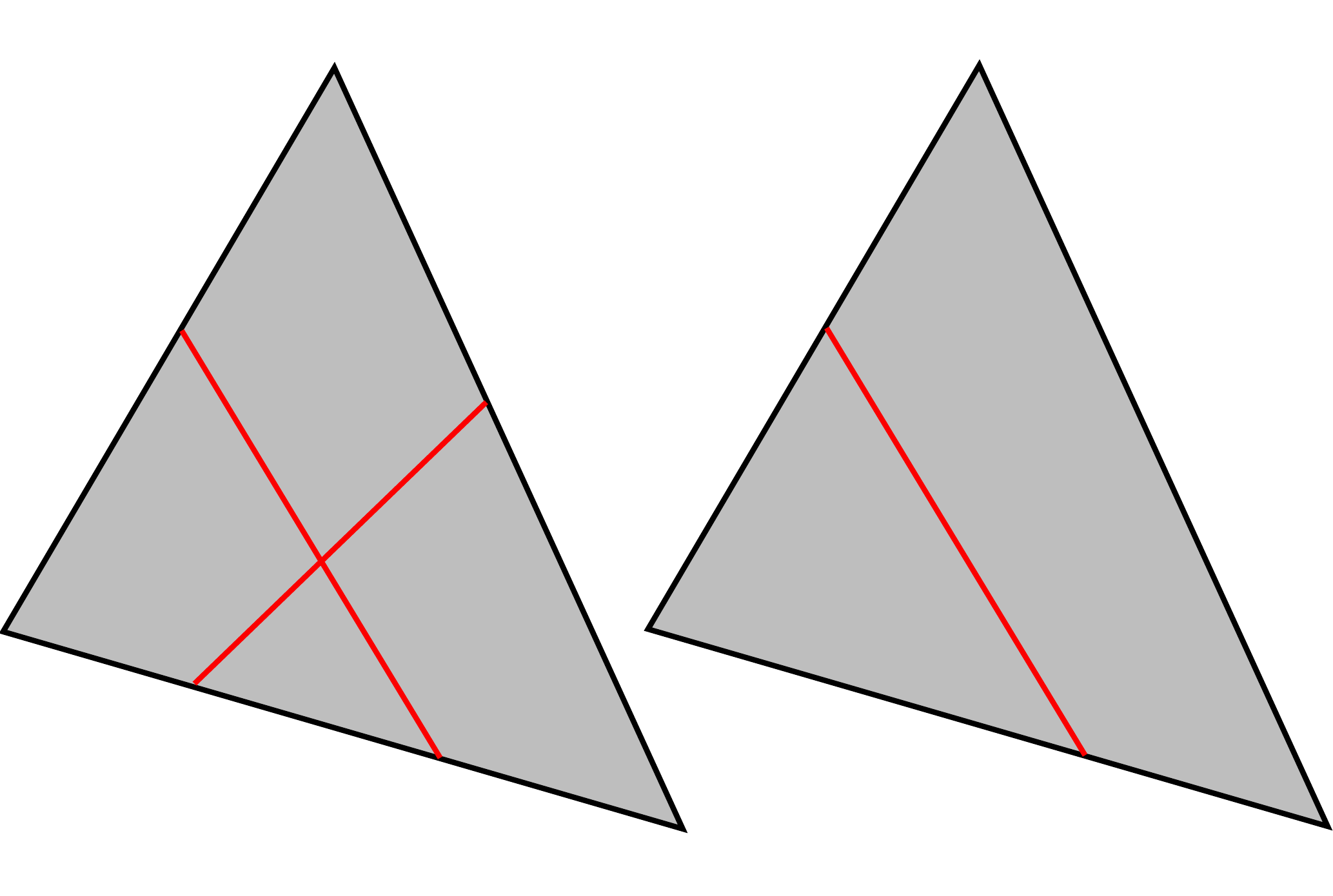}
\caption{Two possible $\gamma$-arcs configuration inside $T.$
}\label{tria}
\end{figure}

\begin{lemma}\label{exsim} Let $\gamma$ be a filling geodesic multi-curve in a hyperbolic surface $\Sigma$. Then, there exist a simple triangulation of $\Sigma$ relative to $\gamma.$
\end{lemma}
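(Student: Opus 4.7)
My plan is to construct the triangulation in two stages. First, I enclose each self-intersection of $\gamma$ in a small geodesic triangle that meets $\gamma$ in exactly the two crossing arcs. Then I extend this to a geodesic triangulation of $\Sigma$ whose remaining triangles each meet $\gamma$ in a single sub-arc or not at all.

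For Stage 1, fix a self-intersection point $p$ of $\gamma$. In a small convex neighbourhood $U_p$ of $p$, the set $\gamma\cap U_p$ is two geodesic arcs crossing transversally at $p$ and cutting $U_p$ into four sectors. I pick vertices $a_p,b_p,c_p\in U_p\setminus\gamma$ in three consecutive sectors (with $b_p$ in the middle one), arranged so that the geodesic $[a_p,c_p]$ passes through the fourth (opposite) sector. A short computation with signed distances to each side shows that the geodesic triangle $T_p\eqdef[a_p,b_p,c_p]$ contains $p$ in its interior and that each branch of $\gamma$ at $p$ meets $\partial T_p$ transversally in exactly two points. Hence $\gamma\cap T_p$ is exactly those two arcs crossing at $p$, matching condition~(4) in the two-intersecting-arcs case. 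By shrinking the $U_p$'s I arrange the $T_p$'s to be pairwise disjoint, away from the punctures, and with vertices off $\gamma$.

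For Stage 2, let $\Sigma'\eqdef\Sigma\setminus\bigsqcup_p\mathrm{int}(T_p)$. The intersection $\gamma\cap\Sigma'$ is a finite disjoint union of geodesic arcs with endpoints on $\bigsqcup\partial T_p$, together with possibly some simple closed geodesics (coming from intersection-free components of $\gamma$). Let $d>0$ be the minimum distance between distinct components of $\gamma\cap\Sigma'$, which is positive by compactness of the thick part, and fix $\delta\in(0,d)$ smaller than the local convexity radius of $\Sigma$. I build a geodesic triangulation $\tau'$ of $\Sigma'$ that extends the boundary triangulation $\bigsqcup\partial T_p$, has a vertex at each puncture (realized by ideal geodesic triangles there), all other vertices off $\gamma$, and every triangle of diameter less than $\delta$. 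Such a $\tau'$ is obtained by first including a sufficiently dense $\delta/2$-net of interior vertices (off $\gamma$) in addition to the $\partial T_p$ vertices and the punctures, and then forming a geodesic Delaunay-type triangulation. Transversality of edges with $\gamma$ is automatic, since two distinct complete geodesics in hyperbolic geometry meet in at most one point and transversally.

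Each triangle $T\in\tau'$ has diameter less than $d$, so $T$ meets at most one component of $\gamma\cap\Sigma'$, and $T$ is convex (being a small geodesic triangle), so that component meets $T$ in a single sub-arc or not at all. Combining both stages, $\tau\eqdef\tau'\cup\{T_p\}_p$ is a geodesic triangulation of $\Sigma$ satisfying all four conditions of Definition~\ref{sim}. I expect the main obstacle to be Stage~1: verifying that the specific placement of $a_p,b_p,c_p$ forces $T_p$ to contain $p$ in its interior and to intersect $\gamma$ only in the two crossing arcs. Naive placements, such as four vertices forming a square around $p$ subdivided by a diagonal, produce a triangle containing two disjoint sub-arcs of $\gamma$ and thus violate condition~(4); the three-consecutive-sector choice with $[a_p,c_p]$ crossing the fourth sector is what avoids this pathology. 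Stage~2, once Stage~1 is in hand, follows from standard facts about geodesic triangulations of hyperbolic surfaces of finite type.
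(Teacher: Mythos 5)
Your Stage 1 is fine, but Stage 2 has a genuine gap where the key condition (4) of Definition~\ref{sim} is supposed to be verified. You take $d$ to be the minimum distance between \emph{distinct} components of $\gamma\cap\Sigma'$ and then argue that a convex geodesic triangle $T$ of diameter $<\delta<d$ meets at most one component and, ``being convex,'' meets it in a single sub-arc. Convexity does not give this: a single component of $\gamma\cap\Sigma'$ is an embedded geodesic arc (or closed geodesic) in the surface, and it may return arbitrarily close to itself (think of a long geodesic strand running nearly parallel to another strand of the same arc across a thin part of $\Sigma$). An arbitrarily small convex triangle can therefore be crossed by two or more disjoint strands of the \emph{same} component, which violates condition (4). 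What is missing is a second smallness constant: $\delta$ must also be taken below the normal injectivity radius of the compact embedded $1$-manifold $\gamma\cap\Sigma'$ (equivalently, below an $\varepsilon_0>0$ such that two points of the same component at distance $<\varepsilon_0$ in $\Sigma$ are close along the arc), so that all points of a component meeting $T$ lie on one short geodesic sub-segment, whose intersection with the convex set $T$ is then connected. Such an $\varepsilon_0$ exists by compactness and embeddedness, so the gap is repairable, but the justification you give would not survive as written. Two further points need attention: requiring every triangle of $\tau'$ to have diameter $<\delta$ is incompatible with the ideal triangles you place at the punctures (infinite diameter); this is harmless only if you cut off horoball cusp neighbourhoods disjoint from the compact set $\gamma$ and cone them off separately, and that has to be said. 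And the existence of a geodesic, constrained Delaunay-type triangulation containing the prescribed edges $\partial T_p$, with all vertices off $\gamma$, each triangle with distinct vertices and edges, and controlled mesh, is asserted rather than argued -- this is exactly where conditions (1)--(3) of Definition~\ref{sim} get checked.

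For comparison, the paper's proof of Lemma~\ref{exsim} is combinatorial rather than metric: it builds small geodesic quadrangles (cut by a diagonal missing the crossing) around the self-intersection points, then triangulates the complementary regions of $\gamma$ directly -- punctured monogons and bigons using the puncture as an ideal vertex, $n$-gons with $n\geq 3$ by coning a polygonal approximation of the boundary to an interior or ideal vertex -- and finally triangulates regular neighbourhoods of the edges of the $4$-valent graph of $\gamma$. No fine-mesh or separation-scale argument is needed, because every remaining piece is by construction a neighbourhood of a single embedded $\gamma$-arc or disjoint from $\gamma$. Your one-triangle-per-crossing local model in Stage 1 is a perfectly good (indeed, with respect to condition (4), cleaner) alternative to the quadrangle-with-diagonal model the paper uses, but the real work of the lemma lies in covering the rest of $\Sigma$, and that is precisely the part of your argument that is currently incomplete.
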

\begin{proof} We will build the triangulation in $4$ steps.
 \begin{enumerate}
\item We start our triangulation around the self-intersection points of $\gamma.$ Let $x\in\gamma$ be a self-intersection point and consider a small piece-wise geodesic disk $D$ around it such that it contains only two intersecting $\gamma$-arcs and no punctures. Choose 4 vertices in $\partial D,$ one in each quadrant relative to the pair of $\gamma$-arcs, and the corresponding embedded geodesic quadrangle such that one of the diagonals does not pass through $x.$ The quadrangle, with this diagonal, gives a triangulation $\tau$ around all self-intersection points $x$ of $\gamma.$ Since $\gamma$ is filling, all connected components $C$ of $\Sigma\setminus\gamma$ are punctured disks or disks and they all contain $k>0$ vertices of $\tau$, where $k$ equal to the number of geodesic arcs of $\partial C$. We denote by $\mathcal M$ all components that are monogons or bigons. In either case, every component $C\subset \mathcal M$ is homeomorphic to a punctured disk.

\item Consider a connected component $C$ of $\mathcal M$. If the component of $\Sigma\setminus\gamma$ is a monogon we add an extra vertex $v'$ on the interior so that now we have $3$ marked points in $C$: the ideal vertex, a vertex of $\tau$ and $v'$. Then, we extend $\tau$ as in Figure \ref{simtri}.c).

 For bigons we have already two vertices of $\tau$ and one ideal vertex,  so we extend $\tau$ as in Figure \ref{simtri}.b). We still denote this triangulation by $\tau$ and we note that all components of $\Sigma\setminus \gamma$ that are not triangulated are $n$-gons, with $n\geq 3$, containing $n$ vertices of $\tau$.

\item Let $C$ be a connected $n$-gon, $n\geq 3$, in $\Sigma\setminus\gamma$. Then, we connect the $n$-vertices of $\tau$ in $C$ by geodesic arcs to form a simple loop $\alpha$ isotopic, in $C$ to $\partial C$. If $C$ does not contain an ideal vertex $w$ we add a vertex $v$ and cone the vertices of $\alpha$ to either $v$ or $w$. We still denote this triangulation $\tau$.

\item The components of $\Sigma\setminus\tau$ that have not been triangulated are regular neighbourhoods of $\gamma$-subarcs of the edges of the graph induced by $\gamma$ and can be triangulated by geodesic arcs as in Figure \ref{simtri}.a).
 \begin{figure}[h]
\centering
\includegraphics[scale=0.7] {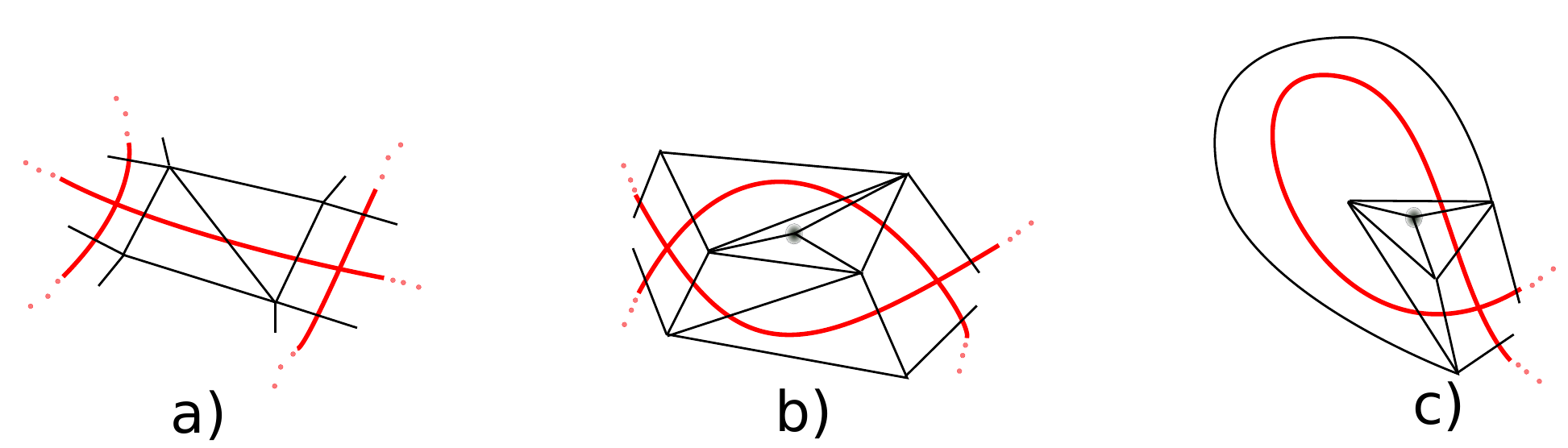}
\caption{Triangulation $\tau$ of $\Sigma$ along an embedded $\gamma$-arc, in red, of a $a)$ $(2+k)$-gon, $b)$ bigon, and $c)$ monogon.
}\label{simtri}
\end{figure}
 \end{enumerate}
Notice that by construction our triangulation satisfies the properties of a simple triangulation in Definition  \ref{sim}.
\end{proof}

 We will now build a partition, relative to a geodesic multi-curve $\gamma$, of the universal cover $\tilde M$ of $M$ induced by a given simple triangulation $\tau$ on $\Sigma$. In our setting we have:
$$\tilde M\overset q\twoheadrightarrow M\overset p\twoheadrightarrow \Sigma$$
Since $p$ is a bundle map we have that for all $T_i\in\tau$, $1\leq i\leq m$, the preimage: $p^{-1}(T_i)$ is homeomorphic to a solid torus $V_{T_i}\cong \mathbb S^1\times \mathbb D^2$. Moreover, the solid torus $V_{T_i}$ inherits from the triangulation $\set{T_i}_{1\leq i\leq m}$ a decomposition of $\partial V_{T_i}$ into:
	 \begin{enumerate}
	 \item three \emph{loops} $w_i^1,w_i^2,w_i^3$ corresponding to the pre-images of the vertices $v_i^1,v_i^2,v_i^3$ of $T_i$;
	 \item three \emph{faces} $F_i^1,F_i^2,F_i^3$ homeomorphic to annuli $I\times \mathbb{S}^1$ and corresponding to the pre-images of the edges $e_i^1,e_i^2,e_i^3$ of $T_i$.
	 \end{enumerate} 
	 By going to the universal cover $\tilde M$ of $M$ each $V_{T_i}$ lifts to a collection: $\coprod_{\alpha\in A_i} T_i^\alpha\times\mathbb R$, with $T_i^\alpha\cong \mathbb D^2$, and the previous decompositions of $V_{T_i}$ induces a decomposition of each $T_i^\alpha\times\mathbb R$ into:

	  \begin{enumerate}
	 \item three \emph{edges} $\tilde w_i^1,\tilde w_i^2,\tilde w_i^3$ each one homeomorphic to $\mathbb R$ and corresponding to the pre-images of the vertices $v_i^1,v_i^2,v_i^3$ of $T_i$;
	 \item three \emph{loops} $\tilde F_i^1,\tilde F_i^2,\tilde F_i^3$ homeomorphic to $I\times \mathbb R$ and corresponding to the pre-images of the edges $e_i^1,e_i^2,e_i^3$ of $T_i$.
	 \end{enumerate} 
\begin{remark}\label{remarka} By the above the discussion using the composition $\tilde M\overset q\twoheadrightarrow M\overset p\twoheadrightarrow \Sigma$ we have the following decomposition of $\tilde M$ into \emph{thick cylinders}:
$$\tilde M=\bigcup_{i=1}^m(p\circ q)^{-1}(T_i)=\bigcup_{i=1}^m\coprod_{\alpha\in A_i} T_i^\alpha\times\mathbb R$$  \end{remark}

\begin{lemma}\label{decomposition}
Let $\tilde M\overset{q}\twoheadrightarrow M$ be the universal covering map of the Seifert-fibered manifold $M$ and $M\overset p \twoheadrightarrow\Sigma$ be the Seifert map, for $\Sigma$ not a sphere. Given any simple triangulation $\tau$ on $\Sigma$ we have that $\tilde M=\cup_{n=1}^\infty K_n$ where each $K_n$ is simply connected and $K_n=K_{n-1}\cup_{S_n} T_{j_n}^{\alpha_n}\times\mathbb R$ for $S_n$ either one or two faces of $T^{\alpha_n}_{j_n}\times\mathbb R$.\end{lemma}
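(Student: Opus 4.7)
The plan is to reduce the construction of the exhaustion to a $2$-dimensional statement about the lifted triangulation of $\tilde\Sigma$ and then take products with $\mathbb R$.

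First, I would identify $\tilde M$ with $\tilde \Sigma \times \mathbb R$. The composition $p \circ q : \tilde M \to \Sigma$ lifts to a map $\tilde p : \tilde M \to \tilde \Sigma$ since $\tilde M$ is simply connected. The Seifert short exact sequence $1 \to \mathbb Z \to \pi_1(M) \to \pi_1(\Sigma) \to 1$ (valid because $\Sigma$ is not a sphere) shows that the $\mathbb S^1$-fiber of $p$ lifts in $\tilde M$ to a copy of $\mathbb R$, so $\tilde p$ is an $\mathbb R$-fibration; since $\tilde \Sigma$ is contractible this bundle is trivial, giving $\tilde M \cong \tilde \Sigma \times \mathbb R$. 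Under this identification the thick cylinders $T_i^\alpha \times \mathbb R$ from Remark \ref{remarka} are precisely the preimages $\tilde p^{-1}(T_i^\alpha)$, where the $T_i^\alpha$ run over the lifts in $\tilde \Sigma$ of the triangles of $\tau$; two thick cylinders share a face exactly when their projection triangles share an edge.

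Next, I would build an exhaustion of $\tilde \Sigma$ by sub-disks $D_n = \bigcup_{k \leq n} T_{j_k}^{\alpha_k}$, adding one triangle at a time. Starting from any $T_{j_1}^{\alpha_1}$, inductively pick a triangle $T_{j_n}^{\alpha_n}$ of the lifted triangulation that is adjacent to $D_{n-1}$ but not contained in it. The crucial geometric point is that $T_{j_n}^{\alpha_n}$ shares either $1$ or $2$ edges with $D_{n-1}$: it cannot share all $3$, because otherwise the three edges of $T_{j_n}^{\alpha_n}$ would all lie on $\partial D_{n-1}$, forcing $\partial D_{n-1} = \partial T_{j_n}^{\alpha_n}$ and making $D_{n-1} \cup T_{j_n}^{\alpha_n}$ a closed triangulated surface embedded in $\tilde \Sigma$ --- contradicting the non-compactness of $\tilde \Sigma$. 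Hence each $D_n$ remains a topological disk. To guarantee $\bigcup_n D_n = \tilde \Sigma$, I would process the triangles in breadth-first order on the (connected) dual graph of the lifted triangulation, so that every triangle is eventually reached.

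Setting $K_n := D_n \times \mathbb R$, we have $K_n = K_{n-1} \cup_{S_n} (T_{j_n}^{\alpha_n} \times \mathbb R)$, where $S_n$ is the product with $\mathbb R$ of the $1$ or $2$ shared edges and hence consists of exactly $1$ or $2$ faces. Simple connectivity of $K_n$ then follows by induction via van Kampen's theorem: each thick cylinder is homeomorphic to $\mathbb D^2 \times \mathbb R$ and so simply connected, while the intersection $S_n$ is simply connected as well --- a single face is $I \times \mathbb R$, and two faces meet along a vertex fiber $\cong \mathbb R$ and together deformation retract onto that line.

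The main point requiring care is the enumeration step, where one must simultaneously preserve the disk property of $D_n$ (which is exactly what forces the $1$- or $2$-edge gluing) and ensure that every triangle of the lifted triangulation is eventually added. The non-compactness of $\tilde\Sigma$ is precisely what excludes the forbidden $3$-edge gluing, which is what keeps the induction going.
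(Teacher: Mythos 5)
Your reduction of the problem to a $2$-dimensional exhaustion of $\tilde\Sigma$ (identifying $\tilde M\cong\tilde\Sigma\times\mathbb R$ and taking $K_n=D_n\times\mathbb R$) is fine and consistent with the set-up of Remark \ref{remarka}, but the key inductive step has a genuine gap. To keep $D_n$ a disk it is not enough to rule out the case where the new triangle $T_{j_n}^{\alpha_n}$ shares all three edges with $D_{n-1}$ (which is all your non-compactness argument excludes). The dangerous case is that the intersection $T_{j_n}^{\alpha_n}\cap D_{n-1}$ is \emph{disconnected}: since $D_{n-1}$ is a subcomplex, this intersection is a union of edges and vertices of $\partial T_{j_n}^{\alpha_n}$, and it can perfectly well consist of one boundary edge of $D_{n-1}$ together with the opposite vertex (a ``fjord-closing'' triangle bridging two far-apart portions of $\partial D_{n-1}$). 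In that case $D_n$ is an annulus, $K_n$ has $\pi_1\cong\mathbb Z$, and the gluing locus is a face together with an extra vertex line, so the conclusion of the lemma fails at that step. Nothing in your argument addresses this: breadth-first order on the dual graph is not shown to prevent it, and in fact combinatorial balls in a triangulated plane need not be simply connected, so ``process the triangles in BFS order'' cannot simply be asserted to preserve the disk property.

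This is exactly the point the paper's proof is organized around: instead of an unconstrained (or BFS) enumeration, it develops the exhaustion \emph{vertex star by vertex star}, marking an edge $\tilde w$ (the fiber line over a lifted vertex) of $\partial K_{n-1}$ and adjoining, one at a time and in cyclic order, all thick cylinders containing $\tilde w$, before moving to the next marked edge. Going around a single vertex of the lifted geodesic triangulation, consecutive triangles share the edge through that vertex, so each successive overlap is one or two faces meeting along $\tilde w$, i.e.\ connected and simply connected; this is the mechanism that controls the connectivity of the overlaps, and it is what your proposal is missing. To repair your argument you would need either to adopt such a star-completion ordering (and verify, as above, that non-consecutive triangles in a star meet only along the vertex fiber), or to prove directly that your chosen ordering always meets the current disk in a single boundary arc of the new triangle.
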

\begin{proof} By remark \ref{remarka} we have the following decomposition of $\tilde M=\bigcup_{i=1}^m\coprod_{\alpha\in A_i} T_i^\alpha\times\mathbb R$. 

\vspace{0.3cm}

 \textbf{Claim 1:} For $i\neq j$ the thick cylinders $T_\alpha^i\times\mathbb R$ and $T_\beta^j\times\mathbb R$ are either disjoint, share at most two faces or share only one edge.

\vspace{0.3cm}

\bpfc Suppose that they are not disjoint so that $T_i=p\circ q (T_\alpha^i\times\mathbb R)$ and $T_j=p\circ q(T_\beta^j\times\mathbb R)$ intersect in $\Sigma.$  Then, since they are distinct elements of the triangulation $\tau$ they must intersect in their boundary. Since $\Sigma$ is not a sphere it follows that $T_i$ and $T_j$ either intersect in a vertex or they share at most two edges and the result follows.
	 \epfc
	 
\vspace{0.3cm}
	 
	 We now claim:
	 
\vspace{0.3cm}

	  \textbf{Claim 2:} There are nested simply connected subsets $\set{K_n}_{n\in\mathbb N}$ of $\tilde M$ such that $\tilde M=\cup_{n=1}^\infty K_n$ and $K_n=K_{n-1}\cup _{S_n} T_{j_n}^{\alpha_n}\times\mathbb R$ where $S_n$ is at most two faces of $T_{j_n}^{\alpha_n}\times\mathbb R$ sharing an edge.
	  
\vspace{0.3cm}

	  \bpfc
	Pick $T_1\in\tau$ and let $K_1$ be any component $T_1^\alpha\times\mathbb R$ of $(p\circ q)^{-1}(T_1)$ in $\tilde M$. Then, mark the edges $\tilde w_1,\tilde w_2,\tilde w_3$ in $\partial K_1$ and \emph{develop} around them. That is, let $\set{T_{i_k}^{\alpha_k}\times\mathbb R}_{1\leq k\leq n_1}$ be the finitely many components of the decomposition of $\tilde M$ containing $\tilde w_1$ as an edge. By Claim 1 at least one of the $\set{T_{i_k}^{\alpha_k}\times\mathbb R}_{1\leq k\leq n_1}$, say $T_{i_1}^{\alpha_k}\times\mathbb R$, shares one or two faces $S$ with $K_1$. We then let $K_2\eqdef K_1\cup_ST_{i_1}^{\alpha_k}\times\mathbb R$. By repeating this for all $\set{T_{i_k}^{\alpha_k}\times\mathbb R}_{1\leq k\leq n_1}$ we have added all solid tori $T_j^\alpha\times\mathbb R$ having $\tilde w_1$ as an edge to $K_1$. The sets $\set{K_n}_{1\leq n\leq n_1+1}$ so constructed are all homeomorphic to $\mathbb D^2\times\mathbb R$, hence simply connected. This is because at every stage we glue a thick cylinder to another thick cylinder along a simply connected subset of their boundary.	 
	 
By repeating this with $\tilde w_2,\tilde w_3$ we get new simply connected subsets $\set{K_n}_{n_1+2\leq n\leq m}$, $m\in\mathbb N$ (see Figure \ref{exh}).

\begin{figure}[h]
\centering
\def\svgwidth{200pt}
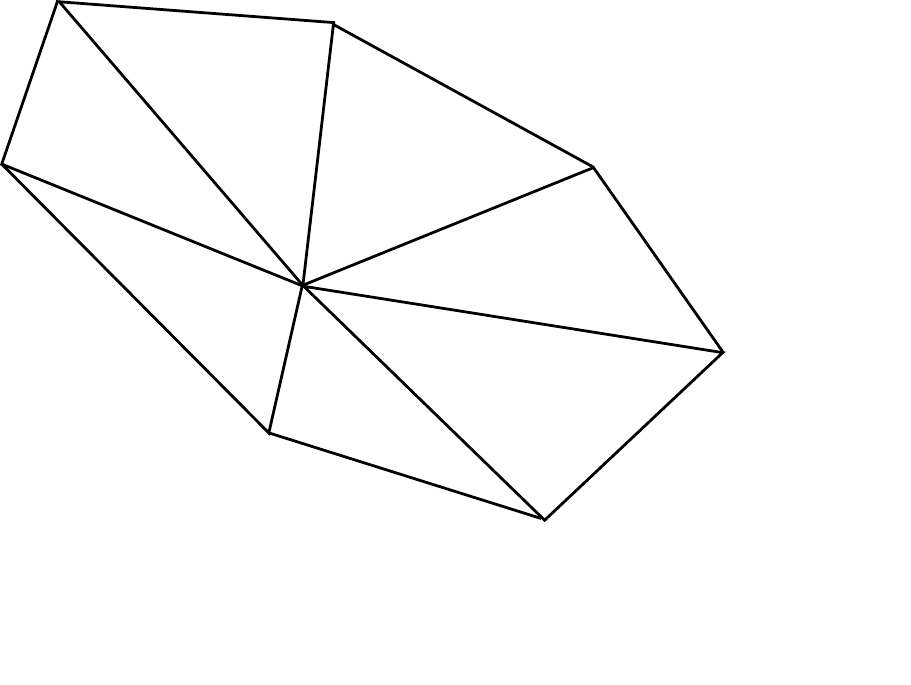
\caption{Schematic of the simply connected subset $K_{m}$ engulfing $K_1$}.\label{exh}
\end{figure}

Moreover, all the $K_n$, $n\leq m$, so constructed are simply connected and properly embedded in $\tilde M$ and $\tilde w_1,\tilde w_2,\tilde w_3$ are contained in the interior of $K_m$. We then mark all edges $\tilde w_1^m,\dotsc \tilde w_{n_m}^m$ of $\partial K_m$ and repeat the previous construction by first adding all the thickened cylinders sharing an edge with $\tilde w_1^m$ and so on.

	 This yields a collection $\set{K_n}_{n\in\mathbb N}$ of properly embedded nested simply connected subset of $\tilde M$ such that for all $n\in \mathbb N:\;\;K_{n+1}= K_n\cup T^{\alpha_{k_n}}_{i_n}\times\mathbb R$, for some $\alpha_{k_n}$, $i_n$.\epfc

Moreover, since each $T_i^\alpha\times\mathbb R$ is in $\cup_{n=1}^\infty K_n$ and the universal cover $\tilde M=\bigcup_{i=1}^m\coprod_{\alpha\in A_i} T_i^\alpha\times\mathbb R$ we have that $\tilde M=\cup_{n=1}^\infty K_n$. 	 \end{proof}
	
We now prove a key Lemma:
\begin{lemma}\label{cyl} Let $M$ be a Seifert-fibered space over $\Sigma$ with projection $p:M\rightarrow \Sigma$ and let $\bar\gamma$ be a link in $M$ such that $\bar\gamma$ projects injectively to a filling geodesic multi-curve $\gamma\subset \Sigma$. Let $\tau$ be a simple triangulation for $\gamma$ in $\Sigma$ and let $\set{\tilde{\bar\gamma}}$ be all the lifts in $\tilde M$ of the link $\overline\gamma.$ Then, for every $T\in \tau,$ we have that $\pi_1\left(\widetilde{p^{-1}(T)}\setminus\set{\tilde{\bar\gamma}}\right) \hspace{.2cm}$ is a free group. Moreover, the set of $\set{\tilde{\bar\gamma}}$-arcs in $\widetilde{p^{-1}(T)}$ forms a free basis.
\end{lemma}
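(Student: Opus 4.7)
The universal cover of the solid torus $V_T\eqdef p^{-1}(T)$ is $\widetilde{V_T}\cong \mathbb{D}^2\times\mathbb{R}$, so the statement concerns the complement of a family of arcs in $\mathbb{D}^2\times\mathbb{R}$. The simple-triangulation condition forces $\gamma\cap T$ to be either empty, a single arc crossing two edges of $T$, or two arcs meeting transversely at a single interior point. Because $\bar\gamma$ projects injectively to $\gamma$ away from self-intersections, $\bar\gamma\cap V_T$ is a disjoint union of at most two arcs, each a continuous section of $p|_{V_T}$ over the underlying $\gamma$-arc, with both endpoints on the annular faces of $\partial V_T$. Lifting to $\widetilde{V_T}$, the intersection $\set{\tilde{\bar\gamma}}\cap\widetilde{V_T}$ is a countable disjoint family of properly embedded compact arcs, each running between two vertical face-strips $\tilde F_i$ of $\partial\widetilde{V_T}$. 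Triangles with $\gamma\cap T=\emptyset$ give $\pi_1(\widetilde{V_T})=1$, which is free of rank zero, so from now on I assume at least one arc is present.

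The first real step is to move the $\bar\gamma$-arcs inside $V_T$ to a standard horizontal position. Since $T$ is contractible, the bundle $p|_{V_T}:V_T\to T$ is trivial, and I fix a trivialization $V_T\cong T\times \mathbb{S}^1$. A section of $p|_{V_T}$ over a $\gamma$-arc $a$ is the graph of a continuous map $f:a\to\mathbb{S}^1$, which is null-homotopic because $a$ is contractible and hence isotopic through sections to a constant $f\equiv t\in\mathbb{S}^1$. This produces an ambient isotopy of $V_T$ carrying $\bar\gamma\cap V_T$ to a disjoint union of at most two horizontal arcs $\alpha_j\times\{t_j\}$. In the two-arc case the two sections have distinct $\mathbb{S}^1$-values above the crossing point of the $\gamma$-arcs, since $\bar\gamma$ is an embedded link; one can first isotope one section to a constant height $t_1$ while avoiding a small neighborhood of the value of the other section, then isotope the second to a distinct height $t_2\neq t_1$, keeping them disjoint throughout. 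Lifting to $\widetilde{V_T}\cong\mathbb{D}^2\times\mathbb{R}$, the total preimage becomes the family of horizontal arcs $\alpha_j\times\{t_j+k\}$ for $k\in\mathbb{Z}$, all of which lie at pairwise distinct heights and are therefore separable by horizontal planes.

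The second step is to compute the fundamental group by exhaustion. For each $N\in\mathbb{N}$ let $B_N\eqdef\mathbb{D}^2\times[-N,N]$; this is a $3$-ball containing only finitely many of the horizontal arcs, each boundary-parallel, unknotted, and pairwise unlinked because the arcs lie at distinct heights. A standard iterated van Kampen computation, removing one arc at a time from the simply connected ball, gives $\pi_1\bigl(B_N\setminus\set{\tilde{\bar\gamma}}\bigr)$ free on the meridians of the arcs contained in $B_N$. Since the inclusion $B_N\hookrightarrow B_{N+1}$ sends each meridian to a meridian and the free basis for $B_N$ into the free basis for $B_{N+1}$, the direct limit $\pi_1\bigl(\widetilde{V_T}\setminus\set{\tilde{\bar\gamma}}\bigr)=\varinjlim_N \pi_1\bigl(B_N\setminus\set{\tilde{\bar\gamma}}\bigr)$ is free, with a free basis given by the meridians around the arcs of $\set{\tilde{\bar\gamma}}\cap\widetilde{V_T}$, yielding the lemma under the natural identification of each arc with a loop linking it once. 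The main obstacle I anticipate is the simultaneous isotopy in the two-arc case in the first step: one must verify that the two sections above crossing $\gamma$-arcs can be deformed through disjoint sections, which uses in an essential way that $\bar\gamma$ is embedded and hence the two sections take distinct fiber values over the crossing point.
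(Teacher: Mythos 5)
Your proposal is correct, and it shares the global skeleton of the paper's argument---trivialize $p$ over the triangle (both you and the paper implicitly use that there are no singular fibres over $T$, so that $p^{-1}(T)$ is a product), pass to the fiberwise universal cover $\mathbb{D}^2\times\mathbb{R}$, and assemble $\pi_1$ by van Kampen along simply connected pieces---but your local treatment of the crossing is genuinely different. The paper never straightens the link: it first constructs a section $\overline T$ of $p^{-1}(T)\to T$ disjoint from $\overline\gamma$ (pushing the strands off in the fibre direction), cuts $p^{-1}(T)$ along $\overline T$ to obtain a string diagram on $T$ with at most one crossing, and uses the Wirtinger presentation to see that the cut-open complement is free on the $\overline\gamma$-arcs; the component of $q^{-1}(p^{-1}(T))$ is then an infinite stack of these cut pieces glued along simply connected lifts of $\overline T$, and van Kampen gives the free product with the stated basis. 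You instead show that the tangle $\overline\gamma\cap p^{-1}(T)$ is split: both strands are sections over contractible arcs, and since disjointness only constrains their fibre values over the single crossing point, they can be levelled at distinct constant heights (indeed your flagged step goes through, and even more simply than you describe: perform each straightening keeping the fibre value over the crossing point fixed, so the two values never collide), after which the lifted arcs lie at pairwise distinct heights in $\mathbb{D}^2\times\mathbb{R}$ and your exhaustion by balls yields freeness with the meridian basis. Your route avoids presentations entirely and gives a slightly stronger geometric conclusion (each lifted arc is unknotted and the family is unlinked, essentially Corollary \ref{unk}, which the paper extracts separately), at the price of the isotopy-through-disjoint-sections verification; the paper's route avoids moving the link at all, with its disjoint section $\overline T$ playing the role that your horizontal levels play.
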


\begin{proof} We claim:
 \begin{claim*}For each $T\in \tau,$ there exists a smooth lift $\overline T$  of $T$ embedded in $p^{-1}(T)\subset M$ such that $\overline\gamma\cap \overline T=\emptyset.$
\end{claim*}

\begin{proof}
Suppose $\gamma_0$  and $\gamma_1$ are two $\gamma$-arcs contained in $T$ with unique intersection point $x\eqdef\gamma_1(t)=\gamma_0(s).$  Consider a new lift $\overline\gamma'_0$ of the arc $\gamma_0$ in $p^{-1}(T)$ which is at positive constant $\mathbb{S}^1$-fiber distance from $\overline\gamma_0$ and does not intersect $\overline\gamma_1.$ Let $\overline\gamma'_1$ be the unique lift passing through $y=\overline\gamma'_0\cap p^{-1}(x)$ and at  $\mathbb{S}^1$-fiber distance $d_{\mathbb{S}^1}(y,\overline\gamma_1(s))$ from $\overline\gamma_1.$ Consider any smooth lift $\overline T$ of $T$ which contains $\overline\gamma'_1\cup \overline\gamma'_0.$ As  $\overline\gamma'_1\cup \overline\gamma'_0$ does not intersect $\overline\gamma$ and the projection of $\overline T\setminus(\overline\gamma'_1\cup \overline\gamma'_0)$ under $p$ is disjoint from $\gamma\cap T$ thus,  $\overline\gamma\cap \overline T=\emptyset.$ The  case of only one $\gamma$-arc inside $T$ follows similarly from the previous case.
\end{proof}

By cutting $p^{-1}(T)$ along the lift $\overline T$ coming from the previous claim we can associate to $p^{-1}(T)\setminus ( \overline T \cup{\overline\gamma})$ a string diagram $D_T$ on $T$ such that $D_T$ has at most one self-crossing. Following Wirtinger, (\cite{Rol76}, Chap.3 Sec.D),  we can give a presentation of the fundamental group $\pi_1(p^{-1}(T)\setminus ( \overline T \cup{\overline\gamma}))$ using $D_T$ and show that: $\pi_1(p^{-1}(T)\setminus( \overline T \cup{\overline\gamma}))$ is a free group. Moreover, the generators are in bijection with the $\overline\gamma$-arcs inside $p^{-1}(T)\setminus \overline T$. Equivalently the bijection is with $\gamma$-arcs inside $T$.

Lastly, since $\widetilde{p^{-1}(T)}\setminus \set{\tilde{\bar\gamma}}$ is obtained by translating one lift of $p^{-1}(T)\setminus (\overline T \cup{\overline\gamma})$ in $\widetilde M$ under the fiber action, meaning that we are gluing consecutive lifts along the common lift of $\overline T$ inside each one of them, and each lift of $\overline T$ is simply connected, by the Van Kampen Theorem, we have that: $\pi_1\left(\widetilde{p^{-1}(T)}\setminus \set{\tilde\gamma'}\right)$ is a free product of free groups. Moreover, by the Van Kampen Theorem the generators are in a $1$-$1$ correspondence with the $\set{\tilde{\bar\gamma}}$-arcs inside $\widetilde{p^{-1}(T)}.$
\end{proof}
We can now prove:
\begin{reptheorem}{uc}Let $M$ be a Seifert-fibered space over a hyperbolic surface $\Sigma$. Let $\bar\gamma$ be a link in $M$ projecting injectively to a filling multi-curve $\gamma\subset \Sigma$ of closed geodesics. Let $q: \widetilde M \rightarrow M$ the universal covering map of $M$ and $\set{\tilde{\bar\gamma}}$ the total preimage of the link $\overline\gamma$ under $q.$ Then, the group $\pi_1\left(\widetilde M\setminus \set{\tilde{\bar\gamma}}\right)$ is free.
\end{reptheorem}

\begin{proof}[\bf{Proof}] 

By Lemma \ref{exsim}, let $\tau$ be a simple triangulation of $\Sigma$ and let $\cup_{i=1}^\infty K_n$ be the induced decomposition of $\tilde M$ coming from Lemma \ref{decomposition}. We define: $C_{T_i^\alpha}\eqdef T_i^\alpha\times\mathbb R \setminus \set{\tilde{\bar\gamma}}$ and let $ C_n \eqdef K_n \setminus \set{\tilde{\bar\gamma}}.$

\vspace{0.3cm}

\textbf{Claim:} For every $n\in \mathbb{N},$ we have that $\pi_1(C_n)$ is free. Moreover, the generators are in bijection with the $\set{\tilde{\bar\gamma}}$-arcs inside $C_n.$

\vspace{0.3cm}

\bpfc The proof is by induction over $n,$ where the base case is Lemma \ref{cyl}. Suppose that the claim is true for $C_n$. We will show it is also true for $C_{n+1}=C_{n}\cup _{Z_n} C_{T_j^\alpha}.$ By Lemma \ref{decomposition} the intersection of $C_n$ with $C_{T_j^\alpha}$ is either one or two punctured faces $Z_n=S_n \setminus \set{\tilde{\bar\gamma}}$ such that each puncture comes from a subset of $\set{\tilde{\bar\gamma}}$-arcs inside $C_n$ and the same holds for $C_{T_j^\alpha}.$ The natural inclusions:
$$(i_1)_*: \pi_1\left(C_{T_j^\alpha}\cap C_n\right)\rightarrow \pi_1\left(C_{T_j^\alpha}\right)\hspace{.2cm}  \mbox{and} \hspace{.2cm} (i_2)_*: \pi_1\left(C_{T_j^\alpha}\cap C_n\right)\rightarrow \pi_1(C_n)  $$
map generators to generators. Thus, by Van Kampen's Theorem $\pi_1(C_{n+1})$ is also free because the new relations are given by:
$$(i_1)_*(s)((i_2)_*(s))^{-1}=1$$
where $s$ is a generating element of $\pi_1\left(C_{T_j^\alpha}\cap C_n\right)$. Thus, we are just pairing the generators of the two free groups. Therefore, the new relations either rename the generators of $\pi_1\left(C_{T_j^\alpha}\right)$ with generators of $\pi_1(C_n)$ or reduce the number of generators of $\pi_1(C_n).$ \epfc

 By the previous claim each $\pi_1(C_n) $ is free and the inclusion $j_n: C_n \rightarrow C_{n+1}$ induce maps $(j_n)_*:\pi_1(C_n)\rightarrow \pi_1(C_{n+1})$ mapping generators to generators. Therefore, the free basis of $C_n$ is extended to a free basis of $C_{n+1}$, thus: 
$$ \varinjlim_{(j_n)_*} \pi_1(C_n)\cong \pi_1\left(\widetilde M\setminus \set{\tilde{\bar\gamma}}\right)$$
is a free group as well. Moreover, the set $\set{\tilde{\bar\gamma}}$ forms a generating set for $\pi_1(\tilde M\setminus\set{\tilde{\bar\gamma}})$.
\end{proof}

We say that a properly embedded arc $\alpha$ in $\mathbb R^3$ is \emph{unknotted} if for any thickened cylinder $V$ such that $\alpha\subset \text{int}(V)$ we have that $\partial V$ is isotopic to $\partial N_\epsilon(\alpha)$ in $V$. As a consequence of the previous proof we obtain:
\begin{corollary}\label{unk}
Given a component $\alpha\in\set{\tilde{\bar\gamma}}$ then $\alpha$ is unknotted in $\widetilde M\cong\mathbb R^2\times\mathbb R$.
\end{corollary}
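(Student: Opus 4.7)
The plan is to identify the arc $\alpha$ explicitly as the graph of a continuous section of the trivial line bundle $\tilde M \to \mathbb H^2$ over a complete geodesic line in $\mathbb H^2$, after which unknottedness follows essentially tautologically.

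After the reduction in Proposition~\ref{c} we may assume the base is a hyperbolic surface $\Sigma$, so $M$ is an orientable $\mathbb S^1$-bundle over $\Sigma$ and its universal cover $\tilde M$ is an $\mathbb R$-bundle over the contractible space $\mathbb H^2$. Such an $\mathbb R$-bundle is trivial, giving $\tilde M \cong \mathbb H^2 \times \mathbb R \cong \mathbb R^2 \times \mathbb R$ with the bundle projection $\tilde p : \tilde M \to \mathbb H^2$ as the first factor. Let $\alpha$ be a lift of a component $\bar\gamma_i$ of $\bar\gamma$. Since $\bar\gamma_i \to \gamma_i$ is injective and the self-intersections of $\gamma_i$ are resolved in the universal cover, the restriction of $\tilde p$ to $\alpha$ is a continuous bijection of the line $\alpha$ onto a complete geodesic lift $\ell \subset \mathbb H^2$ of $\gamma_i$. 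A continuous bijection $\mathbb R \to \mathbb R$ is automatically a homeomorphism (strictly monotonic by the intermediate value theorem), so $\alpha$ is the graph of a continuous section $f : \ell \to \mathbb R$ inside the totally geodesic plane $\ell \times \mathbb R \subset \tilde M$.

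From here unknottedness is essentially tautological: the vertical interpolation $\alpha_t \eqdef \set{(x,(1-t)f(x)) : x \in \ell}$ is an ambient isotopy of $\tilde M \cong \mathbb R^3$ taking $\alpha$ to the straight line $\ell \times \set 0$. Given any thickened cylinder $V$ with $\alpha \subset \mathrm{int}(V)$, multiplying the vertical scaling by a bump function supported in a smaller thickened cylinder $V' \subset V$ containing $\alpha$ in its interior produces an ambient isotopy of $V$ which fixes $\partial V$ and takes $\alpha$ to a straight line. For a straight line, $\partial V$ is obviously isotopic inside $V$ to $\partial N_\varepsilon(\alpha)$ by radial contraction along the natural cylindrical frame, verifying the definition stated before the corollary.

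The main obstacle is the existence of this intermediate thickened cylinder $V'$ separating $\alpha$ from $\partial V$ inside $V$, equivalently a product structure on $V \setminus \mathrm{int}(N_\varepsilon(\alpha))$. This is where the proof of Theorem~\ref{uc} is genuinely used: applying the same Van Kampen exhaustion with the total preimage $\set{\tilde{\bar\gamma}}$ replaced by the single arc $\alpha$ yields $\pi_1(\tilde M \setminus \alpha) \cong \mathbb Z$, because the connectedness of $\alpha$ forces all free generators produced by Lemma~\ref{cyl} to be identified with a single meridian. Combined with the bundle's product framing over $\ell$, this provides the cylindrical product structure on $V \setminus \mathrm{int}(N_\varepsilon(\alpha))$ needed to carry out the radial contraction and close the argument.
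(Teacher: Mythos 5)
Your first paragraph is a correct and rather nice observation: since $\bar\gamma_i$ projects injectively to $\gamma_i$ away from double points, a lift $\alpha$ does project bijectively to a single complete geodesic $\ell\subset\mathbb H^2$, and the trivialization $\widetilde M\cong\mathbb H^2\times\mathbb R$ exhibits $\alpha$ as the graph of a continuous section over $\ell$; this gives that the pair $(\widetilde M,\alpha)$ is standard, by a more geometric route than the paper's (the paper extracts the corollary from the Wirtinger/exhaustion analysis of Lemma \ref{cyl} and the proof of Theorem \ref{uc}, which exhibits each lift as a trivial strand in every thick cylinder with meridian a free generator). The problem is that this is not yet the statement of Corollary \ref{unk}: unknottedness as defined quantifies over \emph{every} thickened cylinder $V$ with $\alpha\subset\mathrm{int}(V)$, and "globally straight in $\widetilde M$" does not formally imply "core of every cylinder containing it." Your second paragraph already assumes what has to be proved: the straightening isotopy $\alpha_t=\set{(x,(1-t)f(x))}$ has no reason to be supported in $V$, and the existence of an intermediate cylinder $V'\subset V$ with a product region between $\partial V'$ and $\partial V$ is essentially equivalent to the conclusion.

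The attempted repair in your last paragraph does not close this gap. First, $\pi_1(\widetilde M\setminus\alpha)\cong\mathbb Z$ (which one can plausibly extract from Lemma \ref{cyl} as you suggest) says nothing directly about $\pi_1(V\setminus\alpha)$ for an arbitrary thickened cylinder $V$, since the inclusion $V\setminus\alpha\hookrightarrow\widetilde M\setminus\alpha$ need not be $\pi_1$-injective; knotting of $\alpha$ relative to $V$ could in principle be invisible in the ambient complement. Second, even the correct statement $\pi_1(V\setminus\alpha)\cong\mathbb Z$ would not by itself produce the required proper product structure on $V\setminus\mathrm{int}\,N_\epsilon(\alpha)$: this is a noncompact manifold, and passing from homotopy information to boundary-parallelism needs an actual topological argument (irreducibility, incompressibility of the two boundary cylinders, and a proper, noncompact version of the product/Waldhausen theorem, or else an equivariance argument using the infinite cyclic deck group stabilizing $\alpha$ and acting cocompactly, which reduces to a compact statement about the solid torus $V\subset M$ downstairs). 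The phrase "combined with the bundle's product framing over $\ell$" is doing all the work here and is not an argument. So the heart of the corollary -- the passage from the standard position of $\alpha$ in $\widetilde M$ (or the freeness statement of Theorem \ref{uc}) to the universally quantified boundary-parallelism inside an arbitrary $V$ -- remains unproved in your proposal.
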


We can now show:

\begin{reptheorem}{sfsurface}
Suppose $\Sigma$ is a hyperbolic surface and $\bar\gamma$ is a link in an orientable Seifert-fibered space $M$ over $\Sigma$ projecting injectively to a filling multi-curve $\gamma$ of closed geodesics in $\Sigma.$ Then, the complement of $\overline\gamma$ in $M,$ denoted by $M_{\bar\gamma},$ is a hyperbolic manifold of finite volume.
\end{reptheorem}

\begin{proof} By Thurston geometrization Theorem \cite{Thu82} it suffices to show that $M_{\bar\gamma}$ is atoroidal, irreducible and with infinite $\pi_1$. The last two claims follow from standard arguments coming from $3$-dim topology using the fact that $\gamma\neq 0$ in $\pi_1(\Sigma)$ and that $\Sigma$ is not a $2-$sphere, respectively. Thus, since $M_{\bar\gamma}$ is irreducible and $\pi_1(M_{\bar\gamma})$ is infinite we only need to prove the atoroidality condition. The proof involves three cases, each of which will be proven by contradiction. Let $T\subset M_{\bar\gamma}$ be an incompressible torus not parallel to the boundary of $M_{\bar\gamma}$. Then, for $\iota:M_{\bar\gamma}\hookrightarrow M$ the subgroup $\pi_1(\iota(T))$ has either rank zero, one or two in $\pi_1(M)$. 
\vskip .2cm
\textbf{Case 1}: The rank of $\pi_1(\iota(T))$ is zero.
\vskip .2cm
This means that $\iota(T)$ is null-homotopic in $M$. Hence, the map: $\iota:T\rightarrow M$ lifts to an embedded torus $\tilde T$ in $\tilde M$. Moreover, for $\set{\tilde{\bar\gamma}}$ the lifts of $\tilde\gamma\subset M$ we get that $\tilde T\subset \tilde M\setminus\set{\tilde{\bar\gamma}}$ is essential. However, by Theorem \ref{uc} $\pi_1(\tilde M\setminus\set{\tilde{\bar\gamma}})$ is a free group which does not contain any $\mathbb Z^2$ subgroup, giving us a contradiction $\Rightarrow\Leftarrow$.
\vskip .2cm
\textbf{Case 2}: The rank of $\pi_1(\iota(T))$ is one.
\vskip .2cm
If $\text{rank}(\pi_1(\iota(T))=1$ it means that $\iota(T)$ is compressible in $M$. Therefore we have a compression disk $D$ such that compressing $\iota(T)$ along $D$ gives us a 2-sphere $\mathbb S^2\hookrightarrow M$. Since $M$ is irreducible it means that $\mathbb S^2$ bounds a 3-ball $B\subset M$. Thus, we see that $\iota(T)$ bounds a solid torus $V$ in $M$ and by incompressibility of $T$ in $M_{\bar\gamma}$ we must have that $\bar\gamma\cap V\neq\emptyset
$. Since $T\cap \bar\gamma=\emptyset$ we have that every component $\bar\gamma_i\in\pi_0(\bar\gamma)$ intersecting $V$ is contained in $V$.

\vspace{0.3cm}

\textbf{Claim:} There is a unique component $\bar\gamma_i\in\pi_0(\bar\gamma)$ contained in $V$ and it is a generator of $\pi_1(V)$.

\vspace{0.3cm}

\bpfc Let $\alpha$ be a generator of $\pi_1(V)$ in $\pi_1(M)$. Then every component $\bar\gamma_i\subset V$ of $\bar\gamma$ is homotopic, in $V$, to $\alpha^{n_i}$ for some $n_i\in\mathbb N$. But every $\bar\gamma_i$ is the lift of a geodesic in $\Sigma$ and so it is primitive. Hence, every $\bar\gamma_i\subset V$ generates $\pi_1(V)$. Thus, any two $\bar\gamma_i,\bar\gamma_j$ in $V$ must be homotopic contradicting the fact that $\bar\gamma$ projects injectively to a geodesic multi-curve on $\Sigma.$ Thus there is a unique component $\bar\eta\in\pi_0(\bar\gamma)$ contained in $V$ and $[\bar\eta]$ generates $\pi_1(V)$. \epfc

\vspace{0.3cm}

\textbf{Claim:} The torus $T$ is boundary parallel in $M_{\bar\gamma}$.

\vspace{0.3cm}

\bpfc Consider a lift $\widetilde V$ of $V$ in $\widetilde M$. Then $\tilde V$ is homeomorphic to $\mathbb D^2\times \mathbb R$ and it contains $\tilde{\bar\eta}$. If $\widetilde V$ is not boundary parallel in $\tilde M\setminus\set{\tilde{\bar\gamma}}$ we have that the lift $\tilde{\bar\eta}$ is knotted in $\tilde V$ contradicting Corollary \ref{unk}. Therefore, the infinite cylinder $\partial \tilde V$ is isotopic into $\partial N_\epsilon(\tilde{\bar\eta})$. Thus, $\pi_1(T)$ is conjugated into $\pi_1(\partial N_\epsilon (\bar\eta))$ contradicting the fact that $T$ was not parallel to the boundary of $M_{\bar\gamma}$  $\Rightarrow\Leftarrow$.\epfc    

\vspace{0.3cm}

\textbf{Case 3}: The rank of $\pi_1(\iota(T))$ is two.

\vskip .2cm

If $\iota(T)$ is essential in $M$, by Proposition \cite[1.11]{Ha}, we must have that $\iota(T)$ is isotopic to either a horizontal surface or a vertical surface in $M$. If $\iota(T)$ is horizontal it means that the hyperbolic surface $\Sigma$ is covered by a torus which is impossible. Therefore, $\iota(T)$ is isotopic to a vertical torus $T'$. Then if we consider the projection $p:M\rightarrow \Sigma$ we see that $p(T')$ is an essential simple closed curve $\alpha\subset\Sigma$. Moreover, since $T'\cap \tilde\gamma=\emptyset$ we have that $\alpha\cap\gamma=\emptyset$. However, this contradicts the fact that $\gamma$ is a filling multi-curve, giving us a contradiction $\Rightarrow\Leftarrow$.

\vskip .2cm

Thus, $M_{\bar\gamma}$ is atoroidal and hence admits a complete hyperbolic metric of finite volume.\end{proof}
\section{Volume of $M_{\bar\gamma}$}\label{geoinv}

Once the hyperbolicity of $M_{\bar\gamma}$ is settled then by Mostow's Rigidity we can pursue the problem of estimating geometric invariants in terms of topological relations between  the multi-curve $\gamma$ and the hyperbolic orbifold $\mathcal O.$

Specifically we will show our volume upper bounds in terms of self-intersection and extend the lower bound of the second author. Moreover, we will also construct continuous lifts inside the  projective unit tangent bundle of a punctured hyperbolic surface over some closed geodesics such that the knot complement's hyperbolic volume is, up to a multiplicative factor, given by the self-intersection number of the geodesic multi-curve.

\subsection{Lifts $\bar\gamma$ whose volume complement is linear in $\iota(\gamma.\gamma)$}

Recall the following definition:

\begin{definition}
Given a connected, orientable 3-manifold $M$ with boundary we let $S_k(M;\mathbb{R})$ be the \textit{singular chain complex} of $M.$ That is, $S_k(M;\mathbb{R})$ is the set of formal linear combination of $k$-simplices, and we set as usual $S_k(M, \partial M;\mathbb{R})= S_k(M;\mathbb{R})/S_k(\partial M;\mathbb{R})$. We denote by $\|c \|$ the $l_1$-norm of  the $k$-chain $c$. If $\alpha$ is a homology class in $H^{sing}_k(M, \partial M;\mathbb{R})$, the \textit{Gromov norm of $\alpha$}  is defined as
$$\|\alpha\|=\inf_{[c]=\alpha} \{ \|c \|=\sum_{\sigma}|r_\sigma|\hspace{.1cm}\mbox{such that}\hspace{.1cm} c=\sum_{\sigma}r_\sigma \sigma\}.$$
The \textit{simplicial volume} of $M$ is the Gromov norm of the fundamental class of $(M,\partial M)$ in  $H^{sing}_3(M,\partial M;\mathbb{R})$ and is denoted by $\|M\|.$ In the special case in which $\partial M$ has only tori boundary components there is another similar definition of $\| M\|_0$ which coincides with $\| M\|$ whenever $M$ is hyperbolic and has the property that for any Seifert-fiebered space $N$: $\| N\|_0=0$.
\end{definition}
and the following results:
\begin{proposition}{\cite[6.5.2]{Th1978}} Let $(M,\partial M)$ be a compact, orientable 3-manifold with $\partial M$ consisting of tori. If $(N,\partial N)$ is obtained from $M$ by gluing pairs of tori in $\partial M$ then:
$$\|N\|_0\leq \| M\|_0$$
\end{proposition}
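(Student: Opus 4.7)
The plan is to exhibit, for every $\varepsilon > 0$, a relative fundamental cycle for $(N, \partial N)$ of $l_1$-norm at most $\|M\|_0 + \varepsilon$, built from a relative fundamental cycle of $(M, \partial M)$ by pushing forward along the quotient map $\pi \colon M \twoheadrightarrow N$ that performs the torus gluings.

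First I would choose a relative fundamental cycle $c \in S_3(M)$ representing $[M, \partial M]$ with $\|c\| < \|M\|_0 + \varepsilon/2$, adjusted so that for every boundary torus $T_i \subset \partial M$ the $l_1$-norm of the restriction $\partial c|_{T_i}$ is smaller than any prescribed positive number. This adjustment is available because the fundamental class of $T^2$ has vanishing Gromov semi-norm, so every 2-cycle on $T^2$ is homologous to one of arbitrarily small norm, and the homology can be realized by a 3-chain sitting in a collar of $\partial M$ whose own $l_1$-norm is tightly controlled. Next, push $c$ forward to $S_3(N)$: the boundary $\pi_*(\partial c)$ decomposes as a piece supported on $\partial N$ plus a 2-cycle $b_T$ supported on the image torus $T \subset \mathrm{int}(N)$ of the glued pairs, with $\|b_T\|$ as small as we like.

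To turn $\pi_*(c)$ into a genuine relative fundamental cycle for $(N, \partial N)$ I need to cap off $b_T$ by a 3-chain $d$ with $\partial d = b_T$, $d$ supported in a collar neighborhood $T \times (-1,1) \subset N$, and $\|d\| < \varepsilon/2$. The resulting chain $c' = \pi_*(c) - d$ then has boundary in $\partial N$, represents $[N, \partial N]$, and satisfies $\|c'\| \leq \|c\| + \|d\| < \|M\|_0 + \varepsilon$; letting $\varepsilon \to 0$ yields $\|N\|_0 \leq \|M\|_0$.

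The main obstacle is producing the efficient cap $d$: a naive cone construction would blow up the $l_1$-norm. What saves us is that the collar $T \times I$ is Seifert-fibered, hence satisfies $\|T \times I\|_0 = 0$, as noted in the definition of $\|\cdot\|_0$ recalled above. Concretely one invokes the amenability of $\pi_1(T) = \mathbb{Z}^2$ via an averaging or telescoping construction over deeper and deeper finite covers of the torus, building a null-homology of $b_T$ whose mass can be made arbitrarily small. This is essentially Gromov's classical vanishing argument for simplicial volume in the presence of an $S^1$-action, and is the only non-formal input in the proof.
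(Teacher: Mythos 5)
The paper never proves this statement: it is quoted directly from Thurston's notes (6.5.2), so there is no in-paper argument to compare against. Your proposal is essentially the standard proof behind that citation --- push forward nearly minimal relative cycles, and repair the mismatch on the glued tori using amenability of $\pi_1(T^2)=\mathbb{Z}^2$ --- and the overall strategy is sound, but two steps need repair.

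First, your opening ``adjustment'' is exactly where the content of the seminorm $\|\cdot\|_0$ lives, and as written it has a gap. Thurston defines $\|M\|_0$ (for tori boundary) so that, by definition, for every $a,\varepsilon>0$ there is a relative fundamental cycle $c$ with $\|c\|<\|M\|_0+\varepsilon$ \emph{and} $\|\partial c\|<a$; so no adjustment is needed if you invoke the definition. If instead you start from a near-minimizer and try to shrink its boundary, the correcting $3$-chain in the collar has norm controlled only by a constant times the (possibly large) norm of $\partial c|_{T_i}$, not by an arbitrarily small quantity, so the corrected cycle need not have norm close to $\|M\|_0$ anymore; making that route work amounts to proving $\|M,\partial M\|=\|M\|_0$ for amenable boundary, which is a harder theorem than the one you are proving. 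Second, the capping step: what you need is not the vanishing $\|T^2\|=0$ nor the $S^1$-action vanishing theorem, but the fact that $b_T$ is null-homologous in $T$ (because the two glued boundary restrictions carry opposite fundamental classes of $T$ --- this should be said explicitly, since otherwise no $d$ with $\partial d=b_T$ exists) together with a uniform boundary condition for the torus: every null-homologous $2$-cycle $z$ on $T^2$ bounds a $3$-chain of norm at most $K\|z\|$ for a universal $K$, which follows from amenability (Matsumoto--Morita) or from an explicit transfer argument over finite covers, as you gesture at. With $\|b_T\|$ already small by the choice of $c$, this yields the small cap and the rest of your argument goes through.
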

\begin{lemma}{\cite[6.5.4]{Th1978}} Let $M$ be a complete hyperbolic manifold of finite volume. Then, $v_3\|M\|=v_3\| M\|_0=\Vol(M)$.
\end{lemma}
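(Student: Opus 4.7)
The plan is to establish the sandwich $v_3\|M\|_0\leq \Vol(M)\leq v_3\|M\|$ together with the comparison $\|M\|\leq \|M\|_0$, which coupled with the previous proposition forces equality throughout.

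For the upper bound $\Vol(M)\leq v_3\|M\|$ I would use the straightening operator. Given any relative cycle $c=\sum r_\sigma\sigma$ representing $[M,\partial M]\in H^{sing}_3(M,\partial M;\mathbb R)$, lift each singular simplex to the universal cover $\widetilde M=\mathbb H^3$ and replace it by the unique straight geodesic simplex with the same ordered vertices; a straight-line chain homotopy shows that the straightened chain $\mathrm{str}(c)$ is homologous to $c$ in $(M,\partial M)$. By the classical Milnor--Haagerup bound, every geodesic tetrahedron in $\mathbb H^3$ has volume at most $v_3$, with equality only for the regular ideal tetrahedron. Integrating the hyperbolic volume form against $\mathrm{str}(c)$ therefore yields
\[
\Vol(M)=\Bigl|\sum_\sigma r_\sigma\int_{\Delta^3}\mathrm{str}(\sigma)^*\omega\Bigr|\leq v_3\,\|c\|,
\]
and taking the infimum over representatives gives the inequality.

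For the lower bound $v_3\|M\|_0\leq \Vol(M)$ I would run the Gromov--Thurston smearing construction. Fix a near-regular geodesic tetrahedron $\Delta_\varepsilon\subset\mathbb H^3$ of volume $v_3-\varepsilon$, obtained by truncating the vertices of a regular ideal tetrahedron. Smear $\Delta_\varepsilon$ against the normalized Haar measure on $\Isom^+(\mathbb H^3)/\Gamma$ (where $M=\mathbb H^3/\Gamma$) and symmetrize to produce a real $3$-chain $c_\varepsilon$ in $M$ whose boundary lies in arbitrarily small horospherical neighbourhoods of the cusps. Rescaling $c_\varepsilon$ so that its class represents $[M,\partial M]$ one finds $\|c_\varepsilon\|\to \Vol(M)/v_3$ as $\varepsilon\to 0$; because the construction requires one to absorb the residual boundary into the torus ends rather than into a genuine chain in $(M,\partial M)$, this bound is naturally realised in the $\|\cdot\|_0$ framework rather than in the stricter $\|\cdot\|$.

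Finally, for the comparison $\|M\|\leq \|M\|_0$, I would use that $\partial M$ is a disjoint union of tori and hence has amenable fundamental group. By Gromov's equivalence theorem, relative cycles whose boundary is supported in subsurfaces with amenable fundamental group can be approximated by honest relative cycles in $(M,\partial M)$ without strictly increasing the $\ell_1$-norm in the limit, giving the reverse inequality. The main obstacle is the smearing step: one must verify that truncating the averaged chain against horospheres and reassembling it into a relative cycle costs arbitrarily little $\ell_1$-norm once the truncation is pushed deep into the cusps, which requires a careful thick/thin decomposition of $M$. The remainder of the argument, including the chain homotopy for straightening and the amenable approximation, is essentially bookkeeping around these two geometric ingredients.
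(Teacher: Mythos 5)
This lemma is not proved in the paper at all --- it is quoted from Thurston's notes (6.5.4) --- and your sketch (straightening for $\Vol(M)\le v_3\|M\|$, Gromov--Thurston smearing truncated deep in the cusps for $v_3\|M\|_0\le \Vol(M)$, plus the norm comparison) is precisely the standard argument from that source, and it is correct at the level of detail a sketch can carry. One simplification: since $\|\cdot\|_0$ is an infimum over a restricted class of relative fundamental cycles (those with small boundary norm), the inequality $\|M\|\le\|M\|_0$ is immediate from the definition, so neither Gromov's equivalence theorem nor the ``previous proposition'' on gluing is actually needed to close the chain $v_3\|M\|\le v_3\|M\|_0\le\Vol(M)\le v_3\|M\|$.
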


Then, we have:

\begin{theorem}\label{lbsi}
Let $\Sigma_{g,n}$ be a hyperbolic surface and $M\cong \Sigma_{g,n} \times \mathbb S^1,$  then there exists a sequence of  filling closed geodesic $\{\gamma_n\}_{n\in\mathbb N}$ with $i(\gamma_n,\gamma_n)\nearrow \infty,$ and respective lifts $\{\overline{\gamma_n}\}_{n\in\mathbb N}$ in  $M$ such that,
$$\frac{v_8}{2}(i(\gamma_n,\gamma_n)-(2-2g))\leq\Vol(M_{\overline{\gamma_n}}),$$
where $v_8$ is the volume of the regular ideal octahedron and $i(\gamma_n,\gamma_n)$ the self-intersection number of $\gamma_n.$
 \end{theorem}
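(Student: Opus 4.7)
The plan is to exhibit an explicit \emph{alternating} lift $\bar\gamma_n$ of $\gamma_n$ in $M\cong \Sigma_{g,n}\times\mathbb{S}^1$ and then invoke the octahedral volume lower bound of \cite{HP2018} for alternating links on surfaces. First I would choose the sequence of geodesics: fix a pseudo-Anosov homeomorphism $\phi$ of $\Sigma_{g,n}$ together with a filling closed geodesic $\alpha$, and set $\gamma_n\eqdef\phi^n(\alpha)$. Standard train-track growth estimates then imply that every $\gamma_n$ is filling and that $i(\gamma_n,\gamma_n)\nearrow\infty$.

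Next I would construct the lift. The geodesic $\gamma_n$ cuts $\Sigma_{g,n}$ into a $4$-valent graph $G_n$. At each self-intersection of $\gamma_n$ choose an ``over'' strand and an ``under'' strand in an alternating fashion (so that, going around each complementary face of $G_n$, over and under alternate); this amounts to a $2$-coloring of the edges of $G_n$ and can always be arranged. Now lift the ``over''-arcs to height $1/2$ and the ``under''-arcs to height $0$ in the $\mathbb{S}^1$-factor of $M$, smoothly interpolating in small neighbourhoods of each crossing. The resulting $\bar\gamma_n$ is a smooth embedded link in $M$ whose projection to $\Sigma_{g,n}$ is $\gamma_n$ and is injective away from self-intersections, hence satisfies the hypotheses of the paper. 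By construction, the diagram of $\bar\gamma_n$ in $\Sigma_{g,n}$ is alternating with $c(\bar\gamma_n)=i(\gamma_n,\gamma_n)$ crossings; it is reduced because a geodesic has no nugatory self-intersections, and prime because $\gamma_n$ is filling.

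Finally I would apply \cite[Thm.~1.1]{HP2018}, which asserts that for a prime, reduced, alternating link $L$ on a closed base surface $\Sigma_g$ whose complement in the thickened surface is hyperbolic, one has
\[
\Vol(M_L) \;\geq\; \frac{v_8}{2}\bigl(c(L) - (2-2g)\bigr),
\]
the $v_8$ coming from a decomposition that assigns one regular ideal octahedron to each crossing and the $(2-2g)$ correction accounting for the Euler characteristic of the base. Hyperbolicity of $M_{\bar\gamma_n}$ is already supplied by Theorem~\ref{sf}, so the bound applies; combining it with $c(\bar\gamma_n)=i(\gamma_n,\gamma_n)$ yields the required inequality, and letting $n\to\infty$ produces the desired sequence. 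The main obstacle I expect is verifying that the alternating lift of a filling geodesic genuinely satisfies the hypotheses of the Howie--Purcell bound, in particular that its checkerboard surfaces in $M_{\bar\gamma_n}$ are essential; this should follow from the filling condition together with arguments analogous to the incompressibility analysis already carried out in the proof of Theorem~\ref{sfsurface}.
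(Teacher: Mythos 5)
Your proposal has two genuine gaps, either of which is fatal as written. First, the sequence $\gamma_n\eqdef\phi^n(\alpha)$ does not do what you claim: the self-intersection number of a free homotopy class is a topological invariant, so $i(\phi^n(\alpha),\phi^n(\alpha))=i(\alpha,\alpha)$ for all $n$. Iterating a pseudo-Anosov makes the length blow up but leaves $i(\gamma_n,\gamma_n)$ constant, so your sequence never satisfies $i(\gamma_n,\gamma_n)\nearrow\infty$. The paper avoids this by twisting only \emph{part} of the curve: it fixes a simple closed geodesic $s$ and Dehn-twists one of the two arcs forming $\gamma_1$ along $s$, which genuinely increases the self-intersection number. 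Second, your key claim that an alternating over/under assignment ``can always be arranged'' at the crossings of the $4$-valent graph $G_n$ is false on surfaces of positive genus: an alternating diagram on a surface is checkerboard-colorable, and a generic filling geodesic diagram on $\Sigma_{g,n}$ need not be (the paper points this out explicitly, with a counterexample in Figure \ref{alt3}). This is precisely why the paper does not take an arbitrary filling sequence but constructs special ones -- twists along a simple closed curve on $\Sigma_{1,1}$, puncture addition/removal, and the $\bar\star$ surgery with an explicit crossing-change argument in the induction on genus -- for which an alternating diagram can be exhibited by hand.

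There is also a missing step between ``I have an alternating diagram'' and the stated inequality. The bound of \cite[Thm.~1.1]{HP2018} that produces the term $-\chi(\Sigma_g)=2g-2$ is applied in the paper to a diagram on a \emph{closed} projection surface: one first performs the trivial Dehn filling of the boundary tori of $\Sigma_{g,n}\times\mathbb S^1$ coming from the punctures, and then transfers the resulting lower bound for $\Vol(N_\varepsilon(S_g)\setminus\bar\gamma_n)$ back to $\Vol(M_{\bar\gamma_n})$ via Gromov-norm monotonicity under gluing and splitting along tori (\cite[6.5.2, 6.5.4]{Th1978}). Your proposal applies the closed-surface statement directly to the diagram on the punctured surface $\Sigma_{g,n}$, whose Euler characteristic is $2-2g-n$, so the $(2-2g)$ term is not accounted for, nor is the passage from the thickened-surface complement to $M_{\bar\gamma_n}$. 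Finally, the Howie--Purcell bound is stated in terms of the twist number, not the crossing number; you would need the paper's observation that for geodesics in minimal position there are no bigons, so the twist number equals $i(\gamma_n,\gamma_n)$. None of these latter points is hard to repair, but the two issues in the first paragraph require replacing your construction of the sequence and of the lift altogether.
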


\begin{proof}
For the sake of concreteness, we will  first prove the result for the once-punctured torus $\Sigma_{1,1}.$  Let $\gamma_n$ be constructed as in Figure \ref{alt1}. That is, fix a simple closed geodesic $s$ on $\Sigma_{1,1},$ and pick two distinct points $p_1,p_2$ on $s.$  Let $\alpha_1$, $\beta_1$ be two essential arcs linking in $\Sigma_{1,1}\setminus s$ linking $p_1$ with $p_2$ and such that $\iota(\alpha_1,\beta_1)=1$. Note, that $\alpha_1\cup\beta_1$ gives us an essential loop in $\Sigma_{1,1}$. Let $\gamma_1$ be the closed geodesic representative of $\alpha_1\cup\beta_1$. Then, $\gamma_n$ is obtained from $\gamma_1$ by Dehn-twisting $\beta_1$ $2n$-times along $s.$

\begin{figure}[h!]
\centering
\includegraphics[scale=0.6
] {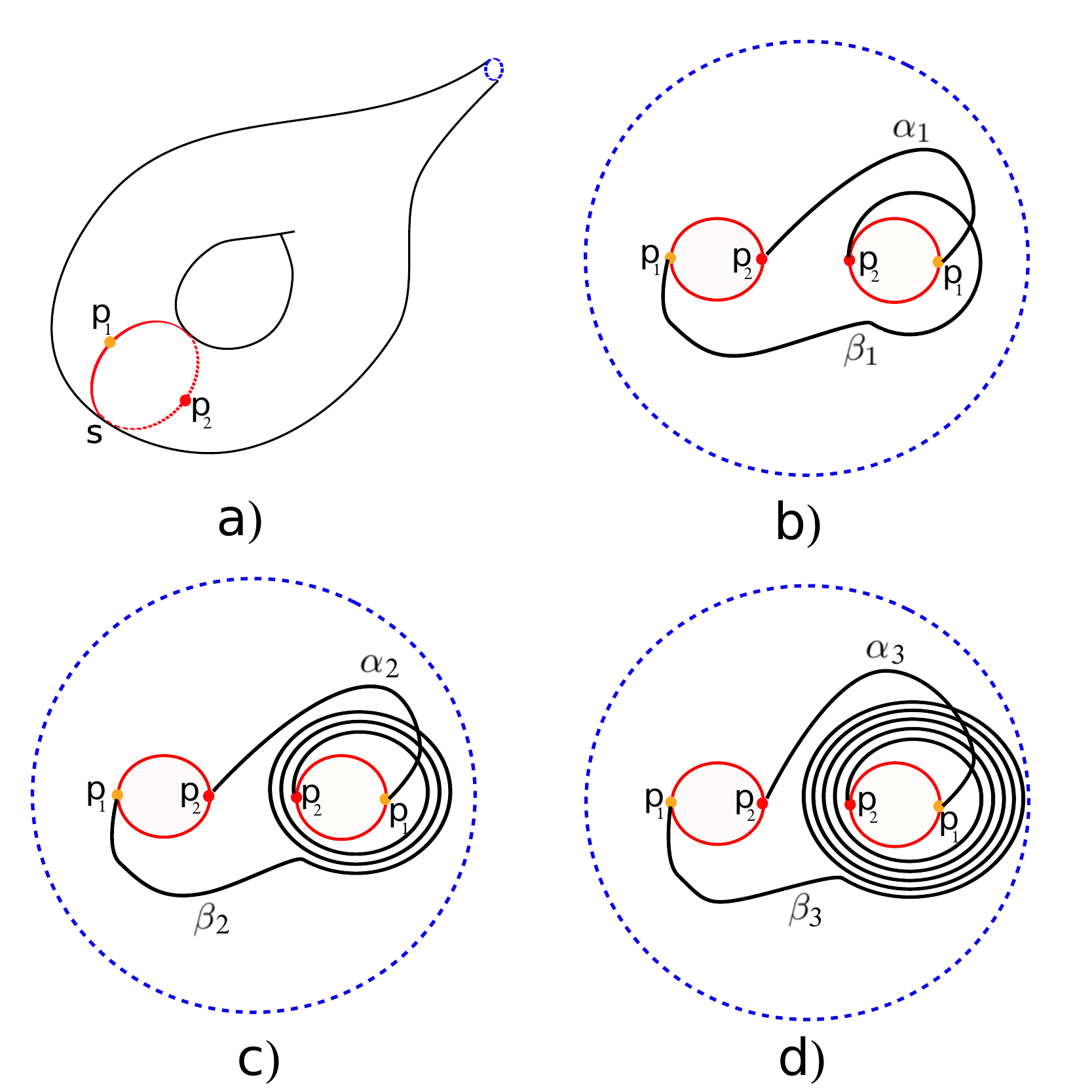}
\caption{a)$s$ on $\Sigma_{1,1},$ b) $\gamma_1,$ c) $\gamma_2,$ and d) $\gamma_3.$
}\label{alt1}
\end{figure}

Since $M\cong \Sigma_{1,1} \times \mathbb S^1,$ consider a global section $S_{1,1}$ embedded in $M.$ Let $\overline{\gamma_n}$ be constructed in $N_\varepsilon(S_{1,1})$, a normal $\varepsilon$-neighbourhood of $S_{1,1},$ such that its corresponding diagram on $S_{1,1}$ is the alternating diagram in Figure \ref{alt2}.

\begin{figure}[h!]
\centering
\includegraphics[scale=0.6
] {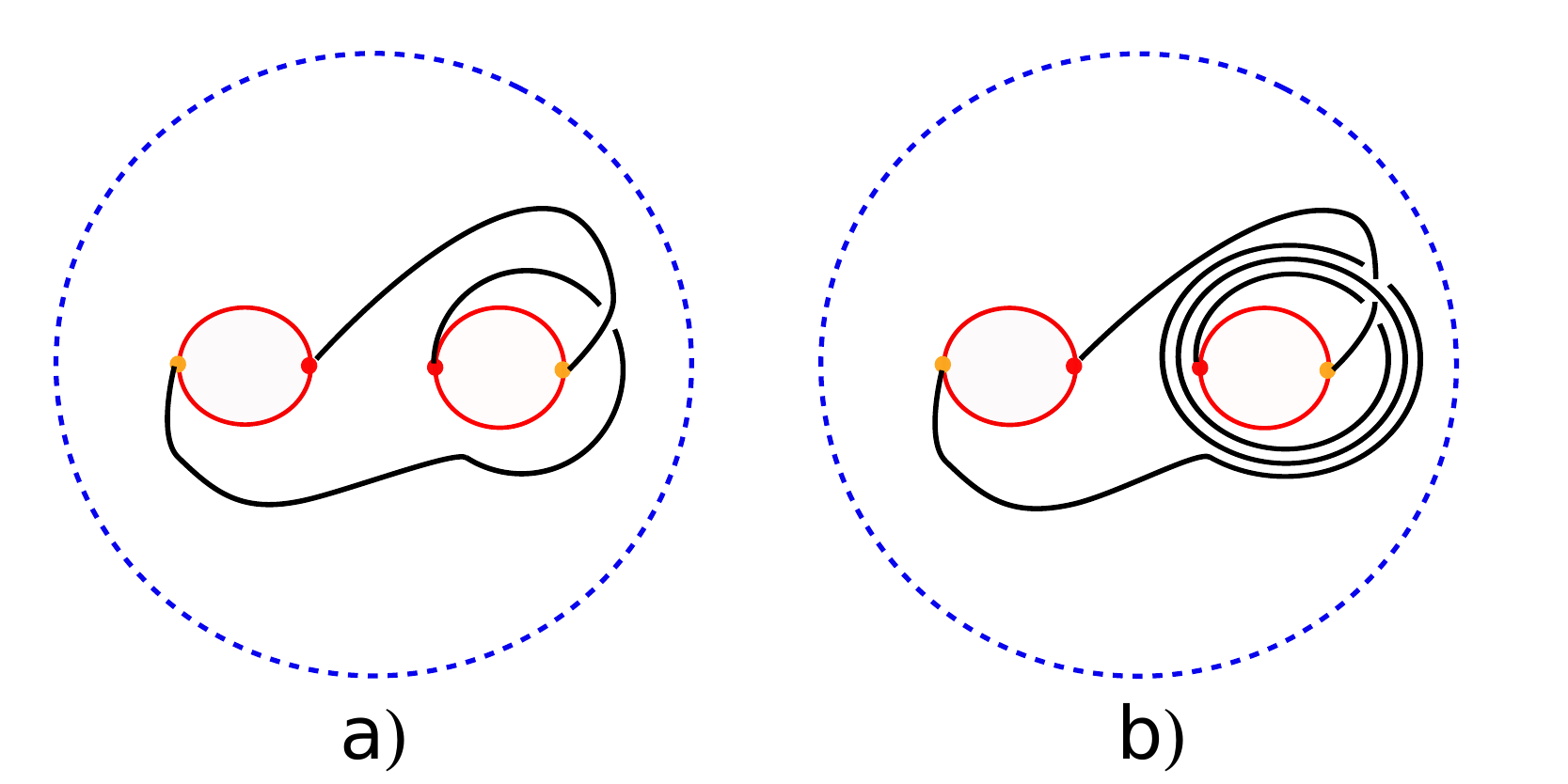}
\caption{An alternating diagram for a) $\gamma_1,$ and b)$\gamma_2.$ 
}\label{alt2}
\end{figure}
By making trivial Dehn filling around the torus coming from the puncture of $\Sigma_{1,1},$ the manifold $(M,S_{1,1})$ becomes $(\Sigma_1\times \mathbb S^1,S_1)$ and the position of our knot $\overline{\gamma_n}$ does not change. By using \cite[6.5.4]{Th1978}, \cite[6.5.2]{Th1978} and the fact that a solid torus $V$ has $\|V\|_0=0$ we get:
$$v_3 \|(\Sigma_1\times \mathbb S^1)_{\overline{\gamma_n}} \|_0 \leq v_3 \|(M_{\overline\gamma_n}) \|_0 =\Vol(M_{\overline{\gamma_n}})$$
where the last equality comes again from \cite[6.5.4]{Th1978}.

Notice that $(\Sigma_1\times \mathbb S^1)_{\overline{\gamma_n}}$ is not hyperbolic, however it contains an hyperbolic piece given by $N_\varepsilon(S_1)\setminus{\overline{\gamma_n}}$. Since, $N_\varepsilon(S_1)\setminus{\overline{\gamma_n}}$ is $(\Sigma_1\times \mathbb S^1)_{\overline{\gamma_n}}$ split along an essential torus by \cite[6.5.2]{Th1978}:
$$\Vol(N_\varepsilon(S_1)\setminus{\overline{\gamma_n}})= v_3 \|N_\varepsilon(S_1)\setminus{\overline{\gamma_n}}\|_0\leq v_3 \|(\Sigma_1\times \mathbb S^1)_{\overline{\gamma_n}} \|_0$$
Furthermore, the projection of $\overline{\gamma_n}$ has a weakly twist-reduced, weakly generalised alternating diagram on a generalised projection surface, see \cite[Sec.2]{HP2018}, $S_1$ in $N_\varepsilon(S_1)$.  Since, $N_\varepsilon(S_1)\setminus N_{ \frac{\varepsilon}{2}}(S_1)$ is atoroidal, and $\partial$-anannular\footnote{This means that it has no essential annulus whose boundary is not contained in the boundary components isotopic to the removed fiber.}, $N_\varepsilon(S_1)$ is boundary incompressible in $N_\varepsilon(S_1)\setminus N_{ \frac{\varepsilon}{2}}(S_1)$, and $\gamma_n$ is filling in $\Sigma_1,$ we can apply \cite[Thm.1.1]{HP2018}:
$$\frac{v_8}{2}(tw(\gamma_n)-\chi(\Sigma_1))\leq \Vol(N_\varepsilon(S_1)\setminus{\overline{\gamma_n}}),$$ 
where $tw$ is the number of twisting regions of the link diagram \cite[Def.6.4]{HP2018}.  In the case of closed geodesics in minimal positions we do not have bigons in its diagram. Therefore, $tw$ is equivalent to the self-intersection number of the corresponding geodesic.
\vskip .2cm

 To generalise this result to any hyperbolic surface $\Sigma_{g,n}$ notice that:

\begin{enumerate}

\item The number of connected components of the sequence of $\{\Sigma_{1,1}\setminus \gamma_k\}_{k\in\mathbb N}$ tends to infinity. Then, the previously constructed sequence has a sub-sequence $\{ \gamma_k\}$ such that $\Sigma_{1,1}\setminus \gamma_k$ has more than $n$ connected components. Then, by removing one puncture in $n$ simply connected components of $\Sigma_{1,1}\setminus \gamma_k$ we can think of $\set{\gamma_k}$ as in $\Sigma_{1,n}$.

\item  It is a straightforward  exercise  to  show that   any   projection of a link  on the $2$-sphere can be made alternating by changing crossings. Then any closed geodesic in $\Sigma_{0,n}$ admits an alternating diagram.

\end{enumerate}
Given $\alpha_1$ and $\alpha_2$ be a filling closed geodesics on $\Sigma_{g,1}$ and $\Sigma_{1,2}$ respectively. Let $\alpha_1\bar\star\alpha_2$ be the closed geodesic homotopic to a closed curve obtained by surgering $\alpha_1$ and $\alpha_2$ along a simple arc meeting transversely one boundary component in each surface, see \cite[Subsec. 4.2]{Rod17}. To prove the $\Sigma_{g,1}$ case with $g\geq2$ we can proceed by induction on the genus, using the following claim:

\vspace{0.3cm}

\textbf{Claim:} Let $\alpha_1$ and $\alpha_2$ be a filling closed geodesics admitting an alternating diagram on $\Sigma_{g,1}$ and $\Sigma_{1,2}$ respectively.  Then $\alpha_1\bar\star\alpha_2$  is filling and admits an alternating diagram on $\Sigma_{g+1,1}=\Sigma_{g,1}\cup_\partial \Sigma_{1,2}.$

\vspace{0.3cm}

\bpfc The filling property is proven in (\cite{Rod17}, Claim. 4.13) and the existence of an alternating diagram follows from fixing an alternating diagram on each $\alpha_1$ and $\alpha_2$.
\begin{figure}[h]
\centering
\includegraphics[scale=0.7
] {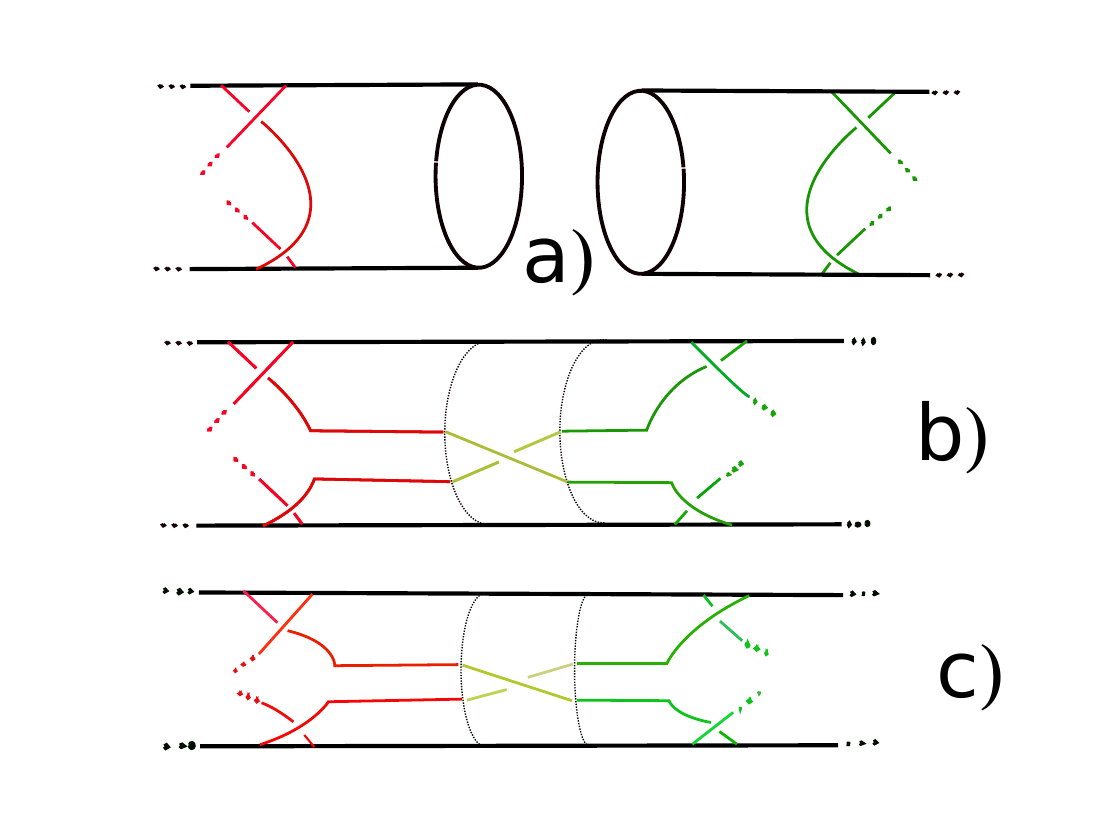}
\caption{a) $\alpha_1$ and $\alpha_2$ in a neighbourhood of the glued boundary component, b) The induced projection diagram of  $\alpha_1\bar\star\alpha_2$ around a  neighbourhood of the glued boundary component, c) Changing to the opposite crossing projection on one of the $\alpha_2$ subarcs (green)  to obtain an alternating diagram.}\label{alt4}
\end{figure}

 If after connecting both geodesics the corresponding diagram is not alternating (see Figure \ref{alt4}) then, by changing the crossing orientation of all crossings in one of the sub-arcs $\alpha_i$ making the diagram of the geodesic corresponding to $\alpha_1\bar\star\alpha_2$ alternating.\epfc
 
Finally to find the sequences of geodesics for general hyperbolic surface $\Sigma_{g,n}$ we use the analog argument used for the case of $\Sigma_{1,n}$ in $(1),$ so that we could add or remove punctures. \end{proof}

\begin{note}

Not every closed geodesic on a surface of genus greater or equal than $1$ admits an alternating diagram (see Figure \ref{alt3}.b). Even though, for each hyperbolic surface, one can find an infinite number of distinct types of closed geodesics which admit an alternating diagram, see Figure \ref{alt3}.c.
\end{note}

\begin{figure}[h]
\centering
\includegraphics[scale=0.5
] {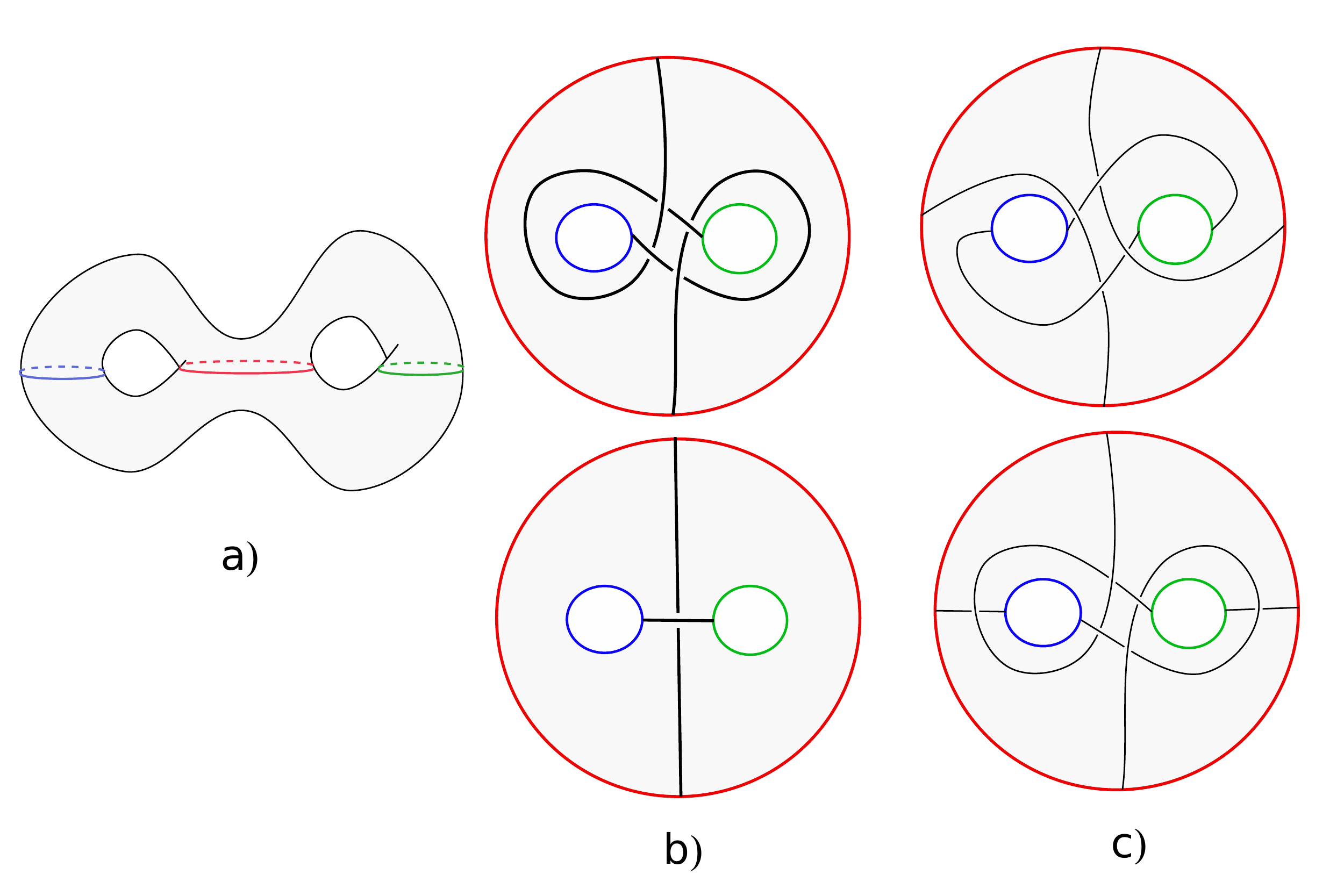}
\caption{a) Pants decomposition on $\Sigma_2,$  b) Closed geodesic not admitting an alternating diagram, c) Closed geodesic with an alternating diagram.}\label{alt3}
\end{figure}

We show now a general volume's upper bound for any lift complement on Seifert-fibered spaces over a filling geodesic multi-curve\string:

\begin{theorem}\label{ub}
Let $M$ be a Seifert-fibered space over a hyperbolic $2$-orbifold $\mathcal{O}$. Then, for any link $\bar\gamma\subset M$ projecting injectively to a filling geodesic multi-curve $\gamma$ on $\mathcal{O}$:
$$\Vol(M_{\overline{\gamma}})< 8v_3 i(\gamma,\gamma).$$
Where $v_3$ is the volume of the regular ideal tetrahedron and $i(\gamma,\gamma)$ the self-intersection number of $\gamma.$
 \end{theorem}

\begin{proof} 
The idea is to build a hyperbolic link $L_{\gamma}$ inside $M$ that reduces the \emph{complexity} of $\bar\gamma,$ in the sense that $M_{\bar\gamma}$ is obtained by performing Dehn filling along some components of $L_{\gamma}$. Since Dehn filling does not increase the volume \cite[ Theorem 6.5.6]{Th1978} and the fact that the number of tetrahedra in any ideal tetrahedra decomposition of a finite volume hyperbolic manifold is an upper bound for its volume \cite[Theorem 6.1.7]{Th1978}, we have that:
$$\Vol(M_{\bar\gamma})\leq  \Vol( M\setminus{L_{\gamma}} ) \leq v_3 \sharp\mathcal{T}_{L_\gamma},$$
where $\mathcal{T}_{L_\gamma}$ is a decomposition of $M\setminus{L_{\gamma}}$  into ideal tetrahedra. That is, the vertices corresponds to the cusps of $M\setminus L_\gamma$. After constructing the link $L_\gamma$, we will argue that there exist $\mathcal{T}_{L_\gamma}$ with the number of tetrahedra comparable to the self-intersection number of $\gamma.$ 

Let $\mathcal F$  be the collection of fibres of $M$ projecting under $p$ to conical points of $\mathcal O$. For every simply connected region $D$ of $\mathcal{O}\setminus\gamma$ not containing a conical point we pick a regular fibre $F_D$ whose projection lies in $D$ and call this collection of fibres $\mathcal D$. Let us denote by $N$ the Seifert-fibered space obtained by removing $\mathcal F\cup\mathcal D$ from $M$. Since $N$ has no singular fibres let $\Sigma$ be the Seifert surface of $N$. Note that, $\Sigma$ is homeomorphic to $\mathcal{O}$ minus the set of conical points and minus one point for each simply connected component of $\mathcal{O}\setminus\gamma.$  Then, we define $L_\gamma\eqdef \bar\gamma \cup\mathcal F\cup\mathcal D$. By construction $N_{\bar\gamma}=M\setminus{L_{\gamma}},$ and by Theorem \ref{sf} it admits a finite volume hyperbolic structure.

To give a decomposition of $N_{\bar\gamma}$  into ideal tetrahedra, we start by taking a pair of ideal vertices in each fibre that projects to a self-intersection points of $\gamma$ such that they connect the two points on $\bar\gamma$ (see Figure \ref{fibertria}). Moreover, let $G_\gamma$ be the $4$-valent graph induced by $\gamma$ on $\Sigma$ and let $A_\alpha\cong \mathbb S^1\times I$, for $\alpha$ an edge of $G_\gamma$ be the pre-image under $p\vert_N$. 

 \vspace{0.3cm}
 \begin{figure}[h]
\centering
\includegraphics[scale=0.5] {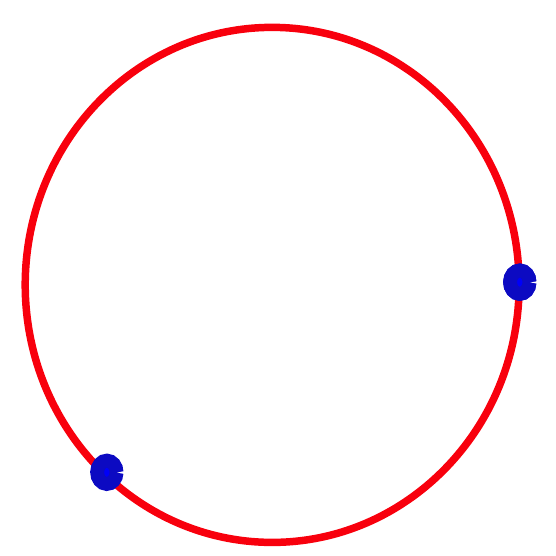}
\caption{Triangulation of the fibres coming from self-intersection points of $\gamma.$
}\label{fibertria}
\end{figure}

We extend this graph to an ideal triangulation of the CW-complex $p\vert_{N}^{-1}(\gamma)=\cup_{\alpha\in E(G_\gamma)}A_\alpha$ by  triangulating each annulus $A_\alpha$. We do this by adding an ideal edge, which is an embedded arc connecting the other vertices in each boundary fibre that do not intersect the embedded $\bar\gamma$-arc in the annulus (up to isotopy this arc is unique) and then collapsing the $\bar\gamma$-arc in the annulus to a point. This induces an ideal triangular decomposition of each annulus by two ideal triangles (see Figure \ref{arctria}).
 \vspace{0.3cm}
 \begin{figure}[h]
\centering
\includegraphics[scale=0.5] {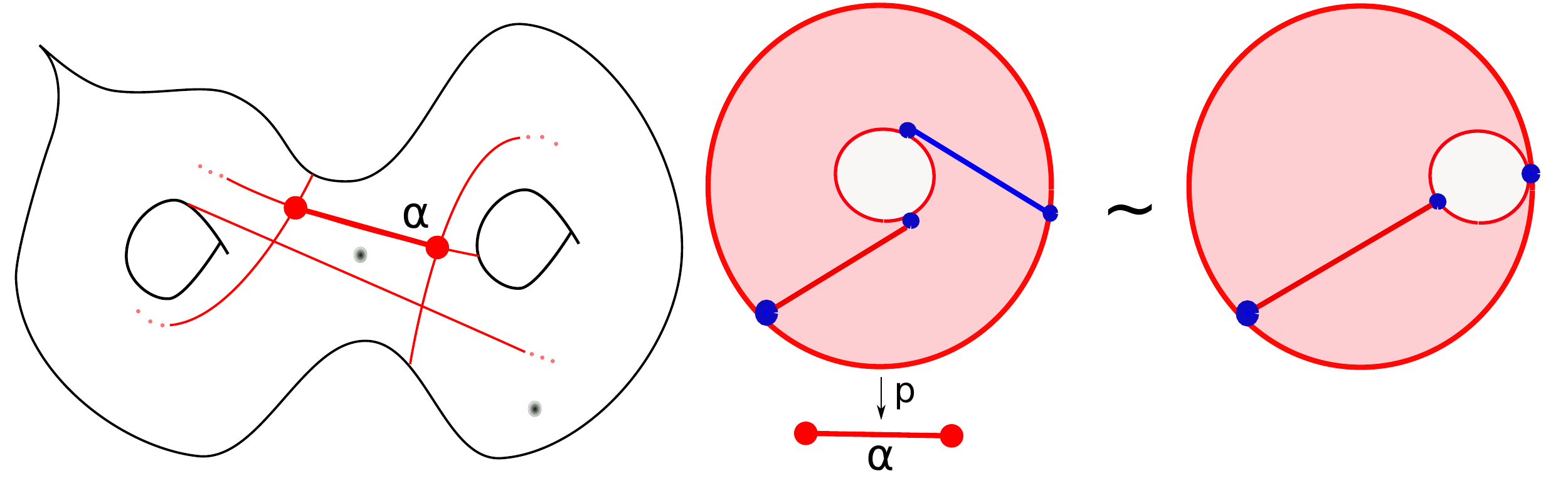}
\caption{Triangular ideal decomposition of $p_{|N}^{-1}(\alpha)$ where the $\bar\gamma$-arc is in blue.
}\label{arctria}
\end{figure}

Let $H\subset N$ be a regular neighbourhood of the triangulated CW-complex $p^{-1}\vert_N(\gamma)$. Then, $H$ has a natural prism-decomposition induced by the ideal triangulation of $p^{-1}\vert_N(\gamma),$ where some vertices correspond to $\bar\gamma.$ Since $\gamma$ fills  $\Sigma$ and we added a puncture in every complementary disk region we have that $N$ is homeomorphic to the interior of $H.$ Moreover, the prism-decomposition of $H$ induces a triangulation of $\partial H,$ which are tori corresponding to fibres of punctures of $\Sigma.$ Therefore, by collapsing the boundary components of $H$ to an ideal vertex we obtain an ideal triangulation of $N_{\bar\gamma},$ because the ideal vertices of our ideal triangulation project precisely to the cusps of $N_{\bar\gamma}.$

Finally, the number of ideal tetrahedra used in this triangulation is four times the number of edges in the graph associated to $\gamma.$ The number of edges is at most two times the self-intersection number of $\gamma$. Hence, we have at most eight ideal tetrahedra for each self-intersection point of  $\gamma.$
\end{proof} 

As a corollary of Theorems \ref{lbsi} and \ref{ub}, and the fact that Seifert-fibered spaces over punctured surfaces are homeomorphic to trivial circle bundles we obtain:

 \begin{repcorollary}{how-pur}
Let $\Sigma_{g,n}$ be an $n$-punctured hyperbolic surface, $n\geq 1$, then there exists a sequence of $\{\gamma_n\}_{n\in\mathbb N}$ filling closed geodesics with $i(\gamma_n,\gamma_n)\nearrow \infty,$ and respective lifts $\{\overline{\gamma_n}\}_{n\in\mathbb N}$ in  $PT^1(\Sigma_{g,n})$ such that,
$$\frac{v_8}{2}(i(\gamma_n,\gamma_n)-(2-2g))\leq\Vol(M_{\overline{\gamma_n}})< 8v_3 i(\gamma_n,\gamma_n),$$
where $v_3$ $(v_8)$ is the volume of the regular ideal tetrahedron (octahedron) and $i(\gamma_n,\gamma_n)$ the self-intersection number of $\gamma_n.$
 \end{repcorollary}

Similarly to \cite{Rod17}, given any geodesic multi-curve $\gamma$ and any continuous lift $\bar\gamma,$ one has a combinatorial lower bound for the volume of $M_{\bar\gamma}.$ Recall that a pants decomposition on an orbifold $\mathcal O,$  is a maximal family of disjoint simple closed geodesics on the underlying topological surface $\Sigma_\mathcal O$ which do not intersect  the singular points of $\mathcal O.$ We will show:

\begin{reptheorem}{1}
Given a pants decomposition $\Pi$ on a hyperbolic $2$-orbifold $\mathcal O$, a Seifert-fibered space $M$ over $\mathcal O,$ and a filling geodesic multi-curve $\gamma$ on $\mathcal O,$ for any closed continuous lift $\bar\gamma$ we have that\string: 

$$  \frac{v_3}{2}\sum_{P \in \Pi}(\sharp\{\mbox{isotopy classes of} \hspace{.2cm}  \bar\gamma\mbox{-arcs in} \hspace{.2cm} p^{-1}(P)\}-3)\leq\Vol(M_{\bar\gamma}),$$

where $v_3$ the volume of the regular ideal tetrahedron.
 \end{reptheorem}

Given a pair of pants $P,$ we say that two arcs $\bar\alpha,\bar\beta:[0,1] \rightarrow p^{-1}(P)$ with $\bar\alpha(\{0,1\})\cup \bar\beta(\{0,1\})\subset \partial( p^{-1}(P))$ are in the same isotopy class in $p^{-1}(P),$ if there exist an isotopy $\fun{h}{[0,1]_1\times[0,1]_2}{p^{-1}(P)}$ such that\string: $$h_0(t_2)=\bar\alpha(t_2),  \hspace{.2cm} h_1(t_2)=\bar\beta(t_2) \hspace{.2cm} \mbox{and} \hspace{.2cm}h([0,1]_1\times\{0,1\})\subset \partial( p^{-1}(P)).$$

 \begin{note}\label{bda} 
Up to isotopy, for a family of simple arcs without intersection there are only six configurations of arcs in $P$. These are shown in Figure \ref{badarcs}. The $3$ in the lower bound of Theorem \ref{1} comes from the fact that there are at most $3$ isotopy classes of $\bar\gamma$-arcs on $p^{-1}(P)$ projecting to such a configuration.
 \begin{figure}[h]
\centering
\includegraphics[scale=0.4] {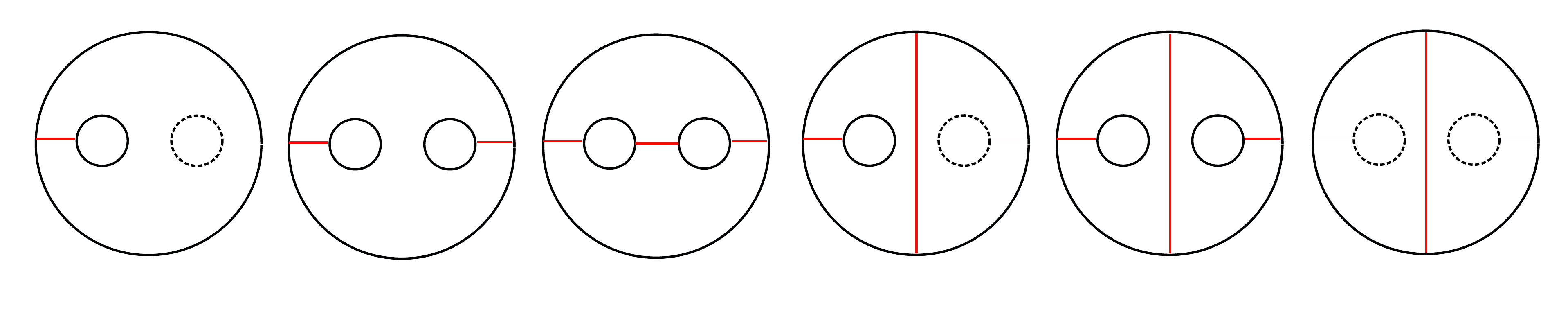}
\caption{ The projection on $P$ of the only six $\bar\gamma$-arcs configuration, up to isotopy, whose $\bar\gamma$-{arcs} project to pairwise disjoint simple arcs in $P.$ }\label{badarcs}
\end{figure} 
\end{note}

Before stating the main result to prove Theorem \ref{1}  we recall some definitions.
\vskip .2cm

If $N$ is a hyperbolic $3$-manifold and $S \subset N$ is an embedded incompressible surface, we will use $N\vert  S$ to denote the manifold  obtained from $N$ by cutting along $S$. The manifold $N\vert S$ is homeomorphic to the complement in $N$ of an open regular neighbourhood of $S.$ If one takes two copies of $N\vert S,$ and glues them along their boundary by using the identity diffeomorphism, one obtains the double of $N\vert S,$ which we denote by $D(N\vert S).$ 

\begin{definition}\label{DP}    Let $p:N\rightarrow \mathcal O$ be a Seifert-fibered space.
Let $P$ be a pair of pants belonging to a pant decomposition of a orbifold $\mathcal O$ and let $\gamma$ be a closed geodesic in $\mathcal O$ that is not isotopic into $P$. Moreover, assume that $P\cap\gamma$ is a finite set of geodesic arcs $\{\alpha_i\}_{i=1}^{n_P}$ connecting boundary components of $P.$ We define $P_{\bar\gamma}$ to be the set:

$$ p^{-1}(P)\setminus\bigcup_{i=1}^{n_P} \bar\alpha_i.$$

We also define $D({P}_{\bar\gamma}),$ as the gluing, via the identity homeomorphism, of two copies of $P_{\bar\gamma}$  along the punctured tori coming from:

$$\partial( p^{-1}(P))\setminus\bigcup_{i=1}^{n_P}  \bar\alpha_i.$$

Moreover, $D({P}_{\bar\gamma})$ is a link complement in the Seifert-fibered space $D(p^{-1}(P)),$ described as :

$$D(p^{-1}(P))\setminus \bigcup_{i=1}^{n_P}  D(\bar\alpha_i),$$

 where the projection orbifold of $D(p^{-1}(P)),$ whose underlying surface will be denoted by $ S^0,$ is one of the following:
 \begin{enumerate}
 \item either a genus two surface (if $\sharp(\partial(\Sigma_\mathcal O) \cap \partial P)=0)$;
 \item a surface of type $(1,2)$\footnote{By a surface of type $(n,m)$ we mean a genus $n$ surface with $m$ punctures.} (if $\sharp(\partial(\Sigma_\mathcal O)\cap \partial P)=1)$;
 \item a surface of type $(0,4)$ (if $\sharp(\partial(\Sigma_\mathcal O)\cap \partial P)=2).$ 
 \end{enumerate}
 
 Each $D(\bar\alpha_i)$ is a knot in $D(p^{-1}(P))$  obtained by gluing $\bar\alpha_i$ along the two points $\partial ( p^{-1}(P))\cap\bar\alpha_i$ via the identity.  See Figure \ref{piste}.
\end{definition}

\begin{figure}[h]
\includegraphics[scale=0.5] {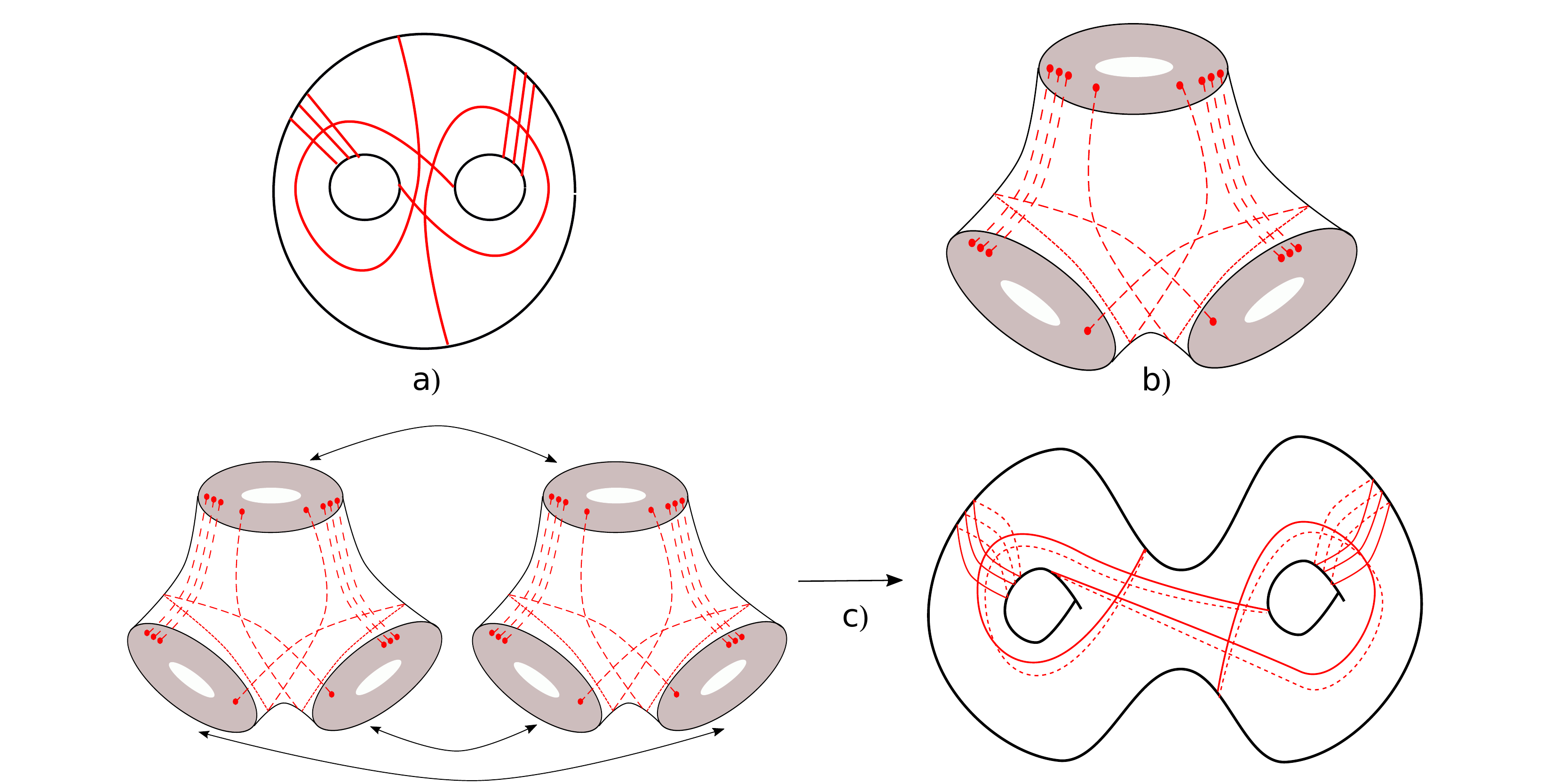}
\caption{$a)$ A pair of pants and a set of geodesic arcs connecting the boundary. $b)$  $P_{\bar\gamma}$ associated to $a),$ and  $c)$  $D({P}_{\overline{\gamma}})$ with the induced projection to $ S^0.$
}\label{piste}
\end{figure}

The key ingredient to prove Theorem \ref{1} is the following result due to Agol, Storm and Thurston, see \cite[ Theorem 9.1]{AST07}:

 \begin{theorem*}[Agol-Storm-Thurston]
 Let $N$ be a compact manifold with interior a hyperbolic $3$-manifold of finite volume. Let $S$ be a properly embedded incompressible surface in $N$, then:
 $$  \frac{v_3}{2} \| D(N\vert S)\|\leq\Vol(N)$$
  \end{theorem*}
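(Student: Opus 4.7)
The plan is to apply the Agol--Storm--Thurston inequality to a natural incompressible surface cut out by the pants decomposition, and then to bound the Gromov norm of each doubled piece from below by generalizing the argument of \cite[Theorem~1.5]{Rod17}.

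First, I would take $S \subset M_{\bar\gamma}$ to be the disjoint union of punctured vertical tori $\bigsqcup_{c\in\Pi}\bigl(p^{-1}(c)\setminus\bar\gamma\bigr)$. Each $p^{-1}(c)$ is an embedded incompressible torus in $M$ because $c$ is an essential simple closed curve in the orbifold $\mathcal O$ disjoint from the conical locus; a compressing disk would project to a null-homotopy of $c$ in $\mathcal O$, contradicting essentiality. Since $\bar\gamma$ projects transversally to the pants curves, $S$ is a properly embedded incompressible subsurface of $M_{\bar\gamma}$, and the Agol--Storm--Thurston theorem stated above gives
\[
\frac{v_3}{2}\,\|D(M_{\bar\gamma}|S)\|\leq\Vol(M_{\bar\gamma}).
\]

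Second, by construction (Definition~\ref{DP}) cutting $M_{\bar\gamma}$ along $S$ yields the disjoint union $\bigsqcup_{P\in\Pi}P_{\bar\gamma}$, so $D(M_{\bar\gamma}|S)=\bigsqcup_{P\in\Pi}D(P_{\bar\gamma})$, and additivity of the Gromov norm under disjoint union gives
\[
\|D(M_{\bar\gamma}|S)\|=\sum_{P\in\Pi}\|D(P_{\bar\gamma})\|.
\]
Setting $n_P\eqdef\sharp\{\text{isotopy classes of }\bar\gamma\text{-arcs in }p^{-1}(P)\}$, the theorem thus reduces to the local estimate $\|D(P_{\bar\gamma})\|\geq n_P-3$ for every $P\in\Pi$.

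Third, for the local estimate I would argue as follows. When $n_P\leq 3$ the inequality is vacuous. When $n_P>3$, Remark~\ref{bda} identifies at most three isotopy classes of arcs in $p^{-1}(P)$ whose projections realize the six degenerate configurations of Figure~\ref{badarcs}; removing one representative of each such class yields a sub-link $\bar\gamma'\subset\bar\gamma$ whose projection fills $P$ and whose doubled arcs fill the base surface $S^0$ of $D(p^{-1}(P))$. By Theorem~\ref{sf} applied to $D(p^{-1}(P))$ with the hyperbolic orbifold whose underlying surface is $S^0$, the manifold $D(P_{\bar\gamma'})$ is finite-volume hyperbolic. Since $D(P_{\bar\gamma})$ is obtained from $D(P_{\bar\gamma'})$ by further drilling, $\|D(P_{\bar\gamma})\|\geq\|D(P_{\bar\gamma'})\|$. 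Inside $D(P_{\bar\gamma'})$ I would then construct $n_P-3$ pairwise disjoint essential thrice-punctured spheres, one for each surviving arc class, by doubling a bigon in $p^{-1}(P)$ cobounded by the arc and a subarc of $\partial(p^{-1}(P))$; by Adams' theorem each is isotopic to a totally geodesic surface, and the argument of \cite[Sec.~5]{Rod17} then yields a contribution of at least $1$ to the Gromov norm per such surface.

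The main obstacle will be the last step: constructing the $n_P-3$ disjoint essential thrice-punctured spheres in the genuine Seifert-fibered setting. In \cite{Rod17} the underlying bundle $PT^1(\Sigma)$ is locally a product over each pants curve, so the surfaces can be exhibited explicitly; when $M$ has non-trivial vertical monodromy or the base orbifold $\mathcal O$ has conical points bordering $P$, these explicit surfaces must be modified by choosing suitable parallel copies of the doubled arcs and adjusting vertical framings so that the resulting surfaces remain embedded, incompressible and pairwise disjoint. Modulo this geometric construction, the rest of the argument is a formal application of the Agol--Storm--Thurston machinery combined with additivity of the Gromov norm.
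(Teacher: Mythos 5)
Your proposal does not prove the statement at hand: it assumes it. The statement to be established is the Agol--Storm--Thurston inequality itself, namely that for \emph{any} compact $N$ whose interior is a finite-volume hyperbolic $3$-manifold and \emph{any} properly embedded incompressible surface $S\subset N$ one has $\frac{v_3}{2}\|D(N\vert S)\|\leq \Vol(N)$. In your second step you write that ``the Agol--Storm--Thurston theorem stated above gives'' $\frac{v_3}{2}\|D(M_{\bar\gamma}\vert S)\|\leq\Vol(M_{\bar\gamma})$ --- at that moment you have invoked, as a black box, exactly the inequality you were asked to prove. Everything that follows (cutting along the vertical punctured tori over the pants curves, additivity of the Gromov norm over the pieces $D(P_{\bar\gamma})$, and the local estimate against the degenerate configurations of Remark~\ref{bda}) is a sketch of Theorem~\ref{1} of the paper, not of the displayed statement; indeed it reproduces, nearly step for step, the paper's own proof of Theorem~\ref{1}, which takes the displayed inequality as its key external ingredient.

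For the statement itself the paper offers no proof at all: it is quoted verbatim from \cite[Theorem 9.1]{AST07}. And no pants-decomposition argument could prove it, since the statement concerns an arbitrary incompressible surface in an arbitrary finite-volume hyperbolic manifold, with no Seifert-fibered structure, multi-curve, or pants decomposition anywhere in its hypotheses. A genuine proof requires entirely different machinery: one replaces $S$ by a least-area minimal representative of its isotopy class, cuts $N$ along it and doubles, observes that the resulting (non-smooth) metric on $D(N\vert S)$ has scalar curvature bounded below by $-6$ in a suitable weak sense and volume $2\Vol(N)$, and then applies the main volume estimate of \cite{AST07} --- which rests on Perelman's work on Ricci flow with surgery --- to bound the volume of any such metric from below by $v_3\|D(N\vert S)\|$. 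None of these ideas appear in your proposal. As an outline of Theorem~\ref{1} your text is close to the paper's argument (though your final step, building disjoint thrice-punctured spheres, differs from the paper's counting of cusps of $D(P_{\bar\gamma})^{hyp}$ via the JSJ decomposition); as a proof of the stated theorem it is circular and the gap cannot be repaired within the approach you describe.
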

 
 We now prove the lower bound for the volume of the canonical lift complement\string:
 
 \begin{proof}[\bf{Proof of Theorem \ref{1}}]
Let $\{\eta_i\}^{3g+n-3}_{i=1}$ be the simple closed geodesics inducing the pants decomposition $\Pi.$ Consider the  incompressible surface  $S\eqdef\bigsqcup^{3g+n-3}_{i=1} {(T_{\eta_i})_{\bar\gamma}}$ in $M_{\bar\gamma},$ where $(T_{\eta_i})_{\bar\gamma}$ is the incompressible punctured torus defined by $ p^{-1}(\eta_i))\setminus (p^{-1}(\eta_i)\cap\bar\gamma)$, see \cite[Lemma 2.5]{Rod17}. From \cite[Theorem 9.1]{AST07} we deduce that\string:
$$  \frac{v_3}{2}\sum_{P \in \Pi} \| D({P}_{\bar\gamma})\|=\frac{v_3}{2} \| D(M_{\bar\gamma}\vert S)\|\leq\Vol(M_{\bar\gamma})$$
For each pair of pants $P$ we have\string:
$$v_3\sharp\{\mbox{cusps of}  \hspace{.2cm} D({P}_{\bar\gamma})^{hyp}\} \leq  \Vol( D({P}_{\bar\gamma})^{hyp})\leq v_3\| D({P}_{\bar\gamma})^{hyp}\|  = v_3\| D({P}_{\bar\gamma})\|$$
 where $D({P}_{\bar\gamma})^{hyp}$ is the atoroidal piece of $D({P}_{\bar\gamma}),$   i.e., the complement of the characteristic sub-manifold, with respect to its JSJ-decomposition. The first and second inequality come from \cite{Ada88}  and \cite{Gro82} respectively. 
 \vskip .2cm
 Let $\Omega$ be the subset of $\gamma$-arcs on $P$ having one arc for each isotopy class of $\bar\gamma$-arcs on $p^{-1}(P)$. This means that $D({P}_{\bar\gamma})^{hyp}\cong D({P}_{\overline{\Omega}})^{hyp}.$ Moreover, $D({P}_{\overline{ \Omega}})$ can be seen as a link complement in $D(p^{-1}(P))$, see Definition \ref{DP}, whose projection to $S^0$ is a union of closed loops transversally homotopic to a union closed loops in minimal position. The atoroidal piece of $D({P}_{\overline{\Omega}})$ corresponds to the subsurface of $S^0$ which $D(\Omega)$ fills (Theorem \ref{sf}).

  \begin{enumerate}
   \item If the $\Omega$-arc configuration on $P$ is in the list of Remark \ref{bda}, then by Theorem \ref{sf} we have that $D({P}_{\bar\gamma})^{hyp}=\emptyset$  and Remark \ref{bda} also gives us:
 $$\frac{v_3}{2}(\sharp\{\mbox{isotopy classes of} \hspace{.2cm}  \bar\gamma\mbox{-arcs in} \hspace{.2cm} p^{-1}(P)\}-3)\leq v_3\sharp\{\mbox{cusps of}  \hspace{.2cm} D({P}_{\bar\gamma})^{hyp}\}.$$
 \item If the $\Omega$-arc configuration on $P$ is not in the list of Remark \ref{bda}, then there is at least one geometric intersection point on the projection of the link complement $D({P}_{\overline{ \Omega}})$  to $S^0.$  \end{enumerate} 
\vskip .2cm

\begin{figure}[h]
\centering
\includegraphics[scale=0.4] {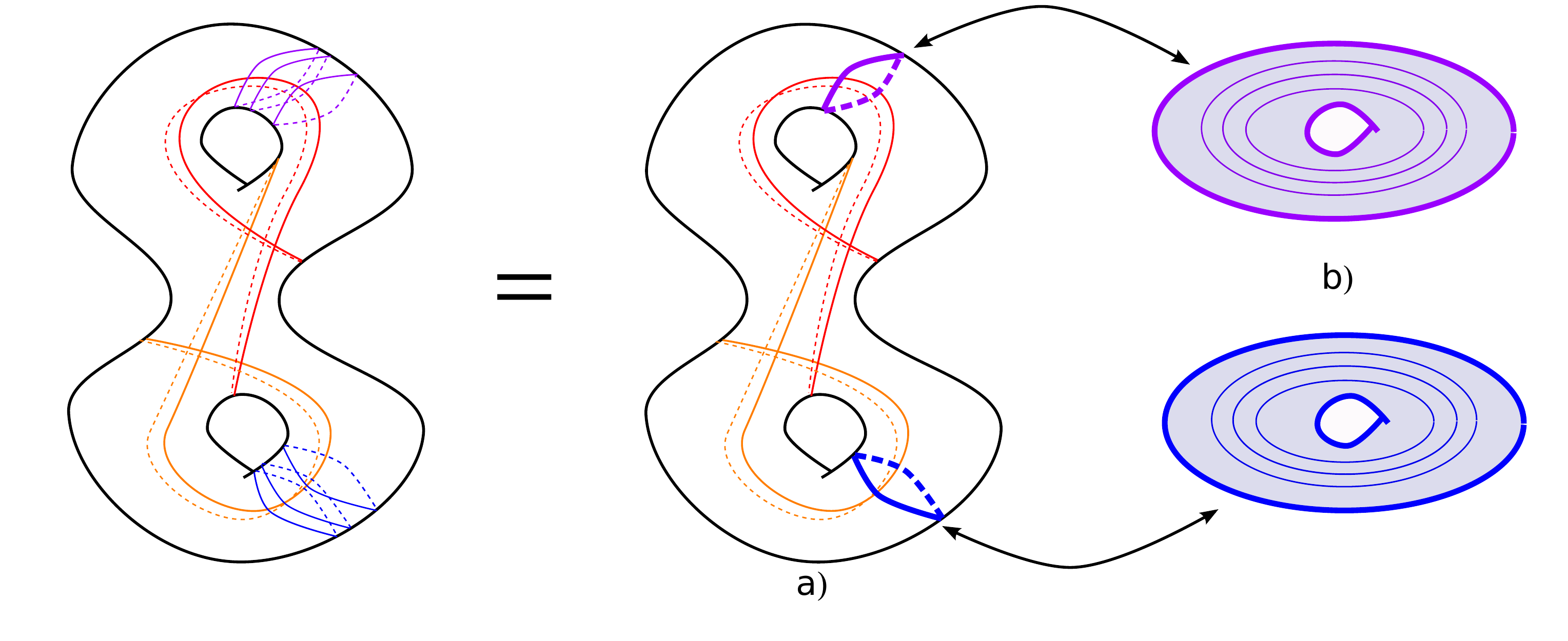}
\caption{The JSJ-decomposition of $D({P}_{\overline{\gamma}})$ of Figure \ref{piste}.c.
}\label{DPgJSJ}
\end{figure}
By Theorem \ref{sf} we conclude that $D({P}_{\bar\gamma})^{hyp}\neq\emptyset.$ We will now define an injective function: 
$$\left\{ \begin{array}{l} \bar\gamma\mbox{-arcs in} \\ \hspace{.5cm} p^{-1}(P)\end{array}\right\} \overset{\varphi}\longrightarrow \left\{ \begin{array}{l}\hspace{.1cm} \mbox{cusps of} \\ D({P}_{\bar\gamma})^{hyp}\end{array}\right\}$$
where the target can be decomposed as:
$$\left\{ \begin{array}{l}\hspace{.1cm} \mbox{cusps of} \\ D({P}_{\bar\gamma})^{hyp}\end{array}\right\}=\left\{ \begin{array}{l}\hspace{.3cm}\mbox{splitting tori of the} \\ \mbox{JSJ-decomposition of}  \\ \hspace{1.3cm} D({P}_{\bar\gamma})\end{array}\right\}\amalg\left\{ \begin{array}{l}\hspace{.8cm}\mbox{cusp in}  \\   D({P}_{\bar\gamma}) \cap  D({P}_{\bar\gamma})^{hyp}\end{array}\right\}$$ 
The function $\varphi$ is defined as follows: if the cusps in $D({P}_{\bar\gamma})$ are induced by the $\bar\gamma$-arc in $p^{-1}(P)$ belonging to the characteristic sub-manifold of $D({P}_{\bar\gamma}),$  $\varphi$ maps it to a splitting tori connecting the hyperbolic piece with the component of the characteristic sub-manifold where it is contained. Otherwise, the cusp belongs to $D({P}_{\bar\gamma})^{hyp}$ and $\varphi$ sends it to itself, see Figure \ref{DPgJSJ}. 
Assume that there are more isotopy classes of $\bar\gamma$-arcs in $p^{-1}(P)$ than the number of cusps of $D({P}_{\bar\gamma})^{hyp}$. Then, there are two tori, associated with non-isotopic $\bar\gamma$-arcs in $p^{-1}(P),$ that belong to the same connected component of the characteristic sub-manifold. Since each component of the characteristic sub-manifold is a Seifert-fibered space over a punctured surface we have that all such arcs correspond to regular fibres. Thus, they are isotopic in the corresponding component hence isotopic in $p^{-1}(P),$ contradicting the fact that they were not isotopic.    \end{proof}
This result implies that there exist a filling geodesic multi-curve $\gamma$ on $\mathcal O$ with bounded components such that $\Vol(M_{\bar\gamma})$ can be as large as we want. Let us fix a pants decomposition on $\mathcal O,$ then for any $N\in\mathbb{N}$ there exist a closed geodesic with at least $N$ homotopy classes of geodesic arcs in one pair of pants. This is constructed by taking $N$ non-homotopic geodesic arcs in a pair of pants and linking them to form a filling geodesic multi-curve on $\mathcal O.$
\vskip .2cm
The lower  bound of the volume of  $M_{\bar\gamma}$ obtained in Theorem \ref{1} does not have control on the length of the geodesic multi-curve, even if each homotopy class of $\gamma$-arcs contributes to the length of $\gamma.$ 

\begin{question}
Given a hyperbolic orbifold, estimate the volume of $M_{\bar\gamma}$ among the filling geodesic multi-curve $\gamma$ whose length  is bounded by a fixed constant. 
\end{question}

\nocite{BP1992,CEM2006,Th1978,MT1998}

\thispagestyle{empty}
{\small
\markboth{References}{References}
\bibliographystyle{alpha} 
\bibliography{mybib}{}

}
	\bigskip

\noindent Department of Mathematics, Boston College.

\noindent 140 Commonwealth Avenue Chestnut Hill, MA 02467.

\noindent Maloney Hall
\newline \noindent
email: \texttt{cremasch@bc.edu}
\bigskip

\noindent Department of Mathematics and Statistics,  University of Helsinki.  
\newline\noindent Pietari Kalminkatu 5,  Helsinki FI 00014.
\newline \noindent 
 email: \texttt{jose.rodriguezmigueles@helsinki.fi}

									\end{document}